\theoremstyle{plain} 
 \newtheorem{thm}{Theorem}[section]
 \newtheorem{lem}[thm]{Lemma}
 \newtheorem{claim}[thm]{Claim}
\theoremstyle{definition}
\theoremstyle{remark}
  \newtheorem{ex}[thm]{Example}
\newcommand{\R}{\mathbb{R}}
\newcommand{\cal}{\mathcal}
\newcommand{\N}{\mathbb{N}}
\newcommand{\Z}{\mathbb{Z}}
\newcommand{\calr}{\mathcal{R}}
\newcommand{\calb}{\mathcal{B}}
\newcommand{\calg}{\mathcal{G}}
\newcommand{\calh}{\mathscr{H}}
\newcommand{\calk}{\mathscr{K}}
\newcommand{\cala}{\mathscr{A}}
\newcommand{\calbb}{\mathscr{B}}
\renewcommand{\c}{\curvearrowright}
\begin{document}

\title[Stable actions of central extensions and relative property (T)]{Stable actions of central extensions \\and relative property (T)}
\author{Yoshikata Kida}
\address{Department of Mathematics, Kyoto University, 606-8502 Kyoto, Japan}
\email{kida@math.kyoto-u.ac.jp}
\date{December 18, 2013}
\subjclass[2010]{37A20}
\keywords{Discrete measured equivalence relations, discrete measured groupoids, stability, central extensions, relative property (T)}

\begin{abstract}
Let us say that a discrete countable group is stable if it has an ergodic, free, probability-measure-preserving and stable action.
Let $G$ be a discrete countable group with a central subgroup $C$.
We present a sufficient condition and a necessary condition for $G$ to be stable.
We show that if the pair $(G, C)$ does not have property (T), then $G$ is stable.
We also show that if the pair $(G, C)$ has property (T) and $G$ is stable, then the quotient group $G/C$ is stable.
\end{abstract}

\maketitle


\section{Introduction}

We mean by a {\it p.m.p.}\ action of a discrete countable group $G$ a measure-preserving action of $G$ on a standard probability space, where ``p.m.p." stands for ``probability-measure-preserving".
An ergodic, free and p.m.p.\ action is called {\it stable} if the associated equivalence relation is isomorphic to its direct product with the ergodic hyperfinite equivalence relation of type ${\rm II}_1$.
Let us say that a discrete countable group $G$ is {\it stable} if $G$ has an ergodic, free, p.m.p.\ and stable action.
A fundamental question of our interests is which group is stable.
A necessary condition for stability of a group is obtained by Jones-Schmidt, who show that any stable group is inner amenable (\cite[Proposition 4.1]{js}).
Recall that a discrete countable group $G$ is called {\it inner amenable} if there exists a sequence of non-negative $\ell^1$-functions on $G$ with unit norm, $(f_n)_{n=1}^\infty$, such that for any $g\in G$, we have
\[\lim_{n\to \infty}f_n(g)=0\quad \textrm{and}\quad \lim_{n\to \infty}\sum_{h\in G}|f_n(g^{-1}hg)-f_n(h)|=0.\]
This property was introduced by Effros \cite{effros} in connection with property Gamma of a group factor (see \cite{bh} for a survey, and \cite{csu}, \cite{choda} and \cite{vaes} for related works).
Jones-Schmidt asked whether inner amenability is sufficient for stability (\cite[Problem 4.2]{js}).
A counterexample to this question was pointed out by the author \cite{kida-inn}.
In fact, any group having property (T) and having the infinite center is inner amenable, but not stable (see \cite[Example 1.7.13]{bhv} for such groups).
The Baumslag-Solitar groups are an interesting example of inner amenable groups (\cite{stalder}), and are shown to be stable (\cite{kida-stab}).
We refer the reader to \cite[Section 9]{kec} for other results related to stability and inner amenability.

This paper focuses on groups with infinite center.
These groups are a typical example of inner amenable groups.
The following theorem gives a sufficient condition and a necessary condition for such groups to be stable.

\begin{thm}\label{thm-stab}
Let $1\to C\to G\to \Gamma \to 1$ be an exact sequence of discrete countable groups such that $C$ is central in $G$.
Then the following assertions hold:
\begin{enumerate}
\item If the pair $(G, C)$ does not have property (T), then $G$ is stable.
\item If the pair $(G, C)$ has property (T) and $G$ is stable, then $\Gamma$ is stable.
\end{enumerate}
\end{thm}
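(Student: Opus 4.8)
The overall plan is to measure stability through the Jones--Schmidt criterion (\cite{js}): a free ergodic p.m.p.\ action is stable exactly when the full group of its orbit equivalence relation $\mathcal{R}$ admits a nontrivial asymptotically central sequence. The bridge to the central subgroup is the following elementary but decisive observation. If $G\c (X,\mu)$ is free and $(w_n)$ is a sequence of Borel maps $w_n\colon X\to C$, then $u_n(x):=w_n(x)\cdot x$ defines elements of the full group $[\mathcal{R}]$, and because $C$ is central a direct cocycle computation shows that the $u_n$ are asymptotically central in $[\mathcal{R}]$ when, and essentially only when, the $w_n$ are asymptotically invariant along $\mathcal{R}$, while they are nontrivial precisely when the $w_n$ do not concentrate at the identity of $C$. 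Thus stability of a $G$-action is governed by the existence of nontrivial asymptotically invariant $C$-valued sequences, and relative property (T) of $(G,C)$---equivalently, via Fourier analysis on the abelian group $C$ and its dual, the statement that every unitary representation of $G$ with almost invariant vectors has a nonzero $C$-invariant vector---is exactly the obstruction to such sequences.

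For (i) I would turn the failure of relative property (T) into such data. It yields a unitary representation $\pi$ of $G$ with almost invariant unit vectors $(\xi_n)$ but no nonzero $C$-invariant vector; since $C$ is abelian the spectral measures of $\pi|_C$ concentrate at the trivial character while giving it no mass. I would realize this in a p.m.p.\ setting by forming the Gaussian action attached to $\pi$ and taking its product with a genuinely free ergodic action of $G$ (for instance a Bernoulli action), obtaining a free ergodic p.m.p.\ action $G\c (X,\mu)$. From $(\xi_n)$ one extracts a nontrivial asymptotically invariant sequence $w_n\colon X\to C$, and feeding it through the observation above produces a nontrivial asymptotically central sequence in $[\mathcal{R}]$, whence the action is stable and $G$ is stable.

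For (ii) I would start from a stable free ergodic p.m.p.\ action $G\c (X,\mu)$ and use Jones--Schmidt to extract a nontrivial asymptotically central sequence $(u_n)$ in $[\mathcal{R}]$, written $u_n(x)=\gamma_n(x)\cdot x$ with $\gamma_n\colon X\to G$. The subrelation $\mathcal{R}_C$ of $C$-orbits is hyperfinite because $C$ is amenable, and the plan is to show, using relative property (T), that $(u_n)$ is transverse to $\mathcal{R}_C$: the $C$-component of $\gamma_n$ is asymptotically trivial, since otherwise it would furnish a nontrivial asymptotically invariant $C$-valued sequence, contradicting relative property (T). Consequently $(u_n)$ descends to a nontrivial asymptotically central sequence for the quotient relation $\mathcal{R}/\mathcal{R}_C$, which should be the orbit equivalence relation of a free ergodic p.m.p.\ action of $\Gamma=G/C$; by Jones--Schmidt this action is stable, so $\Gamma$ is stable.

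The step I expect to be hardest is, in both parts, the passage between asymptotic invariance under the group $G$ and asymptotic centrality for the whole full group $[\mathcal{R}]$: relative property (T) controls $G$-level almost invariance, whereas stability is a statement about $[\mathcal{R}]$, and bridging the two is exactly where the centrality of $C$ and a careful choice of action must be exploited. In part (ii) this is compounded by the need to make the descent to $\mathcal{R}/\mathcal{R}_C$ precise and to verify that the quotient is genuinely the relation of a free ergodic $\Gamma$-action; I expect this descent, powered by relative property (T), to be the technical heart of the argument.
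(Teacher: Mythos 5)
Your high-level plan --- Gaussian actions for (i), descending an asymptotically central sequence to the central quotient for (ii), with Jones--Schmidt as the criterion --- is the same as the paper's, but both of your reductions to the classical (free, ergodic) Jones--Schmidt setting break down, and repairing exactly this is the technical content of the paper. First, your ``decisive observation'' is not correct as stated: for $w_n\colon X\to C$ and $u_n(x)=w_n(x)x$, asymptotic invariance of $w_n$ only gives the commutation condition for asymptotic centrality; the other condition --- that $u_n^0$ asymptotically fix \emph{every} measurable set --- forces the values of $w_n$ to act almost trivially on the measure algebra, and for a free action this is in direct tension with non-triviality (for any $c\in C\setminus\{e\}$ acting freely there is a set $B$ with $\mu(cB\bigtriangleup B)$ bounded below). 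Concretely, in (i) the product with a Bernoulli action kills your sequence: if $w_n$ is built from the Gaussian coordinate, then on sets of the form $\Omega\times B'$ mixing gives $\nu'\bigl(w_n(\omega)B'\cap B'\bigr)\approx\nu'(B')^2$ whenever $w_n(\omega)$ leaves finite subsets of $C$, while for each fixed $c\neq e$ freeness of the Bernoulli action provides sets moved a definite amount; together these force $w_n\to e$ in measure, i.e.\ triviality. This is why the paper does \emph{not} pass to a free ergodic action at the level of the space: instead it proves Theorem \ref{thm-ac}, extending Jones--Schmidt to groupoids of possibly non-free actions (Theorem \ref{thm-js}), handling non-ergodicity by ergodic decomposition (Lemma \ref{lem-erg}), and handling non-freeness by a measure-equivalence coupling of $G$ with $G\times\Z$ built from the stability isomorphism itself (Lemma \ref{lem-free}). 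Moreover, producing the data in (i) needs more than one representation with almost invariant vectors and no $C$-invariant vectors: the paper's Lemma \ref{lem-pi-c} runs a diagonal argument to obtain irreducible $\pi_k\to 1_G$ and central elements $c_k$ with $|\pi_k(c_k)-1|>1$ while $|\pi_k(c_l)-1|\to 0$ as $l\to\infty$; the second property is what makes every set asymptotically fixed (centrality), the first is what gives non-triviality, and your sketch does not engage with this tension.

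In (ii), the step you flag as needing verification --- that the quotient is the orbit relation of a \emph{free} ergodic $\Gamma$-action --- is not a technical verification but is false in general. The quotient object is the $\Gamma$-action on the space $Z$ of ergodic components of $C\c(X,\mu)$; if the $C$-subaction is already ergodic, then $Z$ is a point and $\Gamma$ acts trivially, and in general freeness fails. So the classical Jones--Schmidt criterion cannot be applied to the quotient, and Theorem \ref{thm-ac} is needed again. Also, ``the $C$-component of $\gamma_n$'' is undefined unless the extension splits; the paper's substitute is to regard $U_n$ as a unit vector in $L^1(G\times X,\R)$ for the conjugation-translation representation, to use relative property (T) through an $L^1$-analogue proved with Mazur maps (Lemma \ref{lem-p}, following Bader--Furman--Gelander--Monod) to show this vector is close to the subspace of $C$-invariant vectors, and then to repair the projection --- via a level-set argument and a correction on a small set --- into an element $V_n\in[\calg]$ that is exactly $C$-invariant as a map $X\to G$ and hence descends to the full group of $\Gamma\ltimes Z$. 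Your instinct that relative (T) makes the asymptotically central sequence transverse to $C$ is the right one, but without the $L^1$/Mazur-map mechanism, the repair step, and the groupoid version of Jones--Schmidt, the argument does not close.
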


These two assertions indicate us that relative property (T) of a central subgroup plays an essential role in characterizing stability of groups with infinite center. 
We have simple applications of Theorem \ref{thm-stab} in the following:

\begin{ex}
Theorem \ref{thm-stab} (i) implies that any discrete countable group with the Haagerup property and having the infinite center is stable.
Examples of such groups are found in \cite[Sections 6.2 and 7.3.3]{ccjjv}.
\end{ex}

\begin{ex}
Let $H$ be a discrete countable group having property (T) and having an infinite central subgroup $C$.
Set $G=H\ast_C H$, the amalgamated free product with respect to the inclusion of $C$ into two copies of $H$.
We have the exact sequence
\[1\to C\to G\to (H/C)\ast (H/C)\to 1.\]
The free product $(H/C)\ast (H/C)$ is not stable, by computation of cost (\cite{gab-cost}) or $\ell^2$-Betti numbers (\cite{gab-l2}), or invariance under measure equivalence of vanishing of a certain bounded cohomology group (\cite[Section 7]{ms}).
The pair $(G, C)$ has property (T), and $G$ is not stable by Theorem \ref{thm-stab} (ii).
\end{ex}

Jones-Schmidt \cite{js} obtain a useful criterion for stability of an ergodic, free and p.m.p.\ action.
It asserts that if there exists a certain sequence in the full group of the associated equivalence relation, called a non-trivial asymptotically central (a.c.) sequence, then the action is stable.
In the proof of Theorem \ref{thm-stab}, we construct a p.m.p.\ action which admits a non-trivial a.c.\ sequence in the full group of the associated groupoid, but is not necessarily ergodic or free.
The following theorem is technically useful, asserting that existence of such a p.m.p.\ action is sufficient for stability of a group.
We refer to Subsection \ref{subsec-stab} for a definition of an a.c.\ sequence for a p.m.p.\ action and its triviality.

\begin{thm}\label{thm-ac}
Let $G$ be a discrete countable group.
If there exists a p.m.p.\ action of $G$ admitting a non-trivial asymptotically central sequence in the full group of the associated groupoid, then $G$ is stable.
\end{thm}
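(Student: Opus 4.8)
My plan is to reduce the hypothesis to the situation covered by the Jones--Schmidt criterion recalled in the introduction: to produce an \emph{ergodic} and \emph{free} p.m.p.\ action of $G$ whose full group carries a non-trivial asymptotically central (a.c.) sequence, after which that action is stable and hence $G$ is stable. Starting from the given action $G\c(X,\mu)$ and a non-trivial a.c.\ sequence $(\phi_n)$ in $[G\ltimes X]$, I carry out two reductions, first to an ergodic action and then to a free one, arranging that a non-trivial a.c.\ sequence survives each step.

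\textbf{Reduction to the ergodic case.} Let $X=\int_T X_t\,d\nu(t)$ be the ergodic decomposition. The full group $[G\ltimes X]$ is the direct integral $\int_T[G\ltimes X_t]\,d\nu(t)$: every element is a measurable field $t\mapsto\psi_t$ with $\psi_t\in[G\ltimes X_t]$, and the uniform metric integrates, $d_u(\alpha,\beta)=\int_T d_u(\alpha_t,\beta_t)\,d\nu(t)$. Fixing a countable family $(\psi^{(k)})_k$ in $[G\ltimes X]$ whose fibres are dense in $[G\ltimes X_t]$ for a.e.\ $t$, asymptotic centrality gives $\int_T d_u(\phi_{n,t}\psi^{(k)}_t,\psi^{(k)}_t\phi_{n,t})\,d\nu\to0$ for each $k$, and a diagonal argument then yields a subsequence along which $d_u(\phi_{n,t}\psi^{(k)}_t,\psi^{(k)}_t\phi_{n,t})\to0$ for a.e.\ $t$ and all $k$. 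Since the commutator map $\psi\mapsto[\phi,\psi]$ is $2$-Lipschitz for $d_u$, uniformly in $\phi$, density of the fibres upgrades this to asymptotic centrality of $(\phi_{n,t})_n$ in $[G\ltimes X_t]$ for a.e.\ $t$. By the paper's notion of triviality, non-triviality of $(\phi_n)$ means exactly that $(\phi_{n,t})_n$ is non-trivial on a non-null set of $t$; intersecting with the full-measure set just obtained and choosing one such $t_0$, I get an ergodic action $G\c Y:=X_{t_0}$ carrying a non-trivial a.c.\ sequence in $[G\ltimes Y]$.

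\textbf{Reduction to the free case.} Fix a free, weakly mixing p.m.p.\ action $G\c(Z,\zeta)$, e.g.\ a Bernoulli action, and pass to the diagonal action on $Y\times Z$ with the product measure. It is free because $Z$ is, and ergodic because $Y$ is ergodic and $Z$ is weakly mixing, so its orbit groupoid is the equivalence relation $\calr:=\calr_{Y\times Z}$. I lift the a.c.\ sequence by $\Phi_n(y,z)=(g_n(y)\cdot y,\ g_n(y)\cdot z)$, where $g_n\colon Y\to G$ is the cocycle exhibiting $\phi_n$ as a bisection of $G\ltimes Y$; this is a well-defined, measure-preserving, injective \emph{homomorphism} $[G\ltimes Y]\to[\calr]$, and it is precisely here that recording the group labels, i.e.\ working in the groupoid rather than the bare orbit relation, is essential. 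Because the lift is a homomorphism, $[\Phi_n,\mathrm{lift}(\psi)]=\mathrm{lift}([\phi_n,\psi])$, so $(\Phi_n)$ is asymptotically central against every lifted element — including lifts of the isotropy elements of $G\ltimes Y$, which already act nontrivially on the $Z$-coordinate — and its non-triviality is preserved for the same reason.

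The main obstacle is exactly this last point: a general element of $[\calr]$ has the form $(y,z)\mapsto(h(y,z)y,h(y,z)z)$ with $h$ genuinely depending on $z$, and such coordinate-mixing elements are \emph{not} uniformly approximable by lifts, so asymptotic centrality of $(\Phi_n)$ against all of $[\calr]$ does not follow formally. Resolving this is the technical heart of the argument: after passing to a further subsequence I expect to use that a non-trivial a.c.\ sequence can be taken so that its realizing labels $g_n$ leave every finite subset of $G$ in measure, together with the mixing of $Z$, to show that $[\Phi_n,\psi]$ is asymptotically trivial for each fixed $\psi\in[\calr]$ (equivalently, to replace $(\Phi_n)$ by a genuinely a.c.\ sequence on $Y\times Z$). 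Granting this, $G\c Y\times Z$ is an ergodic, free, p.m.p.\ action whose full group carries a non-trivial a.c.\ sequence, so by the Jones--Schmidt criterion it is stable, and therefore $G$ is stable.
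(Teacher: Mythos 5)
Your first reduction (to the ergodic case) is sound and is essentially the paper's Lemma \ref{lem-erg}. The fatal problem is the second reduction, and the gap is larger than the one you flag: it is not only centrality against coordinate-mixing elements of $[\calr]$ that fails, but already condition (a) in the definition of an a.c.\ sequence. Write $\Phi_n(y,z)=(\phi_n(y)y,\phi_n(y)z)$ for your lift. For a set of the form $B=Y\times B_Z$, a change of variables gives
\[(\mu\times\zeta)\bigl(\Phi_n^0B\bigtriangleup B\bigr)=\int_Y\zeta\bigl(\phi_n(y)B_Z\bigtriangleup B_Z\bigr)\,d\mu(y),\]
and if the labels $\phi_n(\cdot)$ leave every finite subset of $G$ in measure --- which is exactly what you propose to arrange, and which is what happens in the paper's main application of Theorem \ref{thm-ac}, where the a.c.\ sequence consists of central elements $c_n\to\infty$ acting asymptotically trivially on a Gaussian space --- then mixing of $Z$ forces $\zeta(gB_Z\bigtriangleup B_Z)\to 2\zeta(B_Z)(1-\zeta(B_Z))>0$ along these labels, so the displayed quantity stays bounded away from $0$ whenever $0<\zeta(B_Z)<1$. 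In other words, mixing of $Z$ is not the mechanism that could rescue the argument; it is precisely the mechanism that destroys it. Nor can you escape by choosing $Z$ non-mixing: you would then need a free action of $G$ on which the given divergent labels still act asymptotically trivially on all measurable sets, and producing such a free action is essentially the original problem.

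This is why the paper takes a different route, which never transports a.c.\ sequences onto a free action. It first proves the Jones--Schmidt characterization at the level of groupoids (Theorem \ref{thm-js}): an ergodic p.m.p.\ action, free or not, admitting a non-trivial a.c.\ sequence in the full group of its groupoid $\calg$ satisfies $\calg\cong\calg\times\calr_0$. This occupies most of Section \ref{sec-js} and cannot be bypassed by a formal reduction to the free case. Freeness is then recovered afterwards, in Lemma \ref{lem-free}, from the \emph{conclusion} rather than from the hypothesis: the isomorphism $\calg\cong\calg\times\calr_0$ yields a homomorphism $\alpha\colon\calg\to G\times\Z$, hence a measure equivalence coupling of $G$ with $G\times\Z$; taking the product of this coupling with a free mixing action of $G$ and passing to the quotients produces an ergodic, free, p.m.p.\ and stable action of $G$. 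The reason this succeeds where your lift fails is that stability, being an isomorphism of groupoids, survives the coupling construction, whereas asymptotic centrality of a particular sequence does not survive taking products with mixing actions. So the missing piece in your outline is exactly the content of the paper's Section \ref{sec-js}: the groupoid version of the Jones--Schmidt theorem together with the coupling argument of Lemma \ref{lem-free}.
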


Theorem \ref{thm-stab} (i) is strongly inspired by Connes-Weiss' theorem \cite{cw} stating that any group without property (T) has an ergodic p.m.p.\ action admitting a non-trivial asymptotically invariant (a.i.) sequence.
An a.i.\ sequence for an ergodic p.m.p.\ action was introduced by Schmidt \cite{sch}, who shows that any ergodic p.m.p.\ action of a property (T) group admits no non-trivial a.i.\ sequence (see Subsection \ref{subsec-stab} for a definition of an a.i.\ sequence).
These two assertions characterize property (T) groups in terms of their ergodic p.m.p.\ actions.
We refer to \cite{sch-ame} for further results related to this characterization. 
It is notable that from denying property (T), Connes-Weiss construct an ergodic p.m.p.\ action enjoying a certain asymptotic property.
The desired action is obtained through Gaussian actions associated with orthogonal representations of a group.
The proof of Theorem \ref{thm-stab} (i) heavily relies on their construction.

\medskip

The paper is organized as follows:
In Section \ref{sec-pre}, we review property (T) of the pair of a group and its subgroup.
We also review construction of the Gaussian action associated with a unitary representation.
In Section \ref{sec-js}, we prove Theorem \ref{thm-ac}.
In Sections \ref{sec-ce} and \ref{sec-cq}, we prove assertions (i) and (ii) in Theorem \ref{thm-stab}, respectively.

\medskip

Let us summarize notation and terminology employed throughout the paper.
We mean by a discrete group a discrete and countable group.
We mean by a {\it standard probability space} a standard Borel space equipped with a probability measure.
All relations among measurable sets and maps that appear in the paper are understood to hold up to sets of measure zero, unless otherwise stated.
Let $\N$ denote the set of positive integers.
For two subsets $A$ and $B$ of a set, the symbol $A\bigtriangleup B$ stands for the symmetric difference between $A$ and $B$, i.e., the set $(A\setminus B)\cup (B\setminus A)$.



\section{Preliminaries}\label{sec-pre}

\subsection{Relative property (T)}

We review this property, and give its consequence toward certain linear isometric representations on $L^p$ spaces.
We recommend the reader to consult \cite{bfgm} and \cite{bhv} for details.

\subsubsection{Terminology}

Let $G$ be a discrete group.
Let $\cala$ be a real or complex Banach space.
We denote by $O(\cala)$ the group of invertible linear isometries on $\cala$.
We mean by a linear isometric (or orthogonal) representation of $G$ on $\cala$ a homomorphism $\pi \colon G\to O(\cala)$.
We often write $(\pi, \cala)$ for such a representation.
A linear isometric representation of $G$ on a complex Hilbert space is called a unitary representation of $G$.
We denote by $1_G$ the trivial representation of $G$ if the scalar field can be understood from the context.

Let $(\pi, \cala)$ be a linear isometric representation of $G$.
We say that a sequence $(v_n)_n$ in $\cala$ is {\it asymptotically $G$-invariant} under $\pi$ if for any $g\in G$, we have $\Vert \pi(g)v_n-v_n\Vert \to 0$ as $n\to \infty$.
We say that the representation $\pi$ {\it almost has $G$-invariant vectors} if there exists a sequence of unit vectors in $\cala$ which is asymptotically $G$-invariant under $\pi$.

Let $C$ be a subgroup of $G$.
We say that the pair $(G, C)$ has {\it property (T)} (or $C$ has {\it relative property (T)} in $G$) if any unitary representation of $G$ almost having $G$-invariant vectors has a non-zero $C$-invariant vector.
It is known that the pair $(G, C)$ has property (T) if and only if for any net $(\pi_i)_i$ in the unitary dual of $G$ converging to $1_G$, the representation $\pi_i$ eventually has a non-zero $C$-invariant vector (\cite[Remark 1.4.4 (vi)]{bhv}).
We say that $G$ has {\it property (T)} if the pair $(G, G)$ has property (T).

\subsubsection{Certain representations on $L^p$ spaces}\label{subsubsec-lp}

Results in this subsubsection will be used in Section \ref{sec-cq} for the proof of Theorem \ref{thm-stab} (ii).

Let $G$ be a discrete group, $C$ a central subgroup of $G$, and $G\c (X, \mu)$ a p.m.p.\ action.
Let $\theta \colon (X, \mu)\to (Z, \xi)$ be the ergodic decomposition for the action $C\c (X, \mu)$.
We have the disintegration $\mu =\int_Z\mu_z\, d\xi(z)$ of $\mu$ with respect to $\theta$.

Let $p$ be a real number with $p\geq 1$.
We set $\cala_p=L^p(G\times X, \R)$, the Banach space of real-valued $p$-integrable functions on $G\times X$, where $G\times X$ is equipped with the product measure of the counting measure on $G$ and $\mu$.
Let $\pi_p$ be the linear isometric representation of $G$ on $\cala_p$ defined by
\[(\pi_p(g)f)(h, x)=f(g^{-1}hg, g^{-1}x)\quad \textrm{for}\ g, h\in G,\ f\in \cala_p \ \textrm{and}\ x\in X.\]  
Let $\cala_p^C$ denote the subspace of $\cala_p$ of $C$-invariant vectors under $\pi_p$.
Since $C$ is central in $G$, the space $\cala_p^C$ consists of all vectors $f$ in $\cala_p$ such that for any $g\in G$, the function $f(g, \cdot)$ on $X$ is $C$-invariant. 
We define a linear map $P\colon \cala_p \to \cala_p$ by
\[P(f)(g, x)=\mu_{\theta(x)}(f(g, \cdot))\quad \textrm{for}\ f\in \cala_p,\ g\in G\ \textrm{and}\ x\in X.\]
The map $P$ is the projection onto $\cala_p^C$.
We set $\calk_p=\ker P$.

\begin{lem}\label{lem-p}
In the above notation, we suppose that the pair $(G, C)$ has property (T).
Let $(v_n)_{n=1}^\infty$ be a sequence of unit vectors in $\cala_p$ which is asymptotically $G$-invariant under $\pi_p$.
Then we have $\Vert v_n-Pv_n\Vert \to 0$ as $n\to \infty$.
\end{lem}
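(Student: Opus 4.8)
The plan is to reduce the statement to the Hilbert space $\cala_2=L^2(G\times X,\R)$ by means of the Mazur map, to resolve it there using relative property (T) of $(G,C)$, and finally to transport the conclusion back to $\cala_p$. The first step is to note that $\pi_p$ is the Koopman representation of a measure-preserving action: for $g\in G$ the transformation $\tau_g(h,x)=(ghg^{-1},gx)$ preserves the product of the counting measure on $G$ and $\mu$ (conjugation merely permutes $G$, and $g$ preserves $\mu$), and $\pi_p(g)f=f\circ\tau_g^{-1}$. Hence the Mazur map $M=M_{p,2}$ given by $M(f)=|f|^{p/2}\operatorname{sgn}(f)$, which acts pointwise and carries the unit sphere of $\cala_p$ onto that of $\cala_2$, satisfies $M\circ\pi_p(g)=\pi_2(g)\circ M$ for every $g$, where $\pi_2$ denotes the analogous representation on $\cala_2$. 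Since $M$ and its inverse $M_{2,p}$ are uniformly continuous on bounded sets (see \cite{bfgm}), setting $w_n:=M(v_n)$ produces a sequence of unit vectors in $\cala_2$ with $\Vert\pi_2(g)w_n-w_n\Vert=\Vert M(\pi_p(g)v_n)-M(v_n)\Vert\to0$ for each $g$; that is, $(w_n)$ is asymptotically $G$-invariant under $\pi_2$.

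Next I would settle the Hilbert-space case. Complexifying if necessary, regard $\pi_2$ as a unitary representation of $G$. Because $C$ is normal in $G$, the subspace $\cala_2^C$ is $G$-invariant, and $P$ restricts on $\cala_2$ to the orthogonal projection onto $\cala_2^C$; hence $P$ commutes with every $\pi_2(g)$, so $\calk_2=\ker P$ is $\pi_2$-invariant and, by construction, contains no non-zero $C$-invariant vector. If $\Vert w_n-Pw_n\Vert\to0$ were to fail, I could pass to a subsequence with $\Vert(I-P)w_n\Vert\ge\delta>0$ and form the unit vectors $u_n=(I-P)w_n/\Vert(I-P)w_n\Vert$ in $\calk_2$. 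Since $I-P$ commutes with $\pi_2(g)$, these $u_n$ are asymptotically $G$-invariant, so relative property (T) of $(G,C)$ would provide a non-zero $C$-invariant vector in $\calk_2$, a contradiction. Therefore $\Vert w_n-Pw_n\Vert\to0$ in $\cala_2$.

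Finally I would transport the conclusion back. Write $\phi=M_{2,p}$ for the inverse Mazur map, so that $v_n=\phi(w_n)$. The key observation is that $\phi$ is applied pointwise and therefore preserves $C$-invariance: since $Pw_n\in\cala_2^C$ is constant on $C$-ergodic components in the $X$-variable, so is $\phi(Pw_n)$, whence $\phi(Pw_n)\in\cala_p^C=\operatorname{ran}P$. Uniform continuity of $\phi$ on the unit ball of $\cala_2$ (both $w_n$ and $Pw_n$ lie there) gives $\Vert v_n-\phi(Pw_n)\Vert=\Vert\phi(w_n)-\phi(Pw_n)\Vert\to0$. As $P$ is a contraction that fixes $\cala_p^C$, so $(I-P)\phi(Pw_n)=0$, I obtain
\[\Vert v_n-Pv_n\Vert=\Vert(I-P)\bigl(v_n-\phi(Pw_n)\bigr)\Vert\le 2\,\Vert v_n-\phi(Pw_n)\Vert\to0,\]
which is the desired conclusion. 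I expect this last transport to be the main obstacle: the Mazur map does not commute with $P$, so one cannot simply apply $\phi$ to the $\cala_2$-estimate. The argument circumvents this precisely through the fact that the pointwise map $\phi$ preserves the $C$-invariant subspace, which allows the two non-commuting projections to be absorbed into a single application of the bounded operator $I-P$.
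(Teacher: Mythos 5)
Your proof is correct, and its skeleton is the same as the paper's (which follows \cite[Section 4.a]{bfgm}): move the asymptotically invariant sequence into $\cala_2$ via the Mazur map, and apply relative property (T) to the orthogonal complement of $\cala_2^C$. Where you genuinely differ is in the transfer mechanism between $\cala_p$ and $\cala_2$. The paper applies $M_{p,2}$ to $u_n=v_n-Pv_n$ and runs a contradiction entirely inside $\cala_2$: it uses the fact that the unit sphere of $\calk_p=\ker P$ is uniformly separated from $\cala_p^C$, so that by uniform continuity the images $M_{p,2}(u_n)$ stay uniformly far from $\cala_2^C$, and their orthogonal projections onto $\calk_2$ have uniformly positive norm, contradicting relative (T). You instead push $v_n$ itself forward, prove $\Vert w_n-Pw_n\Vert\to 0$ in $\cala_2$, and pull the estimate back with $\phi=M_{2,p}$; the non-commutation of $\phi$ with $P$ is neutralized by your observation that the pointwise map $\phi$ carries $\cala_2^C$ into $\cala_p^C$, so that $(I-P)\phi(Pw_n)=0$ and the error is absorbed by the bounded operator $I-P$. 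What each route buys: the paper needs only the sphere-to-sphere uniform continuity of $M_{p,2}$ (the form stated in \cite[Theorem 9.1]{bl}) but must justify the uniform-separation claim for $\calk_p$; your route avoids that claim entirely but needs uniform continuity of the inverse Mazur map on the unit \emph{ball} rather than the unit sphere, since $Pw_n$ is generally not a unit vector. That stronger form does hold --- it follows from the standard pointwise inequalities for $t\mapsto \operatorname{sign}(t)|t|^{\alpha}$, with an application of H\"older's inequality in the case $\alpha=2/p>1$ --- so there is no gap, but it is worth flagging, since the reference usually cited is stated for spheres. The remaining ingredients (the identification of $P|_{\cala_2}$ with the orthogonal projection onto $\cala_2^C$, the commutation of $P$ with the representation, and the complexification before invoking relative property (T)) coincide with the paper's.
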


\begin{proof}
We follow \cite[Section 4.a]{bfgm}.
We define the Mazur map $M_{p, 2}\colon \cala_p\to \cala_2$ by $M_{p, 2}(f)={\rm sign}(f)|f|^{p/2}$ for $f\in \cala_p$, where ${\rm sign}(f)$ is the function on $G\times X$ taking values $1$, $0$ and $-1$ on the sets $\{ f>0\}$, $\{ f=0 \}$ and $\{ f<0\}$, respectively.
The map $M_{p, 2}$ is not linear, but induces a uniformly continuous homeomorphism from the unit sphere of $\cala_p$ onto that of $\cala_2$ (\cite[Theorem 9.1]{bl}).

Assuming that the norm $\Vert v_n-Pv_n\Vert$ does not converge to 0, we deduce a contradiction.
For $n\in \N$, we set $u_n=v_n-Pv_n$ and $w_n=M_{p, 2}(u_n)$.
For any $n\in \N$, we have $u_n\in \calk_p$.
The sequence $(u_n)_n$ is asymptotically $G$-invariant under $\pi_p$ because for any $g\in G$, the equation $P\circ \pi_p(g)=\pi_p(g)\circ P$ holds.
Taking a subsequence of $(u_n)_n$, we may assume that the norm $\Vert u_n\Vert$ is uniformly positive.

The unit sphere of $\calk_p$ is uniformly separated from $\cala_p^C$ at distance 1, that is, for any unit vector $u\in \calk_p$ and any vector $v\in \cala_p^C$, we have $\Vert u-v\Vert \geq 1$.
One can check that the Mazur map $M_{p, 2}$ sends $\cala_p^C$ onto $\cala_2^C$, and for any $g\in G$, we have $M_{p, 2}\circ \pi_p(g)=\pi_2(g)\circ M_{p, 2}$.
It follows from the uniform continuity of $M_{p, 2}$ that the sequence $(w_n)_n$ is uniformly separated from $\cala_2^C$ and is asymptotically $G$-invariant under $\pi_2$.
For $n\in \N$, let $w_n'$ be the orthogonal projection of $w_n$ to the subspace $\calk_2$ that is the orthogonal complement of $\cala_2^C$ in $\cala_2$.
It also follows that the norm $\Vert w_n'\Vert$ is uniformly positive, and the sequence $(w_n')_n$ is asymptotically $G$-invariant under $\pi_2$.
The restriction of $\pi_2$ to $\calk_2$ thus has an asymptotically $G$-invariant sequence, but has no non-zero $C$-invariant vector.
The unitary representation of $G$ on the complexification of $\calk_2$ satisfies the same property.
This contradicts the assumption that the pair $(G, C)$ has property (T).
\end{proof}

We set $\calbb_p=L^p(X, \R)$, the Banach space of real-valued $p$-integrable functions on $(X, \mu)$.
Let $\kappa_p$ be the Koopman representation of $G$ on $\calbb_p$ associated with the action $G\c (X, \mu)$, i.e., the representation defined by
\[(\kappa_p(g)f)(x)=f(g^{-1}x)\quad \textrm{for}\ g\in G,\ f\in \calbb_p \ \textrm{and}\ x\in X.\]
We have the following description for $\kappa_p$ similar to that for $\pi_p$.
Let $\calbb_p^C$ denote the subspace of $\calbb_p$ of $C$-invariant functions on $X$.
We define a linear map $Q\colon \calbb_p \to \calbb_p$ by $Q(f)(x)=\mu_{\theta(x)}(f)$ for $f\in \calbb_p$ and $x\in X$.
The map $Q$ is the projection onto $\calbb_p^C$.
The space $\calbb_p$ is naturally identified with the subspace of $\cala_p$ of functions supported on $\{ e\} \times X$, where $e$ is the neutral element of $G$, so that $\kappa_p$ is a subrepresentation of $\pi_p$.
Under this identification, $Q$ is the restriction of $P$.
Lemma \ref{lem-p} therefore implies the following:

\begin{lem}\label{lem-q}
In the above notation, we suppose that the pair $(G, C)$ has property (T).
Let $(u_n)_{n=1}^\infty$ be a sequence of unit vectors in $\calbb_p$ which is asymptotically $G$-invariant under $\kappa_p$.
Then we have $\Vert u_n-Qu_n\Vert \to 0$ as $n\to \infty$.
\end{lem}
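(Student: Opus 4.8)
The plan is to deduce Lemma \ref{lem-q} directly from Lemma \ref{lem-p} via the isometric embedding of $\calbb_p$ into $\cala_p$ described just above the statement, so that essentially no new analysis is required. First I would fix explicit notation for this embedding: to $u\in \calbb_p$ associate the function $\iota(u)\in \cala_p$ supported on $\{ e\} \times X$ with $\iota(u)(e, x)=u(x)$. Computing in the $L^p$-norm with the product of counting measure and $\mu$, only the term $h=e$ survives, so $\Vert \iota(u)\Vert^p=\sum_{h\in G}\int_X|\iota(u)(h, x)|^p\, d\mu(x)=\int_X|u(x)|^p\, d\mu(x)=\Vert u\Vert^p$. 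Hence $\iota$ is isometric and in particular carries unit vectors to unit vectors.

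The key step is to verify that $\iota$ simultaneously intertwines the two representations and the two projections. For $g\in G$ the support condition $g^{-1}hg=e$ forces $h=e$, so $\pi_p(g)\iota(u)$ is again supported on $\{ e\} \times X$, and $(\pi_p(g)\iota(u))(e, x)=\iota(u)(e, g^{-1}x)=u(g^{-1}x)=(\kappa_p(g)u)(x)$; thus the image of $\iota$ is $\pi_p$-invariant and $\iota \circ \kappa_p(g)=\pi_p(g)\circ \iota$. Likewise $P(\iota(u))(g, x)=\mu_{\theta(x)}(\iota(u)(g, \cdot))$ vanishes for $g\neq e$ since then $\iota(u)(g, \cdot)=0$, while $P(\iota(u))(e, x)=\mu_{\theta(x)}(u)=Q(u)(x)$, so $\iota \circ Q=P\circ \iota$. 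These are exactly the compatibilities recorded in the paragraph preceding the lemma, made explicit.

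With these identities in hand the conclusion is immediate. Starting from a sequence $(u_n)_n$ of unit vectors in $\calbb_p$ that is asymptotically $G$-invariant under $\kappa_p$, I set $v_n=\iota(u_n)$. Then $(v_n)_n$ is a sequence of unit vectors in $\cala_p$, and from $\iota \circ \kappa_p(g)=\pi_p(g)\circ \iota$ together with the isometry of $\iota$ it is asymptotically $G$-invariant under $\pi_p$. Lemma \ref{lem-p} gives $\Vert v_n-Pv_n\Vert \to 0$, and since $v_n-Pv_n=\iota(u_n)-\iota(Qu_n)=\iota(u_n-Qu_n)$ with $\iota$ isometric, we conclude $\Vert u_n-Qu_n\Vert =\Vert v_n-Pv_n\Vert \to 0$. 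I do not expect a genuine obstacle here: all of the substantive work—the Mazur map argument and the appeal to relative property (T)—is already absorbed into Lemma \ref{lem-p}. The only point requiring care is the bookkeeping of the identification, namely confirming that a single map $\iota$ is at once isometric and intertwines $\kappa_p$ with $\pi_p$ and $Q$ with $P$, so that the conclusion of Lemma \ref{lem-p} transports back verbatim to $\calbb_p$.
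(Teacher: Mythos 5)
Your proposal is correct and is precisely the paper's own argument: the paper identifies $\calbb_p$ with the subspace of $\cala_p$ of functions supported on $\{e\}\times X$, notes that $\kappa_p$ becomes a subrepresentation of $\pi_p$ and that $Q$ is the restriction of $P$ under this identification, and then invokes Lemma \ref{lem-p}. Your write-up simply makes the isometry and the intertwining relations $\iota\circ\kappa_p(g)=\pi_p(g)\circ\iota$ and $\iota\circ Q=P\circ\iota$ explicit, which is exactly the bookkeeping the paper leaves to the reader.
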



\subsection{Gaussian actions}

We briefly describe construction of the p.m.p.\ action associated with a unitary (or orthogonal) representation of a group, called the Gaussian action.
The reader should be referred to \cite[Appendix A.7]{bhv} and \cite[Appendices A--E]{kec} for more details.
Results in this subsection will be used in Section \ref{sec-ce} for the proof of Theorem \ref{thm-stab} (i).

Let $G$ be a discrete group, $\calh$ a complex Hilbert space with the inner product $\langle \cdot, \cdot \rangle_\calh$, and $(\pi, \calh)$ a unitary representation of $G$.
We fix an orthonormal basis $\Delta$ of $\calh$.
Let $\calh_\R$ be the realification of $\calh$.
This is the real Hilbert space whose elements are exactly those of $\calh$ and vector operations are defined by the restriction of the scalar field to $\R$.
The inner product on $\calh_\R$ is given by
\[\langle \xi, \eta \rangle_{\calh_\R}={\rm Re}\, \langle \xi, \eta \rangle_\calh \quad \textrm{for}\ \xi, \eta\in \calh_\R.\]
We set $\Delta_\R=\Delta \cup \{\,  i\xi \mid \xi \in \Delta \, \}$.
The set $\Delta_\R$ is an orthonormal basis of $\calh_\R$.
Each unitary operator on $\calh$ is also an orthogonal operator on $\calh_\R$.
We therefore have the orthogonal representation $(\pi_\R, \calh_\R)$ of $G$ associated with $\pi$.
We define the symmetric Fock space
\[S(\calh_\R)=\bigoplus_{k=0}^\infty S^k(\calh_\R),\]
where we set $S^0(\calh_\R)=\R$, and for each $k\in \N$, we denote by $S^k(\calh_\R)$ the $k$-th symmetric tensor product of $\calh_\R$.
We naturally have the orthogonal representation $(\pi_\R^S, S(\calh_\R))$ of $G$ associated with $\pi_\R$.

We define the standard probability space
\[(\Omega, \nu)=\prod_{\xi \in \Delta_\R}\left(\R, \frac{1}{\, \sqrt{2\pi}\, }e^{-x^2/2}\, dx\right).\]
For $\xi \in \Delta_\R$, we define a function $X_\xi\colon \Omega \to \R$ as the projection onto the coordinate $\xi$, i.e., the function given by $X_\xi((\omega_\eta)_{\eta \in \Delta_\R})=\omega_\xi$ for $(\omega_\eta)_{\eta \in \Delta_\R}\in \Omega$.
Let $L^2(\Omega, \nu)$ be the space of complex-valued square-integrable functions on $(\Omega, \nu)$.
Let $L^2(\Omega, \nu, \R)$ be the subspace of $L^2(\Omega, \nu)$ consisting of real-valued functions.

We have the canonical isometric isomorphism $\Psi \colon S(\calh_\R)\to L^2(\Omega, \nu, \R)$ (see the proof of \cite[Theorem A.7.13]{bhv}).
The map $\Psi$ sends a vector in $S^0(\calh_\R)$ to a constant function on $\Omega$ naturally.
For any $\xi \in \Delta_\R$, regarding it as a vector in $S^1(\calh_\R)$, we have $\Psi(\xi)=X_\xi$.
For any $T\in O(\calh_\R)$, there exists a unique automorphism $\bar{T}$ of $(\Omega, \nu)$ such that the orthogonal operator on $L^2(\Omega, \nu, \R)$ associated with $\bar{T}$ is isomorphic to the orthogonal operator on $S(\calh_\R)$ associated with $T$, through $\Psi$.
We thus have a p.m.p.\ action $G\c (\Omega, \nu)$ such that the associated representation of $G$ on $L^2(\Omega, \nu, \R)$ is isomorphic to the representation $\pi_\R^S$ of $G$ on $S(\calh_\R)$, through $\Psi$.
This action $G\c (\Omega, \nu)$ is called the {\it Gaussian action} associated with $\pi$.

Pick $\xi \in \Delta$ and set $A=\{ \, \omega \in \Omega \mid X_\xi(\omega)\geq 0\, \}$.
Assertion (i) in the following lemma asserts that for any $g\in G$, the measure $\nu(gA\bigtriangleup A)$ is close to 0 if and only if the vector $\pi(g)\xi$ is close to $\xi$.
This observation plays a crucial role in Connes-Weiss' construction of a non-trivial a.i.\ sequence (\cite{cw}), and also appears in the proof of \cite[Theorem 2.2.2]{ccjjv} and the proof of \cite[Theorem 2]{gw} (see also \cite[Theorem 6.3.4]{bhv} for Connes-Weiss' theorem).

\begin{lem}\label{lem-gauss}
Let $G$ be a discrete group, $\calh$ a complex Hilbert space with an orthonormal basis $\Delta$, and $(\pi, \calh)$ a unitary representation of $G$.
We have the p.m.p.\ action $G\c (\Omega, \nu)$ constructed above.
Let $\kappa$ denote the Koopman representation of $G$ on $L^2(\Omega, \nu)$ associated with this action.
Then the following assertions hold:
\begin{enumerate}
\item Pick $\xi \in \Delta$ and set $A=\{ \, \omega \in \Omega \mid X_\xi(\omega)\geq 0\, \}$.
Then $\nu(A)=1/2$, and for any $g\in G$, we have $\nu(gA\bigtriangleup A)=\alpha_g/\pi$,
where $\alpha_g={\rm Arccos}\, ({\rm Re}\, \langle \pi(g)\xi, \xi \rangle_\calh )$. 
\item Let $(g_n)_n$ be a sequence in $G$ such that for any $\eta \in \calh$, we have $\Vert \pi(g_n)\eta -\eta \Vert \to 0$ as $n\to \infty$.
Then for any $f\in L^2(\Omega, \nu)$, we have $\Vert \kappa(g_n)f-f\Vert \to 0$ as $n\to \infty$.
In particular, for any measurable subset $B$ of $\Omega$, we have $\nu(g_nB\bigtriangleup B)\to 0$ as $n\to \infty$.
\end{enumerate}
\end{lem}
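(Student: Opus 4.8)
The plan is to handle the two assertions separately: the measure identity in (i) comes from the structure of the first Wiener chaos together with the rotational invariance of the planar Gaussian measure, while (ii) comes from strong convergence of the second-quantized representation on the symmetric Fock space. For (i), I would first note that $X_\xi$ is the $\xi$-coordinate of $(\Omega, \nu)$, hence has the standard normal law; its symmetry about $0$ gives $\nu(A)=1/2$ immediately. The crux is to identify the translate $gA$. Since the action $G\c (\Omega, \nu)$ was constructed so that $\Psi$ intertwines $\pi_\R^S$ with the Koopman representation, and since $\pi_\R^S$ restricts to $\pi_\R$ on $S^1(\calh_\R)=\calh_\R$, one has $\kappa(g)X_\xi=\Psi(\pi(g)\xi)$. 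Writing $\eta=\pi(g)\xi$, this gives
\[ A=\{\, \Psi(\xi)\geq 0\, \}, \qquad gA=\{\, \Psi(\eta)\geq 0\, \}. \]
Because $\Psi$ maps $\calh_\R$ isometrically onto the first chaos, $(\Psi(\xi),\Psi(\eta))$ is a centered jointly Gaussian pair whose covariance matrix has diagonal entries $\Vert \xi\Vert^2=\Vert \eta\Vert^2=1$ and off-diagonal entry $\langle \xi,\eta\rangle_{\calh_\R}={\rm Re}\,\langle \pi(g)\xi,\xi\rangle_\calh=\cos\alpha_g$.

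Since the joint law of $(\Psi(\xi),\Psi(\eta))$ depends only on this covariance, I would compute $\nu(gA\cap A)$ in the concrete model of a standard Gaussian $(\zeta_1,\zeta_2)$ on $\R^2$ by setting $U=\zeta_1$ and $V=\cos\alpha_g\,\zeta_1+\sin\alpha_g\,\zeta_2$, where $\alpha_g\in[0,\pi]$ so that $\sin\alpha_g\geq 0$. Then $\{U\geq 0\}$ and $\{V\geq 0\}$ are half-planes through the origin whose inner normals $(1,0)$ and $(\cos\alpha_g,\sin\alpha_g)$ make angle $\alpha_g$, so their symmetric difference is a pair of opposite wedges of total angular measure $2\alpha_g$. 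By rotational invariance of the standard planar Gaussian, a wedge of angle $\beta$ at the origin has measure $\beta/(2\pi)$, and therefore $\nu(gA\bigtriangleup A)=2\alpha_g/(2\pi)=\alpha_g/\pi$, as claimed. (Alternatively one may quote the orthant formula $\nu(gA\cap A)=1/4+({\rm arcsin}\cos\alpha_g)/(2\pi)$ and simplify.) The only genuinely substantive input is the covariance-preserving property of the first chaos, which is immediate from the construction of $\Psi$.

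For (ii), I would reduce the claim to strong convergence $\pi_\R^S(g_n)\to {\rm id}$ on $S(\calh_\R)$; transporting this through $\Psi$ and complexifying yields $\Vert \kappa(g_n)f-f\Vert \to 0$ for every $f\in L^2(\Omega,\nu)$. The hypothesis says precisely that $\pi_\R(g_n)\to {\rm id}$ strongly on $S^1(\calh_\R)=\calh_\R$. On an elementary symmetric tensor one has $\pi_\R^S(g_n)(\xi_1\odot\cdots\odot\xi_k)=(\pi_\R(g_n)\xi_1)\odot\cdots\odot(\pi_\R(g_n)\xi_k)$, which converges to $\xi_1\odot\cdots\odot\xi_k$ by continuity of the symmetric tensor product. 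Since such tensors span a dense subspace and each $\pi_\R^S(g_n)$ is a grading-preserving isometry, a standard $\varepsilon/3$ argument—approximating a unit vector first by a finite grading truncation, then by elementary tensors, and bounding the tail using that the operators have norm one—upgrades degree-one convergence to strong convergence on all of $S(\calh_\R)$. Finally, taking $f=\mathbf{1}_B$ gives $\Vert \kappa(g_n)\mathbf{1}_B-\mathbf{1}_B\Vert^2=\nu(g_nB\bigtriangleup B)$, so the last assertion follows. The main obstacle, such as it is, is this passage from degree one to the full Fock space, which is routine once one keeps careful track of uniform boundedness and the grading.
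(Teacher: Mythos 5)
Your proof is correct and follows essentially the same route as the paper: assertion (i) is reduced to the rotational invariance of a two-dimensional standard Gaussian (the paper writes $gX_\xi=(\cos \alpha_g)X_\xi+(\sin \alpha_g)Y$ with $Y$ a standard Gaussian independent of $X_\xi$, which is exactly your covariance computation in a concrete planar model), and assertion (ii) is obtained by upgrading strong convergence on $\calh_\R$ to strong convergence of $\pi_\R^S$ on $S(\calh_\R)$ and transporting it through the intertwiner $\Psi$. The only differences are cosmetic: you spell out the density/$\varepsilon/3$ argument for (ii) that the paper leaves implicit, and your model handles $\alpha_g=0$ uniformly rather than as a separate case.
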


\begin{proof}
We prove assertion (i).
The equation $\nu(A)=1/2$ follows from the origin symmetry of the measure $(1/\sqrt{2\pi})e^{-x^2/2}\, dx$ on $\R$.
Pick $g\in G$.
We have the equation
\[gX_\xi =\sum_{\eta \in \Delta_\R}\langle \pi_\R(g)\xi, \eta \rangle_{\calh_\R}X_\eta =({\rm Re}\, \langle \pi(g)\xi, \xi \rangle_\calh)X_\xi + \sum_{\eta \in \Delta_\R \setminus \{ \xi \}}\langle \pi_\R(g)\xi, \eta \rangle_{\calh_\R}X_\eta.\]
If $\alpha_g=0$, then $gX_\xi =X_\xi$.
We therefore have $gA=A$, and assertion (i) follows.
Suppose $\alpha_g\neq 0$.
The equation
\[gX_\xi =(\cos \alpha_g)X_\xi +(\sin \alpha_g)Y\]
then holds, where we set
\[Y=(\sin \alpha_g)^{-1}\sum_{\eta \in \Delta_\R \setminus \{ \xi \}}\langle \pi_\R(g)\xi, \eta \rangle_{\calh_\R}X_\eta.\]
Both $X_\xi$ and $Y$ are Gaussian random variables with mean 0 and variance 1, and they are independent.
By the circular symmetry of the joint distribution of $X_\xi$ and $Y$, we have
\[\nu(gA\bigtriangleup A)=\nu(\{  gX_\xi \geq 0,\ X_\xi <0 \})+\nu(\{  gX_\xi <0,\ X_\xi \geq 0 \})=\frac{\, \alpha_g}{2\pi }+\frac{\, \alpha_g}{ 2\pi }=\frac{\, \alpha_g}{\pi}.\]
Assertion (i) follows.

Under the assumption of assertion (ii), by the definition of the orthogonal representation $(\pi_\R^S, S(\calh_\R))$ of $G$, for any $\zeta \in S(\calh_\R)$, we have $\Vert \pi_\R^S(g_n)\zeta -\zeta \Vert \to 0$ as $n\to \infty$.
Assertion (ii) is proved through the $G$-equivariant isometric isomorphism $\Psi \colon S(\calh_\R)\to L^2(\Omega, \nu, \R)$.
\end{proof}



\section{Stable actions}\label{sec-js}

We discuss stability of a discrete measured groupoid and its characterization in terms of asymptotically central sequences, following Jones-Schmidt \cite{js}, who deal with the case where the groupoid is principal.
For simplicity of the notation, we only focus on groupoids associated with group actions.

\subsection{Notation and terminology}\label{subsec-stab}

Let $(X, \mu)$ be a standard probability space with $\calb$ the algebra of measurable subsets of $X$.
Let $G$ be a discrete group and $G\c (X, \mu)$ a p.m.p.\ action.
The discrete measured groupoid associated with this action is denoted by $G \ltimes (X, \mu)$, and is simply denoted by $G \ltimes X$ if there is no confusion.
This is a discrete measured groupoid on $(X, \mu)$ whose elements are exactly those of $G\times X$.
The range and source maps of $G \ltimes (X, \mu)$ are defined by $r(g, x)=gx$ and $s(g, x)=x$, respectively, for $g\in G$ and $x\in X$.
The product on $G \ltimes (X, \mu)$ is defined by $(g_1, g_2x)(g_2, x)=(g_1g_2, x)$ for $g_1, g_2\in G$ and $x\in X$.
The unit at $x\in X$ is $(e, x)$, where $e$ is the neutral element of $G$.
The inverse of $(g, x)$ for $g\in G$ and $x\in X$ is $(g^{-1}, gx)$.

We set $\calg =G\ltimes (X, \mu)$.
For a measurable map $U\colon X\to G$, let $U^0\colon X\to X$ denote the map defined by $U^0(x)=U(x)x$ for $x\in X$.
We define the {\it full group} of $\calg$, denoted by $[\calg]$, as the set of measurable maps $U\colon X\to G$ such that $U^0$ is an automorphism of $X$.
Let $I\in [\calg]$ be the map defined by $I(x)=e$ for any $x\in X$.
For $U, V\in [\calg]$, we define their composition $U\cdot V\in [\calg]$ by $(U\cdot V)(x)=U(V^0(x))V(x)$ for $x\in X$.
The equation $(U\cdot V)^0=U^0\circ V^0$ then holds.
For $U\in [\calg]$, we define its inverse $U^\dashv \in [\calg]$ by $U^\dashv(x)=U((U^0)^{-1}(x))^{-1}$ for $x\in X$.
The equation $U\cdot U^\dashv =I=U^\dashv \cdot U$ holds.
It follows that $[\calg]$ is a group with respect to these operations and with $I$ the neutral element.

A discrete measured equivalence relation $\calr$ on $(X, \mu)$ admits the structure of a discrete measured groupoid on $(X, \mu)$ defined as follows:
The range and source maps $r, s\colon \calr \to X$ are defined by $r(x, y)=x$ and $s(x, y)=y$ for $(x, y)\in \calr$.
The product on $\calr$ is defined by $(x, y)(y, z)=(x, z)$ for $(x, y), (y, z)\in \calr$.
The unit at $x\in X$ is $(x, x)$.
The inverse of $(x, y)\in \calr$ is $(y, x)$.
If $\calr$ is associated with the p.m.p.\ action $G\c (X, \mu)$, that is, $\calr$ is defined by the equation
\[\calr =\{ \, (gx, x)\in X\times X\mid g\in G,\ x\in X\, \},\]
then we have the homomorphism from $\calg$ into $\calr$ sending each $(g, x)\in \calg$ to $(gx, x)\in \calr$.
This homomorphism is an isomorphism if and only if the action $G\c (X, \mu)$ is free.

\medskip

\noindent {\bf Stability.}
Let $\calr_0$ be the ergodic hyperfinite equivalence relation of type ${\rm II}_1$ on a standard probability space $(X_0, \mu_0)$ (\cite[Chapter II]{km}, \cite[Chapter XIII]{take3}).
We have an ergodic, free and p.m.p.\ action $\Z \c (X_0, \mu_0)$ of the infinite cyclic group $\Z$ such that the associated equivalence relation is equal to $\calr_0$.

Let $G$ be a discrete group.
We say that an ergodic p.m.p.\ action $G\c (X, \mu)$ is {\it stable} if the associated groupoid $\calg =G\ltimes (X, \mu)$ is isomorphic to the direct product $\calg \times \calr_0$.
The groupoid $\calg \times \calr_0$ is associated with the coordinatewise action $G\times \Z \c (X\times X_0, \mu \times \mu_0)$.

We note that this definition of stability of an ergodic p.m.p.\ action is slightly different from Jones-Schmidt's one in \cite[Definition 3.1]{js}.
They define an ergodic p.m.p.\ action to be stable if the associated equivalence relation is isomorphic to its direct product with $\calr_0$.
Under the assumption that the action is free, their definition of stability is equivalent to ours.

\medskip

\noindent {\bf Two kinds of sequences.}
Let $G$ be a discrete group and $\sigma \colon G\c (X, \mu)$ a p.m.p.\ action.
We set $\calg =G\ltimes (X, \mu)$.
A sequence $(A_n)_{n=1}^\infty$ in $\calb$ is called {\it asymptotically invariant (a.i.)} for $\sigma$ if for any $U\in [\calg]$, we have $\mu(U^0A_n\bigtriangleup A_n)\to 0$ as $n\to \infty$.
An a.i.\ sequence $(A_n)_{n=1}^\infty$ for $\sigma$ is called {\it trivial} if we have $\mu(A_n)(1-\mu(A_n))\to 0$ as $n\to \infty$.

A sequence $(U_n)_{n=1}^\infty$ in $[\calg]$ is called {\it asymptotically central (a.c.)} for $\sigma$ if the following conditions (a) and (b) hold:
\begin{enumerate}
\item[(a)] For any $B\in \calb$, we have $\mu(U_n^0B\bigtriangleup B)\to 0$ as $n\to \infty$.
\item[(b)] For any $V\in [\calg]$, we have $\mu(\{\, x\in X\mid (U_n\cdot V)x\neq (V\cdot U_n)x\,\})\to 0$ as $n\to \infty$.
\end{enumerate}
An a.c.\ sequence $(U_n)_{n=1}^\infty$ for $\sigma$ is called {\it trivial} if we have $\mu(U_n^0A_n\bigtriangleup A_n)\to 0$ for any a.i.\ sequence $(A_n)_{n=1}^\infty$ for $\sigma$.

We note that a sequence $(A_n)_{n=1}^\infty$ in $\calb$ is a.i.\ for $\sigma$ if and only if for any $g\in G$, we have $\mu(gA_n\bigtriangleup A_n)\to 0$ as $n\to \infty$.
We also note that a sequence $(U_n)_{n=1}^\infty$ in $[\calg]$ is a.c.\ for $\sigma$ if and only if it satisfies condition (a) in the definition of an a.c.\ sequence and the following condition:
\begin{enumerate}
\item[(c)] For any $g\in G$, we have $\mu(\{\, x\in X\mid (U_n\cdot U_g)x\neq (U_g\cdot U_n)x\,\})\to 0$ as $n\to \infty$, where the map $U_g\in [\calg]$ is defined by $U_g(x)=g$ for any $x\in X$.
\end{enumerate}
These remarks are noticed in \cite[Section 2]{js} and \cite[Remark 3.3]{js}, respectively, when the action $\sigma$ is free.

Let $\calr$ be the equivalence relation associated with $\sigma$.
We define $[\calr]$ as the {\it full group} of $\calr$, i.e., the group of automorphisms of $X$ whose graphs are contained in $\calr$.
Let $\iota \colon [\calg]\to [\calr]$ be the surjective homomorphism defined by $\iota(U)=U^0$ for $U\in [\calg]$.
If the action $\sigma$ is free, then $\iota$ is injective, and thus an isomorphism.

Suppose that the action $\sigma$ is free.
We say that a sequence $(T_n)_{n=1}^\infty$ in $[\calr]$ is {\it asymptotically central (a.c.)} for $\sigma$ if the sequence $(\iota^{-1}(T_n))_{n=1}^\infty$ in $[\calg]$ is a.c.\ for $\sigma$.
We say that an a.c.\ sequence $(T_n)_{n=1}^\infty$ in $[\calr]$ for $\sigma$ is {\it trivial} if the a.c.\ sequence $(\iota^{-1}(T_n))_{n=1}^\infty$ in $[\calg]$ for $\sigma$ is trivial.
The reader should be reminded that under the assumption that the action $\sigma$ is free, this definition of an a.c.\ sequence in $[\calr]$ for $\sigma$ and its triviality is equivalent to that due to Jones-Schmidt (\cite[Definition 3.2]{js}).


\subsection{Characterization}

The aim of this subsection is to show the following:

\begin{thm}\label{thm-js}
An ergodic p.m.p.\ action of a discrete group is stable if and only if the action admits a non-trivial a.c.\ sequence in the full group of the associated groupoid.
\end{thm}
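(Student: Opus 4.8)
I would prove the two implications separately, handling stability $\Rightarrow$ existence of an a.c.\ sequence by an explicit construction, and the converse by adapting the Jones--Schmidt telescoping argument from the full group of an equivalence relation to the full group $[\calg]$ of the groupoid.

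For the forward implication, suppose $\sigma\colon G\c(X,\mu)$ is stable and fix an isomorphism $\calg\cong\calg\times\calr_0=(G\times\Z)\ltimes(X\times X_0)$. It suffices to exhibit a non-trivial a.c.\ sequence in $[\calg\times\calr_0]$ for the product action and transport it through the isomorphism. I would choose the generating $\Z$-action to be the dyadic odometer on $(X_0,\mu_0)=\prod_{k=1}^\infty(\{0,1\},(\tfrac12,\tfrac12))$, so that $\calr_0$ is the tail equivalence relation, and let $\sigma_k\in[\calr_0]$ be the flip of the $k$-th coordinate. Approximating an arbitrary element of $[\calr_0]$ by one depending on only finitely many coordinates shows that $(\sigma_k)_k$ satisfies condition (a) and asymptotically commutes with every element of $[\calr_0]$; meanwhile the sets $B_k=\{\,\omega\in X_0\mid \omega_k=0\,\}$ form an a.i.\ sequence with $\mu_0(\sigma_kB_k\bigtriangleup B_k)=1$, so $(\sigma_k)_k$ is non-trivial. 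Letting $U_k\in[\calg\times\calr_0]$ act as the identity on the $\calg$-factor and as $\sigma_k$ (via the odometer cocycle) on the $\calr_0$-factor, one checks condition (a) on rectangles and extends by density, checks condition (c) against the generators $G\times\Z$ (using that the two factors commute and that $(\sigma_k)_k$ is asymptotically central in $[\calr_0]$), and verifies non-triviality via the a.i.\ sequence $X\times B_k$. This produces the required sequence.

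For the converse, let $(U_n)_n$ be a non-trivial a.c.\ sequence in $[\calg]$; the goal is to build a subgroupoid $\mathcal{S}\subset\calg$ isomorphic to $\calr_0$ that splits $\calg$ as a direct factor. First I would normalize: by non-triviality choose an a.i.\ sequence $(A_n)_n$ with $\mu(U_n^0A_n\bigtriangleup A_n)\not\to0$, and pass to a subsequence so that $\mu(U_n^0A_n\bigtriangleup A_n)\ge\delta>0$ for all $n$; this uniform lower bound is the quantitative input that keeps the construction from degenerating. The core is then a Rokhlin-type lemma in the groupoid full group, modeled on \cite{js}: given a finite subgroupoid $F\subset\calg$, a finite list of elements of $[\calg]$, and $\epsilon>0$, a sufficiently late term of $(U_n)_n$ is used to manufacture an element of $[\calg]$ that approximately commutes with $F$ and with the listed elements (condition (b)/(c)), acts nearly trivially on the unit space (condition (a)), and yet, via the $\delta$-bound together with ergodicity, moves a set approximately independent from $F$. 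Iterating this step with summable errors, I would build an increasing chain of finite subgroupoids $\calg_1\subset\calg_2\subset\cdots$, each obtained from the previous by adjoining an approximately central, approximately independent involution, and each approximately independent from an exhausting sequence of generators of $[\calg]$. A Borel--Cantelli/telescoping argument then upgrades the approximate independence and centrality of the generated hyperfinite subgroupoid $\mathcal{S}=\overline{\bigcup_k\calg_k}$ into an exact direct-product decomposition $\calg\cong\calg\times\mathcal{S}$; since $\mathcal{S}$ is an ergodic hyperfinite equivalence relation of type ${\rm II}_1$, uniqueness (\cite{km}) identifies it with $\calr_0$, giving stability.

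The main obstacle is this last construction: executing the Jones--Schmidt telescoping inside $[\calg]$ rather than inside $[\calr]$. The subtlety is that an element of $[\calg]$ carries a $G$-valued cocycle in addition to a transformation of $X$, so the isotropy arising from non-freeness must be tracked throughout; it is precisely the decision to work with $[\calg]$ and with the groupoid conditions (a), (b), (c) that renders the argument insensitive to freeness. The remaining effort is the careful quantitative bookkeeping needed to pass from approximate to exact independence and centrality in the limit, which is where essentially all of the work of the converse direction lies.
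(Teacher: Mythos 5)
Your ``only if'' direction is correct and is the same as the paper's, which disposes of it in one line because $\calr_0$ admits a non-trivial a.c.\ sequence; your coordinate-flip construction is a standard way to verify this. The genuine gap is in the converse, and it sits exactly where you yourself park ``essentially all of the work'': the passage from approximate to exact. In the Jones--Schmidt scheme that the paper follows, exactness is \emph{not} obtained in the limit; it is enforced at every finite stage. Lemma \ref{lem-invol} first converts the given a.c.\ sequence into exact involutions $U_n$ with $U_n\cdot U_n=I$ and $U_n^0B_n=X\setminus B_n$, and the induction of Lemma \ref{lem-tile} then produces $V_{N+1}$ from a late term $U_k$ by \emph{redefining} $U_k$ on a small exceptional set $E$ (Claim \ref{claim-e}), using an auxiliary involution $W$ and a fundamental domain for $\Gamma_N^0$, so that $V_{N+1}$ commutes exactly with $V_1,\ldots,V_N$, fixes $D_1,\ldots,D_N$ exactly, and exchanges a new set $D_{N+1}$ with its complement exactly, with exact independence as in conditions (\ref{a})--(\ref{c}). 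Your chain cannot even get started as described: if the new involution only \emph{approximately} commutes with the previously built finite subgroupoid $\calg_{k-1}$, then the subgroupoid generated by $\calg_{k-1}$ and that involution need not be finite (two non-commuting involutions already generate an infinite dihedral group), so the objects $\calg_k$ in your increasing chain are not finite subgroupoids, $\mathcal{S}$ need not be hyperfinite, and no Borel--Cantelli argument converts approximate commutation of generators into an exact product splitting. Summable errors do appear in the paper, but only in Lemma \ref{lem-t}, and there each approximant $W_{p,q,r,t}$ already commutes exactly with $\Gamma_t$ and fixes $\alpha_t$ pointwise (conditions (\ref{wf3}) and (\ref{wf4})), which is precisely why the limits $T_m$ commute exactly with all of $\Gamma$.

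Even granting exact finite stages, the splitting itself is a second, independent piece of work that your proposal compresses into one sentence. The paper must construct the countable family $(T_m)$ commuting with $\Gamma$ and fixing $\mathcal{D}$ which together with $\Gamma$ exhausts $\calg$ fiberwise (condition (\ref{i})); show that $\calb$ is generated by $\mathcal{D}$ and $\calb^\Gamma$ and that these are independent (conditions (\ref{j}) and (\ref{k})); deduce via Ramsay's theorem \cite{ramsay} that the joint ergodic decomposition map is a measure isomorphism (Lemma \ref{lem-b}); and, because stability here means an isomorphism of \emph{groupoids} $\calg\cong\calg\times\calr_0$ (the action need not be free), assemble the isomorphism through normal subgroupoids, quotients and class-surjective homomorphisms ($\mathcal{N}_1$, $\mathcal{N}_2$, $F=(F_1,F_2)$, with $\mathcal{Q}_1\cong\calr_0$ and $\mathcal{N}_1\cap\mathcal{N}_2$ trivial), rather than through equivalence relations. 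Your closing remark that the isotropy ``must be tracked throughout'' names this issue but does not address it. So while your overall strategy is the right one---it is Jones--Schmidt's, and the paper's---the core mechanism (stagewise exactification by modification on small sets, followed by the construction of the commuting complement and the measure-isomorphism and quotient-groupoid arguments) is missing rather than merely deferred.
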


The theorem for a free action is due to Jones-Schmidt (\cite[Theorem 3.4]{js}).
The proof for a general case is essentially the same as theirs.
In fact, most part of our proof is verbatim after exchanging symbols of equivalence relations in their proof, into those of groupoids appropriately.
The exchanging is however not straightforward in several places.
To clarify the exchanging, we decide to write down a proof of Theorem \ref{thm-js}, using the same symbols as those in their proof whenever possible.

\begin{proof}[Proof of Theorem \ref{thm-js}]
The ``only if" part holds because the ergodic hyperfinite equivalence relation $\calr_0$ of type ${\rm II}_1$ admits a non-trivial a.c.\ sequence.
We prove the ``if" part.
Let $G$ be a discrete group and $\sigma \colon G\c (X, \mu)$ an ergodic p.m.p.\ action.
Let $\calb$ be the algebra of measurable subsets of $X$.
Set $\calg =G\ltimes (X, \mu)$.
We have a sequence $(V_n)_{n=1}^\infty$ in $[\calg]$ and an a.i.\ sequence $(A_n)_{n=1}^\infty$ for $\sigma$ satisfying the following conditions (\ref{ac1})--(\ref{ac3}):
\begin{equation}\label{ac1}
\textrm{For any $A\in \calb$, we have $\mu(V_n^0A\bigtriangleup A)\to 0$ as $n\to \infty$.}
\end{equation}
\begin{equation}\label{ac2}
\textrm{For any $U\in [\calg]$, we have $\mu(\{\, x\in X\mid (U\cdot V_n)x\neq (V_n\cdot U)x\,\})\to 0$ as $n\to \infty$.}
\end{equation}
\begin{equation}\label{ac3}
\textrm{The measure $\mu(V_n^0A_n\bigtriangleup A_n)$ does not converge to 0 as $n\to \infty$.}
\end{equation}
Taking a subsequence, we may assume that there exists a positive number $c$ such that
\[\mu(A_n\setminus V_n^0A_n)=\mu(V_n^0A_n\bigtriangleup A_n)/2\to c\ \textrm{as}\ n\to \infty.\]
We prove two lemmas to get an a.i.\ and a.c.\ sequence enjoying nice properties.
In the rest of the proof of Theorem \ref{thm-js}, we mean by an a.c.\ sequence for $\sigma$ that in $[\calg]$ unless otherwise mentioned.

\begin{lem}\label{lem-invol}
There exist an a.i.\ sequence $(B_n)_{n=1}^\infty$ for $\sigma$ and an a.c.\ sequence $(U_n)_{n=1}^\infty$ for $\sigma$ such that for any $n\in \N$, we have $U_n\cdot U_n=I$ and $U_n^0B_n=X\setminus B_n$.
\end{lem}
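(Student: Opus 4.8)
The plan is to convert the given non-trivial asymptotically central sequence $(V_n)$, together with its witnessing a.i.\ sequence $(A_n)$ satisfying $\mu(A_n\setminus V_n^0A_n)\to c>0$, into a sequence of \emph{involutions} that interchange a half-measure set with its complement. It is worth noting at the outset that the required identity $U_n^0B_n=X\setminus B_n$ forces $U_n^0$ to be fixed-point-free and $\mu(B_n)=1/2$; thus, unlike $(V_n)$, which only displaces the fraction $c$ of the set $A_n$, the transformations $U_n^0$ must move almost every point while remaining asymptotically central. The whole difficulty lies in reconciling these two demands, and the construction below does so by spreading the displacement of $V_n^0$ over all of $X$.

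The basic mechanism I would use is a \emph{local flip}. Put $G_n=A_n\setminus (V_n^0)^{-1}A_n$ and $F_n=V_n^0G_n\subseteq X\setminus A_n$, so that $\mu(G_n)=\mu(F_n)=\mu(A_n\setminus V_n^0A_n)\to c$, and define $W_n\in[\calg]$ by $W_n(x)=V_n(x)$ on $G_n$, $W_n(x)=V_n^\dashv(x)$ on $F_n$, and $W_n(x)=e$ elsewhere. Then $W_n^0$ interchanges $G_n$ and $F_n$ through $V_n^0$ and fixes the rest, and the cocycle computation $W_n(W_n^0(x))=V_n^\dashv(V_n^0(x))=V_n(x)^{-1}$ on $G_n$ (and its mirror on $F_n$) yields $W_n\cdot W_n=I$. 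The decisive point is that $W_n^0$ agrees with $V_n^0$ or with $(V_n^0)^{-1}$ wherever it is non-trivial; hence for a \emph{fixed} $B\in\calb$ the set $W_n^0B\bigtriangleup B$ is contained in $(B\setminus (V_n^0)^{-1}B)\cup(B\setminus V_n^0B)$, whose measure is at most $2\mu(V_n^0B\bigtriangleup B)\to0$ by \eqref{ac1}. Together with \eqref{ac2}, applied to the maps $U_g$, this shows that such piecewise-$V_n$ maps inherit the a.c.\ conditions (a) and (c) from $(V_n)$.

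To upgrade this to a fixed-point-free, half-swapping involution I would use a Rokhlin tower. Reducing to the aperiodic part of $V_n^0$, I would choose for each $n$ a base $S_n$ and an even height $h_n\to\infty$ so that the levels $(V_n^0)^jS_n$, $0\le j<h_n$, are disjoint and exhaust all but measure $\varepsilon_n\to0$ of $X$. Define $U_n^0$ to pair each even level $(V_n^0)^{2j}S_n$ with the next level $(V_n^0)^{2j+1}S_n$ through $V_n^0$, exactly as in the local flip but now across consecutive rungs, and label the crossings by the corresponding values of $V_n$ so that $U_n\cdot U_n=I$ holds in $[\calg]$. This $U_n^0$ is fixed-point-free on the tower, and setting $B_n$ to be the union of the even levels (adjusted on the error set to make $\mu(B_n)=1/2$ exactly) gives $U_n^0B_n=X\setminus B_n$. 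Since $U_n^0$ again coincides with $V_n^0$ or $(V_n^0)^{-1}$ off the error set, the estimate of the previous paragraph shows that $(U_n)$ satisfies conditions (a) and (c), hence is a.c.

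The step I expect to be the main obstacle is the remaining requirement that the swapped sets $(B_n)$ themselves form an a.i.\ sequence. Because $(U_n)$ is forced to move essentially all of $X$, the natural candidate $B_n$—the even levels of the tower—need not be nearly $G$-invariant, and making it so requires arranging the tower decomposition to be \emph{asymptotically $G$-equivariant}. This is precisely where the asymptotic centrality of $(V_n)$ (the near-commutation of $V_n$ with every $g\in G$, forcing $g$ to nearly preserve the level structure) and the near-invariance of the base under the action must be exploited, and where the bookkeeping of the $G$-valued labels—keeping $U_n$ an honest groupoid involution rather than merely $U_n^0$ an involution of $X$—is least straightforward. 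I would therefore expect the technical heart of the argument to be the simultaneous verification of fixed-point-freeness, of $\mu(B_n)=1/2$ with $(B_n)$ a.i., and of the exact relation $U_n\cdot U_n=I$, handled by refining the choice of $S_n$ compatibly with $A_n$ and passing to a subsequence.
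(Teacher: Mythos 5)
Your second paragraph (the local flip) is essentially Step 1 of the paper's own proof: the paper sets $E_n=A_n\setminus V_n^0A_n$ and defines $W_n$ to be $V_n$ on $E_n$, $V_n^\dashv$ on $V_n^0E_n$, and $e$ elsewhere, exactly as you do, and this part of your argument is correct. Note, however, \emph{why} condition (c) passes to $W_n$: it is not enough that $W_n$ is piecewise $V_n^{\pm1}$-or-identity; one also needs each piece of the decomposition ($G_n$, $F_n$, and their complement) to be nearly $g$-invariant for every $g\in G$, which holds here because these pieces are boolean combinations of the a.i.\ sets $A_n$ and their images under maps nearly commuting with $g$. Losing track of this is what breaks your third paragraph, and the Rokhlin-tower upgrade has two genuine gaps. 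First, $V_n^0$ need not have a substantial aperiodic part: nothing in \eqref{ac1}--\eqref{ac3} prevents $V_n^0$ from being periodic or from fixing a set of large measure --- indeed the flips $W_n$ you just built are themselves involutions whose fixed-point set has measure about $1-2c$, so no towers of height $h_n\to\infty$ exist for them. Worse, fixed-point-freeness is outright incompatible with your structural constraint: wherever $V_n^0x=x$ on a set of positive measure, any map that ``coincides with $V_n^0$ or $(V_n^0)^{-1}$ wherever it is non-trivial'' also fixes those points, so a $U_n^0$ of this form with $U_n^0B_n=X\setminus B_n$ simply cannot exist. Second, even on an aperiodic part, the tower-based $U_n$ need not satisfy condition (c): whether $U_n(gx)=gU_n(x)g^{-1}$ depends on $x$ and $gx$ lying in levels of the same parity, so (c) forces the union of even levels to be nearly $g$-invariant --- precisely the property you defer to the last paragraph as ``the main obstacle.'' That obstacle therefore does not only threaten the a.i.-ness of $(B_n)$; it invalidates asymptotic centrality of $(U_n)$ itself, and tower levels, having no relation to the sets $A_n$, come with no mechanism to acquire it.

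The idea you are missing is the paper's amplification step, which replaces the tower entirely. Using ergodicity of $\sigma$, choose $k_n\gg n$ so that $E_{k_n}$ is nearly independent of $E_n\cup W_n^0E_n$, and, using condition (a) for $W_{k_n}$ applied to the \emph{fixed} set $E_n\cup W_n^0E_n$, so that this set is nearly $W_{k_n}^0$-invariant; gluing $W_n$ and $W_{k_n}$ on disjoint regions then yields an involution, still a.c.\ and with all pieces built from a.i.\ sets, which swaps a set of measure tending to $2c-2c^2$. Iterating $d\mapsto 2d-2d^2\nearrow 1/2$ produces a.c.\ involutions $S_n$ and a.i.\ sets $C_n$ with $1/2-2^{-n}<\mu(C_n)\leq 1/2$ and $S_n^0C_n\cap C_n=\emptyset$. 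Finally one chooses $B_n\supseteq C_n$ of measure exactly $1/2$ with $B_n\cap S_n^0C_n=\emptyset$ and extends $S_n$, by ergodicity, to an involution $U_n$ matching $B_n\setminus C_n$ with $(X\setminus B_n)\setminus S_n^0C_n$. This last extension, performed on a set of measure $<2^{-n+1}$, is where fixed-point-freeness and the exact half-swap come from --- not from the $V_n$'s at all --- and since the modification is asymptotically negligible, $(U_n)$ stays a.c.\ while $(B_n)$, differing from $(C_n)$ by measure $<2^{-n}$, is automatically a.i. In short, the displacement is pushed up to $1/2$ by combining flips at widely separated indices, with ergodicity supplying both the near-independence and the final arbitrary filler, rather than by iterating a single $V_n$ along an orbit tower.
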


\begin{proof}
The proof consists of three steps.

\medskip

\noindent {\bf Step 1.}
We construct an a.i.\ sequence $(E_n)_{n=1}^\infty$ for $\sigma$ and an a.c.\ sequence $(W_n)_{n=1}^\infty$ for $\sigma$ such that we have $\mu(E_n)\to c$ as $n\to \infty$; and for any $n\in \N$, we have $W_n\cdot W_n=I$ and $W_n^0E_n\cap E_n=\emptyset$.

\medskip

For $n\in \N$, we set $E_n=A_n\setminus V_n^0A_n$.
By condition (\ref{ac2}), the sequence $(E_n)_n$ is a.i.\ for $\sigma$.
Define $W_n\in [\calg]$ by
\[W_nx=\begin{cases}
V_nx & \textrm{if}\ x\in E_n\\
(V_n)^\dashv x & \textrm{if}\ x\in V_n^0E_n\\
e & \textrm{otherwise},
\end{cases}\]
where $e$ is the neutral element of $G$ (see Figure \ref{fig-wn} (a)).
\begin{figure}
\begin{center}
\includegraphics[width=12cm]{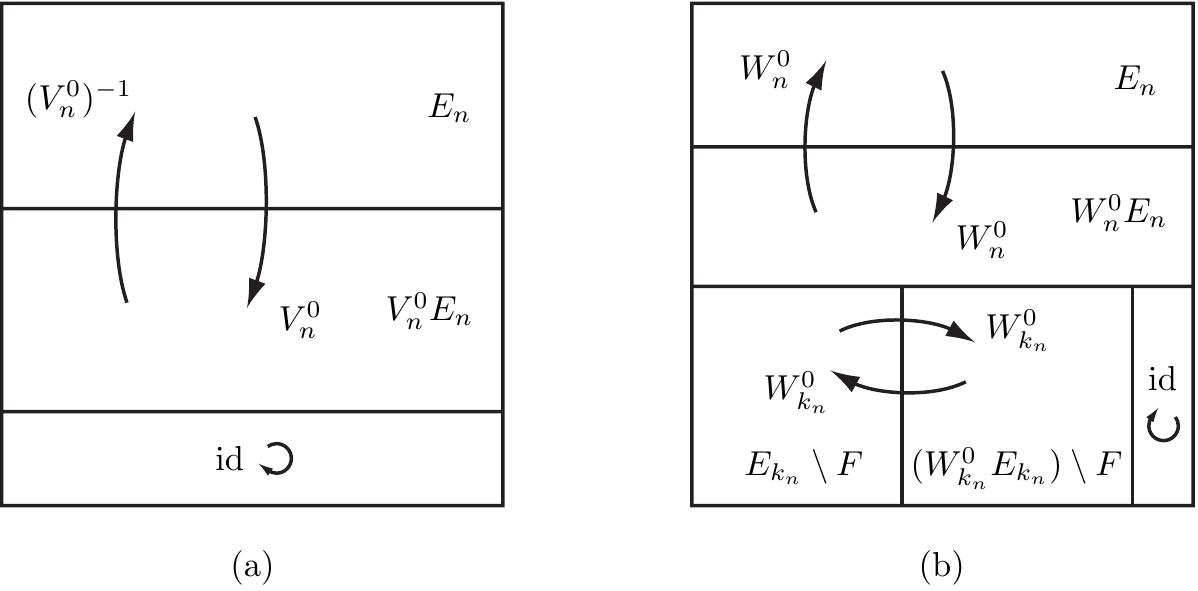}
\caption{(a) A description of $W_n^0$ with the whole square indicating $X$. (b) A description of $\tilde{W}_n^0$ with $F=(E_n\cup W_n^0E_n)\cup W_{k_n}^0(E_n\cup W_n^0E_n)$.}\label{fig-wn}
\end{center}
\end{figure}
The sequence $(W_n)_n$ is then a.c.\ for $\sigma$.
The properties in the last paragraph can also be checked.
Step 1 is completed.

\medskip

\noindent {\bf Step 2.}
We construct an a.i.\ sequence $(\tilde{E}_n)_{n=1}^\infty$ for $\sigma$ and an a.c.\ sequence $(\tilde{W}_n)_{n=1}^\infty$ for $\sigma$ such that we have $\mu(\tilde{E}_n)\to 2c-2c^2$ as $n\to \infty$; and for any $n\in \N$, we have $\tilde{W}_n\cdot \tilde{W}_n=I$ and $\tilde{W}_n^0\tilde{E}_n\cap \tilde{E}_n=\emptyset$.

\medskip

For any $n\in \N$, we can find $k_n\in \N$ with $k_n>n$ and the following inequalities (\ref{ichi})--(\ref{san}):
\begin{equation}\label{ichi}
|\mu(E_n\cap E_{k_n})-c\mu(E_n)|<\frac{1}{\, 2^n}.
\end{equation}
\begin{equation}\label{ni}
|\mu(W_n^0E_n\cap E_{k_n})-c\mu(W_n^0E_n)|<\frac{1}{\, 2^n}.
\end{equation}
\begin{equation}\label{san}
\mu((E_n\cup W_n^0E_n)\bigtriangleup W_{k_n}(E_n\cup W_n^0E_n))<\frac{1}{\, 2^n}.
\end{equation}
Inequalities (\ref{ichi}) and (\ref{ni}) follow from that $\sigma$ is ergodic and we have $\mu(E_n)\to c$ as $n\to \infty$ (see the proof of \cite[Lemma 2.3]{js}).
Inequality (\ref{san}) follows from condition (\ref{ac1}) for the a.c.\ sequence $(W_n)_n$ in place of $(V_n)_n$.
For $n\in \N$, we set
\[\tilde{E}_n=E_n\cup (E_{k_n}\setminus ((E_n\cup W_n^0E_n)\cup W_{k_n}(E_n\cup W_n^0E_n))),\]
and define $\tilde{W}_n\in [\calg]$ by
\[\tilde{W}_nx=\begin{cases}
W_nx & \textrm{if}\ x\in E_n\cup W_n^0E_n\\
W_{k_n}x & \textrm{if}\ x\in (E_{k_n}\cup W_{k_n}^0E_{k_n})\setminus ((E_n\cup W_n^0E_n)\cup W_{k_n}^0(E_n\cup W_n^0E_n))\\
e & \textrm{otherwise}
\end{cases}\]
(see Figure \ref{fig-wn} (b)).
We then see that $(\tilde{E}_n)_n$ is a.i.\ for $\sigma$; $(\tilde{W}_n)_n$ is a.c.\ for $\sigma$; and for any $n\in \N$, we have $\tilde{W}_n\cdot \tilde{W}_n=I$ and $\tilde{W}_n^0\tilde{E}_n\cap \tilde{E}_n=\emptyset$.
We have $\mu(\tilde{E_n})\to 2c-2c^2$ as $n\to \infty$ by inequalities (\ref{ichi})--(\ref{san}).
Step 2 is completed.

\medskip

\noindent {\bf Step 3.}
We construct an a.i.\ sequence $(B_n)_{n=1}^\infty$ for $\sigma$ and an a.c.\ sequence $(U_n)_{n=1}^\infty$ for $\sigma$ such that for any $n\in \N$, we have $U_n\cdot U_n=I$ and $U_n^0B_n=X\setminus B_n$.

\medskip

For a real number $d$ with $0<d<1/2$, define a sequence $(d_n)_{n=1}^\infty$ of real numbers by $d_1=d$ and $d_{n+1}=2d_n-2d_n^2$.
We then have $d_n\nearrow 1/2$ as $n\to \infty$.
It follows from Step 2 that we can find an a.i.\ sequence $(C_n)_{n=1}^\infty$ for $\sigma$ and an a.c.\ sequence $(S_n)_{n=1}^\infty$ for $\sigma$ such that for any $n\in \N$, we have
\[\frac{1}{\, 2\, }-\frac{1}{\, 2^n}<\mu(C_n)\leq \frac{1}{\, 2\, },\quad S_n^0C_n\cap C_n=\emptyset \quad \textrm{and}\quad S_n\cdot S_n=I.\]
For $n\in \N$, we choose $B_n\in \calb$ with $\mu(B_n)=1/2$, $C_n\subset B_n$ and $B_n\cap S_n^0C_n=\emptyset$.
We also choose $U_n\in [\calg]$ such that $U_n|_{C_n}=S_n$, $U_n\cdot U_n=I$ and $U_n^0B_n=X\setminus B_n$.
This $U_n$ can be chosen because the action $\sigma$ is ergodic.
The sequences $(B_n)_n$ and $(U_n)_n$ are desired ones.
Step 3 is completed, and Lemma \ref{lem-invol} is therefore proved.
\end{proof}

\begin{lem}\label{lem-tile}
There exist an a.i.\ sequence $(D_n)_{n=1}^\infty$ for $\sigma$ and an a.c.\ sequence $(V_n)_{n=1}^\infty$ for $\sigma$ satisfying the following conditions {\rm (\ref{a})--(\ref{c})}:
\begin{equation}\label{a}
\textrm{For any $n\in \N$, we have $V_n\cdot V_n=I$ and $V_n^0D_n=X\setminus D_n$.}
\end{equation}
\begin{equation}\label{b}
\textrm{For any distinct $n, m\in \N$, we have $V_n\cdot V_m=V_m\cdot V_n$ and $V_n^0D_m=D_m$.}
\end{equation}
\begin{eqnarray}\label{c}
\qquad \textrm{For any $k\in \N$ and any mutually distinct $n_1,\ldots, n_k\in \N$,}\\ 
\textrm{we have $\mu(D_{n_1}\cap \cdots \cap D_{n_k})=1/2^k$.}\quad \nonumber
\end{eqnarray}
\end{lem}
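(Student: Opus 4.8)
The plan is to build the sequences $(D_n)$ and $(V_n)$ by induction on $n$, feeding in far-out terms of the a.i.\ sequence $(B_n)$ and the a.c.\ sequence $(U_n)$ furnished by Lemma \ref{lem-invol} and correcting them so that the \emph{exact} algebraic relations (\ref{a})--(\ref{c}) hold. Suppose that $V_1,\ldots,V_{n-1}$ and $D_1,\ldots,D_{n-1}$ have already been produced, with $V_1,\ldots,V_{n-1}$ mutually commuting fixed-point-free involutions such that $V_i^0$ swaps $D_i$ and fixes every $D_j$ with $j\neq i$, and with $D_1,\ldots,D_{n-1}$ mutually independent of measure $1/2$. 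The finite group $\Lambda=\langle V_1,\ldots,V_{n-1}\rangle\cong(\Z/2\Z)^{n-1}$ then acts by sign flips on the $2^{n-1}$ atoms of the algebra generated by $D_1,\ldots,D_{n-1}$, and this action is simply transitive. I fix the base atom $\Omega=D_1\cap\cdots\cap D_{n-1}$, so that $X=\bigsqcup_{\gamma\in\Lambda}\gamma^0\Omega$ and $\mu(\Omega)=1/2^{n-1}$ (for $n=1$ this is vacuous, $\Lambda$ trivial and $\Omega=X$).

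At the inductive step I choose an index $m=m_n$, with $m_n\to\infty$, so large that $U_m$ approximately commutes with each of the $2^{n-1}$ elements $\gamma\in\Lambda$ and $B_m$ is approximately $\Lambda$-invariant, with a common error $\varepsilon_n$ satisfying $2^n\varepsilon_n\to 0$; this is possible because at this stage $\Lambda$ is a \emph{fixed} finite group, while $(U_n)$ is a.c.\ and $(B_n)$ is a.i. Condition (a) for the a.c.\ sequence $(U_n)$, applied to the fixed set $\Omega$, gives $\mu(U_m^0\Omega\bigtriangleup\Omega)\to 0$, so I may correct $U_m^0$ on this small symmetric difference, using the non-atomicity of $\Omega$, to an honest fixed-point-free involution $\tau$ of $\Omega$ agreeing with $U_m^0$ off a set of measure tending to $0$. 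Since $U_m^0$ swaps $B_m$ with its complement, $\tau$ nearly swaps $B_m\cap\Omega$ with $\Omega\setminus B_m$, so I can take $D_n\cap\Omega$ to be a measurable transversal for the $\tau$-orbits that differs from $B_m\cap\Omega$ by a set of measure $\to 0$, and set $V_n|_\Omega=\tau$ (lifted to a $G$-valued cocycle in $[\calg]$). Finally I spread $V_n$ and $D_n$ over $X$ by $\Lambda$-equivariance, putting $V_n|_{\gamma^0\Omega}=\gamma\cdot(V_n|_\Omega)\cdot\gamma^\dashv$ and $D_n\cap\gamma^0\Omega=\gamma^0(D_n\cap\Omega)$ for $\gamma\in\Lambda$; as each atom is reached by a unique $\gamma$, this is well defined.

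By construction $V_n\cdot V_n=I$, while $V_n^0$ preserves every atom and hence fixes $D_1,\ldots,D_{n-1}$, and swaps $D_n$ inside each atom; this yields (\ref{a}) and the part $V_n^0D_m=D_m$ $(m<n)$ of (\ref{b}). The equivariant definition forces $V_n$ to commute with each $V_i$ $(i<n)$ and forces $V_i^0D_n=D_n$, so (\ref{b}) holds in full (the two relations for a pair $\{n,m\}$ being secured when the larger index is constructed). Because $V_n^0$ is a free involution interchanging $D_n\cap a$ with $a\setminus D_n$ inside each atom $a$, we get $\mu(D_n\cap a)=\tfrac12\mu(a)$, i.e.\ $D_n$ is independent of $D_1,\ldots,D_{n-1}$, and inductively this gives (\ref{c}). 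Finally, the near $\Lambda$-invariance of $B_m$ gives $\mu(D_n\bigtriangleup B_{m_n})\to 0$ and the near $\Lambda$-commutation of $U_m$ gives $\mu(\{x:V_n(x)\neq U_{m_n}(x)\})\to 0$ (the per-element errors summed over the $2^{n-1}$ elements of $\Lambda$, which is why $2^n\varepsilon_n\to 0$ is imposed); hence the a.i.\ property of $(B_{m_n})$ passes to $(D_n)$ and the a.c.\ property of $(U_{m_n})$ passes to $(V_n)$.

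The main obstacle is exactly this inductive step: converting the merely asymptotic commutation and invariance coming from Lemma \ref{lem-invol} into the exact relations (\ref{a})--(\ref{c}) \emph{without} destroying the asymptotic properties. The correction of $U_m^0$ to a genuine fixed-point-free involution of $\Omega$ and the $\Lambda$-equivariant spreading must be arranged so that the total perturbation, accumulated over the $2^{n-1}$ group elements, still tends to $0$, which dictates choosing $m_n$ growing fast enough that $\varepsilon_n$ beats $2^{-n}$. The remaining points—that the spread maps lie in $[\calg]$, that the required measurable transversals exist, and that mutual independence follows from halving each atom—are routine measure-theoretic bookkeeping.
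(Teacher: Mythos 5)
Your construction is correct, and it shares the paper's inductive skeleton: build $(D_n,V_n)$ one index at a time, exploit the fact that $\Gamma_{n-1}=\langle V_1,\ldots,V_{n-1}\rangle\cong(\Z/2\Z)^{n-1}$ acts simply transitively on the $2^{n-1}$ atoms of the algebra generated by $D_1,\ldots,D_{n-1}$, pick a far-out index from the sequences of Lemma \ref{lem-invol} so that approximate commutation and approximate invariance hold with per-element error small against $2^{n}$, and spread data $\Gamma_{n-1}$-equivariantly from the base atom $A_0=D_1\cap\cdots\cap D_{n-1}$. Where you genuinely diverge is in the mechanism that turns approximate relations into exact ones. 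The paper keeps $U_k$ itself on most of $X$: it forms the bad set $E$ where $U_k$ fails to commute with $\Gamma_N$ or to preserve the $D_j$, proves in Claim \ref{claim-e} that $E$ is invariant under both $\Gamma_N$ and $U_k$ (the delicate step), and performs surgery only on $E$ by equivariantly spreading an auxiliary involution of $A_0\cap E$; exact commutation then holds because off $E$ nothing was changed and on $E$ everything is equivariant by fiat. You instead discard $U_{m_n}$ outside the base atom and rebuild $V_n$ globally as $\gamma\cdot(V_n|_\Omega)\cdot\gamma^\dashv$ on each atom $\gamma^0\Omega$, so that commutation, conditions (\ref{a})--(\ref{b}), and the halving of every atom needed for (\ref{c}) hold automatically by construction; the burden shifts entirely to the quantitative estimate that $\mu(\{x\mid V_nx\neq U_{m_n}x\})$ and $\mu(D_n\bigtriangleup B_{m_n})$ are sums of $2^{n-1}$ per-atom errors, which is exactly why you impose $2^n\varepsilon_n\to 0$ (the paper's conditions (\ref{d})--(\ref{f}), with tolerance $1/4^{N+1}$ per group element, play the identical role). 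Your route buys a simpler logical structure, with no analogue of Claim \ref{claim-e} needed; the paper's route keeps $V_{N+1}$ literally equal to $U_k$ off a set of measure $<1/2^{N+1}$, so the transfer of the a.c.\ property is immediate rather than requiring the conjugation estimate $\gamma\cdot U_{m_n}\cdot\gamma^\dashv\approx U_{m_n}$. One point of precision: your correction of $U_{m_n}^0$ on $\Omega$ to an honest fixed-point-free involution, the auxiliary involution on the leftover piece $\Omega\setminus U_{m_n}^0\Omega$, and the adjusted transversal must all be realized inside $[\calg]$, that is, by measurable $G$-valued maps moving points within their orbits; this is guaranteed by ergodicity of $\sigma$ (the same point at which the paper picks $W\in[\calg]$ with prescribed behavior, and Step 3 of Lemma \ref{lem-invol} invokes ergodicity), not by non-atomicity of $\Omega$ alone, so you should cite ergodicity there.
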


\begin{proof}
We construct $D_n$ and $V_n$ inductively.
Let $(B_n)_{n=1}^\infty \subset \calb$ and $(U_n)_{n=1}^\infty \subset [\calg]$ be the sequences in Lemma \ref{lem-invol}.
We set $D_1=B_1$ and $V_1=U_1$.

Suppose that we have $D_1,\ldots, D_N$ and $V_1,\ldots, V_N$ satisfying conditions (\ref{a})--(\ref{c}) for them.
We now construct $D_{N+1}$ and $V_{N+1}$.
Let $\Gamma_N$ denote the subgroup of $[\calg]$ generated by $V_1,\ldots, V_N$, which is isomorphic to the direct product of $N$ copies of $\Z /2\Z$.
Since $(B_n)_n$ is a.i.\ for $\sigma$ and $(U_n)_n$ is a.c.\ for $\sigma$, there exists $k\in \N$ with $k>N$ satisfying the following conditions (\ref{d})--(\ref{f}):
\begin{equation}\label{d}
\textrm{For any $V\in \Gamma_N$, we have $\mu(\{ \, x\in X\mid (U_k\cdot V)x\neq (V\cdot U_k)x\, \})<\frac{1}{\, 4^{N+1}}$,}
\end{equation}
\begin{equation}\label{e}
\textrm{For any $j=1,\ldots, N$, we have $\mu(U_k^0D_j\bigtriangleup D_j)<\frac{1}{\, 4^{N+1}}$,}
\end{equation}
\begin{equation}\label{f}
\textrm{For any $V\in \Gamma_N$, we have $\mu(V^0B_k\bigtriangleup B_k)<\frac{1}{\, 4^{N+1}}$}.
\end{equation}
We set
\[E_1=\bigcup_{V\in \Gamma_N}\{ \, x\in X\mid (U_k\cdot V)x\neq (V\cdot U_k)x\, \} \quad \textrm{and}\quad E_2=\bigcup_{j=1}^N(U_k^0D_j\bigtriangleup D_j).\]

\begin{claim}\label{claim-e}
We set $E=E_1\cup E_2$.
Then the following assertions hold:
\begin{enumerate}
\item[(i)] For any $T\in \Gamma_N$, we have $T^0E=E$.
\item[(ii)] We have $U_k^0E=E$ and $\mu(E)<1/2^{N+1}$.
\end{enumerate}
\end{claim}

\begin{proof}
We prove assertion (i).
Fix $T\in \Gamma_N$.
We first prove the equation $T^0E_1=E_1$, and then prove the equation $T^0E=E$.
For any $x\in X\setminus E_1$ and any $V\in \Gamma_N$, we have
\begin{align*}
((U_k\cdot V)(T^0x))(Tx) & =(U_k\cdot V\cdot T)x=(V\cdot T\cdot U_k)x=(V \cdot U_k\cdot T)x\\
& =((V\cdot U_k)(T^0x))(Tx)
\end{align*}
because $V\cdot T$ belongs to $\Gamma_N$.
We thus have $T^0x\in X\setminus E_1$, and have $T^0E_1=E_1$.

Pick $y\in X\setminus E$.
The equation $T^0E_1=E_1$ implies $T^0y\in X\setminus E_1$.
If we had $T^0y\in E_2$, then there would exist $j=1,\ldots, N$ with $T^0y\in U_k^0D_j\bigtriangleup D_j$.
By conditions (\ref{a}) and (\ref{b}) for $V_1,\ldots, V_N$ and $D_1,\ldots, D_N$, either $T^0D_j=D_j$ or $T^0D_j=X\setminus D_j$ holds.

We first assume $T^0y\in U_k^0D_j\setminus D_j$, and deduce a contradiction.
The equation $(U_k\cdot T)y=(T\cdot U_k)y$ holds because $y\in X\setminus E_1$.
We have $(U_k^0\circ T^0\circ U_k^0)y=T^0y\in U_k^0D_j$, and thus $y\in (U_k^0\circ T^0)D_j$.
It follows that we have $y\in U_k^0D_j$ if $T^0D_j=D_j$, and we have $y\in X\setminus U_k^0D_j$ otherwise.
It follows from $T^0y\in X\setminus D_j$ that we have $y\in X\setminus D_j$ if $T^0D_j=D_j$, and we have $y\in D_j$ otherwise.
In either case, we have $y\in U_k^0D_j\bigtriangleup D_j$, and thus $y\in E_2$.
This is a contradiction.
Assuming $T^0y\in D_j\setminus U_k^0D_j$ in place of the condition $T^0y\in U_k^0D_j\setminus D_j$, we can deduce a contradiction similarly.

We proved the equation $T^0E=E$.
Assertion (i) follows.

We prove assertion (ii).
By the equation $U_k\cdot U_k=I$, we have $U_k^0E_1=E_1$ and $U_k^0E_2=E_2$.
The inequality $\mu(E)<1/2^{N+1}$ follows from inequalities (\ref{d}) and (\ref{e}).
Assertion (ii) follows.
\end{proof}

Claim \ref{claim-e} (ii) asserts that the set $E$ is very small.
We will define $V_{N+1}\in [\calg]$ so that it is equal to $U_k$ on the complement $X\setminus E$, on which $U_k$ and any $V\in \Gamma_N$ commute.
To get desired properties of $V_{N+1}$, we modify the map $U_k$ on $E$.

We set $A_0=D_1\cap \cdots \cap D_N$.
Pick $W\in [\calg]$ and a measurable subset $\tilde{B}$ of $A_0\cap E$ such that
\[W\cdot W=I,\quad W^0(A_0\cap E)=A_0\cap E\quad \textrm{and}\quad W^0\tilde{B}=(A_0\cap E)\setminus \tilde{B}\]
(see Figure \ref{fig-aw}).
\begin{figure}
\begin{center}
\includegraphics[width=12cm]{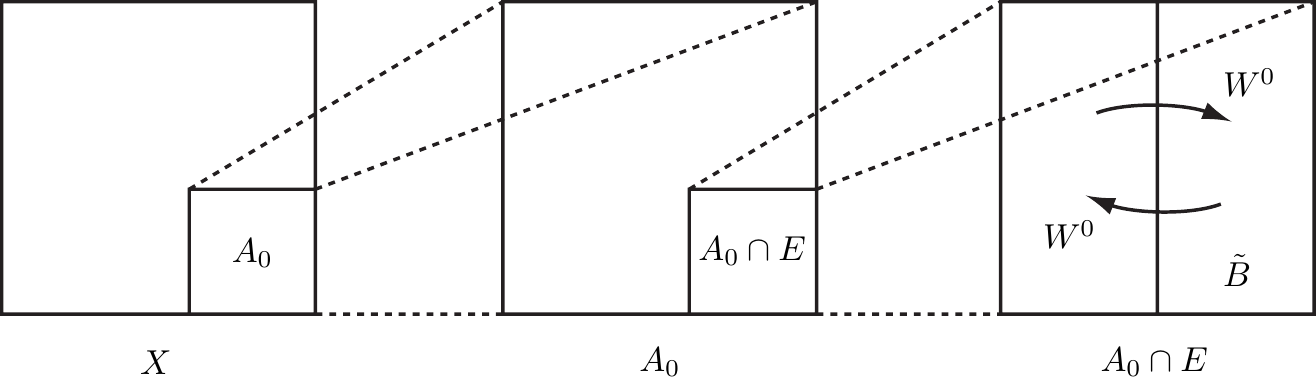}
\caption{}\label{fig-aw}
\end{center}
\end{figure}
Let $\Gamma_N^0$ denote the group of automorphisms $T^0$ of $X$ with $T\in \Gamma_N$.
We set $\tilde{B}_k=\Gamma_N^0(B_k\cap A_0)$, and set
\[D_{N+1}=(\tilde{B}_k\setminus E)\cup \Gamma_N^0\tilde{B}.\]
We define $V_{N+1}\in [\calg]$ by
\[V_{N+1}x=\begin{cases}
U_kx & \textrm{if}\ x\in X\setminus E\\
(V\cdot W\cdot V)x & \textrm{if}\ x\in V^0(A_0\cap E)\ \textrm{for\ some}\ V\in \Gamma_N.
\end{cases}\]
Inequality (\ref{f}) implies that
\[\mu(\tilde{B}_k\bigtriangleup B_k)\leq \sum_{V\in \Gamma_N}\mu(V^0B_k\bigtriangleup B_k)<\frac{1}{\, 2^{N+1}},\]
and thus
\begin{equation}\label{g}
\mu(D_{N+1}\bigtriangleup B_k)\leq \mu(D_{N+1}\bigtriangleup \tilde{B}_k)+\mu(\tilde{B}_k\bigtriangleup B_k)<2\mu(E)+\frac{1}{\, 2^{N+1}}<\frac{1}{\, 2^{N-1}}.
\end{equation}
By the definition of $V_{N+1}$, we see that $V_{N+1}\cdot V_{N+1}=I$ and that for any $m=1,\ldots, N$, we have $V_{N+1}\cdot V_m=V_m\cdot V_{N+1}$.
Since each of the sets $\tilde{B}_k$, $E$ and $\Gamma_N^0\tilde{B}$ is invariant under any element of $\Gamma_N^0$, for any $m=1,\ldots, N$, we have $V_m^0D_{N+1}=D_{N+1}$.

We next show the equation $V_{N+1}^0D_{N+1}=X\setminus D_{N+1}$.
We set
\[Y=X\setminus E\quad \textrm{and}\quad C=\tilde{B}_k\setminus E.\]
The automorphism $V_{N+1}^0$ of $X$ preserves each of $Y$ and $E$, and the inclusions $C\subset Y$ and $\Gamma_N^0\tilde{B}\subset E$ hold.
By the definition of $V_{N+1}^0$, the equation $V_{N+1}^0(\Gamma_N^0\tilde{B})=E\setminus (\Gamma_N^0\tilde{B})$ holds.
It suffices to show the equation $V_{N+1}^0C=Y\setminus C$.

For any $j=1,\ldots, N$, the inclusion $U_k^0D_j\setminus E=U_k^0(D_j\setminus E)\subset D_j\setminus E$ holds, where the first equation follows from Claim \ref{claim-e} (ii), and the last inclusion holds because $E_2\subset E$.
Since $U_k^0\circ U_k^0$ is the identity on $X$, we obtain the equation $U_k^0D_j\setminus E=D_j\setminus E$.
It follows that $U_k^0$ preserves $A_0\setminus E=A_0\cap Y$.

Recall the equations $U_k^0B_k=X\setminus B_k$ and $C\cap A_0=B_k\cap A_0\cap Y$.
We thus obtain the equation $U_k^0(C\cap A_0)=(A_0\cap Y)\setminus (C\cap A_0)$.
The map $V_{N+1}$ is equal to $U_k$ on $Y$, and commutes with any element of $\Gamma_N$.
Since $A_0\cap Y$ is a fundamental domain for the action of $\Gamma_N^0$ on $Y$, we obtain the equation $V_{N+1}^0C=Y\setminus C$.

We have shown the equation $V_{N+1}^0D_{N+1}=X\setminus D_{N+1}$.
Condition (\ref{c}) for $D_1,\ldots, D_{N+1}$ holds because $\Gamma_N^0D_{N+1}=D_{N+1}$ and $\mu(D_{N+1}\cap A_0)=1/2^{N+1}$.

We have constructed $D_1,\ldots, D_{N+1}$ and $V_1,\ldots, V_{N+1}$ satisfying conditions (\ref{a})--(\ref{c}).
The induction is completed.
We obtain a sequence $(D_n)_{n=1}^\infty$ in $\calb$ and a sequence $(V_n)_{n=1}^\infty$ in $[\calg]$ satisfying conditions (\ref{a})--(\ref{c}).
Inequality (\ref{g}) and the assumption that $(B_n)_n$ is a.i.\ for $\sigma$ imply that $(D_n)_n$ is also a.i.\ for $\sigma$.
Recall that $V_{N+1}$ is defined so that it is equal to $U_k$ on $X\setminus E$.
The inequality $\mu(E)<1/2^{N+1}$ and the assumption that $(U_n)_n$ is a.c.\ for $\sigma$ imply that $(V_n)_n$ is also a.c.\ for $\sigma$.
Lemma \ref{lem-tile} is proved.
\end{proof}

To get a decomposition of $\calg$ into a direct product, we show the following:

\begin{lem}\label{lem-t}
Let $(D_n)_{n=1}^\infty \subset \calb$ and $(V_n)_{n=1}^\infty \subset [\calg]$ be the sequences in Lemma \ref{lem-tile}.
Then there exist an increasing sequence of positive integers, $n_1<n_2<n_3<\cdots$, and a sequence $(T_m)_{m=1}^\infty$ in $[\calg]$ satisfying the following conditions {\rm (\ref{h})--(\ref{k})}:
\begin{eqnarray}\label{h}
\qquad \textrm{For any $m\in \N$, any $\gamma \in \Gamma$ and any $D\in \cal{D}$,}\qquad \qquad \qquad \\
\textrm{we have $T_m\cdot \gamma =\gamma \cdot T_m$ and $T_m^0D=D$,}\nonumber
\end{eqnarray}
where we define $\Gamma$ as the subgroup of $[\calg]$ generated by $V_{n_1}, V_{n_2}, V_{n_3},\ldots$, and define $\cal{D}$ as the $\sigma$-subalgebra of $\calb$ generated by $D_{n_1}, D_{n_2}, D_{n_3},\ldots$. 
\begin{equation}\label{i}
\textrm{For a.e.\ $x\in X$, we have $G=\{ \, (T_m\cdot \gamma)x \mid m\in \N,\ \gamma \in \Gamma \, \}$.}
\end{equation}
\begin{equation}\label{j}
\textrm{The $\sigma$-algebra $\calb$ is generated by $\cal{D}$ and $\calb^{\Gamma}$,}
\end{equation}
where we set $\calb^\Gamma =\{ \, B\in \calb \mid \gamma^0B=B\ {\rm for\ any}\ \gamma \in \Gamma \, \}$.
\begin{equation}\label{k}
\textrm{For any $B\in \calb^\Gamma$ and any $D\in \cal{D}$, we have $\mu(B\cap D)=\mu(B)\mu(D)$.}
\end{equation}
\end{lem}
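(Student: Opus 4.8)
The plan is to construct the subsequence $(n_j)_j$ and the maps $(T_m)_m$ by a single interleaved induction, enumerating $G=\{g_1,g_2,\dots\}$ with $g_1=e$ and arranging, at the $m$-th stage, that the group element $g_m$ gets covered in the sense of condition (\ref{i}). Before anything dynamical I would dispose of condition (\ref{k}), which is a purely formal consequence of (\ref{a})--(\ref{c}): for $B\in \calb^\Gamma$ and a basic set $D=D_{n_{j_1}}\cap\cdots\cap D_{n_{j_k}}$, the translates $\gamma^0 D$, as $\gamma$ ranges over $\langle V_{n_{j_1}},\dots,V_{n_{j_k}}\rangle$, are the $2^k$ sign patterns in these coordinates and hence partition $X$; since $\gamma^0 B=B$ and $\mu$ is invariant, all $2^k$ numbers $\mu(B\cap\gamma^0 D)$ coincide and sum to $\mu(B)$, giving $\mu(B\cap D)=\mu(B)/2^k=\mu(B)\mu(D)$. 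A $\pi$-system/monotone class argument then upgrades this to all $D\in\cal{D}$.

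For the substantive conditions (\ref{h}) and (\ref{i}) I would exploit that, by (\ref{c}), the subgroup $\Gamma_p=\langle V_{n_1},\dots,V_{n_p}\rangle\cong(\Z/2\Z)^p$ acts on $X$ with the atom $A_0=D_{n_1}\cap\cdots\cap D_{n_p}$ as a fundamental domain of measure $2^{-p}$ for $\Gamma_p^0$. Given the finitely many already-committed generators, I would build a new $T_m$ by first choosing its restriction to $A_0$ to be an automorphism of $A_0$ realized inside $\calg$ — here ergodicity of $\sigma$ is what supplies full-group elements carrying the prescribed $g_m$ into the orbit data — and then extending it $\Gamma_p$-equivariantly to all of $X$ through $T_m\cdot\gamma=\gamma\cdot T_m$. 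Equivariance makes $T_m$ commute exactly with all of $\Gamma_p$, and since the extension keeps each atom invariant it makes $T_m^0$ fix each committed $D_{n_j}$; prescribing the values along a full $\Gamma_p^0$-orbit then forces $g_m\in\{(T_i\cdot\gamma)x:i\le m,\ \gamma\in\Gamma_p\}$ for a.e.\ $x$.

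The delicate point is to append further generators without destroying the exact relations already in force, since the prescribed maps $V_n$ cannot themselves be altered. I would use the a.i.\ property of $(D_n)_n$ and the a.c.\ property of $(V_n)_n$ to choose $n_{p+1}$ so large that $V_{n_{p+1}}$ almost commutes with $T_1,\dots,T_m$ and $D_{n_{p+1}}$ is almost independent of, and almost fixed by, the committed data, confining every discrepancy to a set of measure $<2^{-p}$; then, exactly as in Claim \ref{claim-e} and the inductive step of Lemma \ref{lem-tile}, I would correct each $T_i$ on this small set, performing the surgery $\Gamma_p^0$-equivariantly so that commutation with the old generators survives while commutation with the new $V_{n_{p+1}}$ and invariance of the new atom become exact. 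Because at every later stage the corrections are made equivariantly with respect to the current (larger) committed group and live on sets of summable measure, Borel--Cantelli guarantees that each $T_i$ stabilizes a.e.\ to a limit map that commutes exactly with every $V_{n_j}$ and fixes every $D_{n_j}$; passing to the generated group $\Gamma$ and $\sigma$-algebra $\cal{D}$ yields (\ref{h}), the cumulative covering yields (\ref{i}), and generation of $\calb$ by the committed translates together with the invariants yields (\ref{j}).

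I expect the main obstacle to be precisely this simultaneous maintenance of infinitely many exact relations: condition (\ref{h}) forces each $T_m$ to commute with the entire, only gradually revealed group $\Gamma$, while condition (\ref{i}) demands enough freedom in $T_m$ to exhaust $G$. Reconciling the two requires that the freedom used to cover $g_m$ (supplied by ergodicity on the fundamental domain) be kept disjoint, in measure, from the corrections forced by later generators (supplied by the a.c.\ property), and that none of the later small-set surgeries undo an earlier covering or an earlier commutation. Verifying that these corrections are mutually compatible and summable — so that the limit maps $T_m$ are genuine full-group elements satisfying (\ref{h})--(\ref{i}) exactly — is the heart of the argument.
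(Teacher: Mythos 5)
Your overall architecture is the right one, and in fact it matches the paper's (which follows Jones--Schmidt): condition (\ref{k}) is disposed of by the formal fundamental-domain averaging argument (your argument here is correct and is what the paper does); approximants are built that commute exactly with the finitely many committed $V_{n_j}$ and fix the committed atoms; when a new generator is committed, $n_{p+1}$ is chosen so large that the defect sets are small and the old maps are repaired by an equivariant small-set surgery (this is precisely the paper's map $W\mapsto W^{(n)}$ of Claim \ref{claim-wn}); and one concludes via summable corrections and a.e.\ limits (the paper's condition (\ref{wf2})).

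The gap is in condition (\ref{i}), in two respects. First, your per-stage claim --- that one new map $T_m$, chosen freely on the fundamental domain $A_0$ and extended $\Gamma_p$-equivariantly, covers $g_m$ at a.e.\ $x$ --- fails: equivariance determines $T_m$ on an entire $\Gamma_p^0$-orbit from its single value at the representative in $A_0$, while coverage of $g_m$ is demanded at all $2^p$ points of that orbit, and the corresponding requirements (coverage at $x$ via $\gamma$ means $T_m(z)=g_m\gamma(x)^{-1}$ with $z=\gamma^0x$) are in general mutually incompatible for any one choice of that value. This is exactly why the paper, following Jones--Schmidt, uses finitely many maps $W_{p,q,r}$, $1\le r\le j(p,q)$, per target $g_q$ and per stage $p$, and even then demands coverage only off a set of measure $1/2^p$ (condition (\ref{wf1})), not a.e.\ coverage. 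Second, and more seriously, covering each $g_m$ once cannot survive the later surgeries: when $T_i$ is corrected at a stage $t>m$ so as to commute exactly with $V_{n_t}$, coverage of $g_m$ is destroyed on a set of positive measure $\epsilon_t$ whose location is the commutation-defect set of $T_i$ against $V_{n_t}$ --- it is forced on you, so your hope of keeping the corrections ``disjoint, in measure'' from the covering data is not available. Summability then only gives that the limit maps cover $g_m$ off a set of measure at most $\sum_{t>m}\epsilon_t$, which is a fixed positive number for each $m$; it does not vanish. The paper's remedy is to re-cover every group element infinitely often: for each fixed $q$ there is a fresh batch of maps $\tilde{W}_{p,q,r}$ for every $p\ge q$, covering $g_q$ off a set of measure less than $1/2^{p-1}$; since all of these appear in the final sequence $(T_m)_m$, the deficit for $g_q$ is below $1/2^{p-1}$ for every $p\ge q$, hence zero. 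Your single-pass enumeration (one stage per group element) must be replaced by such a doubly-indexed scheme for the induction to close.
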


\begin{proof}
The numbers $n_1<n_2<n_3<\cdots$ and $T_m\in [\calg]$ in the lemma are constructed inductively.
We fix the notation as follows:
We set $\alpha_0=\{ X\} \subset \calb$.
For $k\in \N$, let $\alpha_k$ be the collection of minimal elements in the subalgebra of $\calb$ generated by $D_{n_1}, D_{n_2},\ldots, D_{n_k}$.
Let $A_{k, 1}, A_{k, 2},\ldots, A_{k, 2^k}$ denote the elements of $\alpha_k$.
Let $\Gamma_0$ be the trivial subgroup of $[\calg]$, and $\Gamma_k$ the subgroup of $[\calg]$ generated by $V_{n_1}, V_{n_2},\ldots, V_{n_k}$.
We set
\[\calb_k =\{ \, B\in \calb \mid V^0B=B\ {\rm for\ any}\ V\in \Gamma_k \, \}.\]

\medskip

\noindent {\bf Construction of $W^{(n)}$.}
Fix $k\in \N$.
Pick $W\in [\calg]$ such that $W^0\circ W^0$ is the identity on $X$; for any $V\in \Gamma_k$, we have $W\cdot V=V\cdot W$; and for any $A\in \alpha_k$, we have $W^0A=A$.
For $n\in \N$ with $n>n_k$, we define $W^{(n)}\in [\calg]$ by
\[W^{(n)}x=\begin{cases}
Wx & \textrm{if}\ x\in W^0D_n\cap D_n\\
(V_n\cdot W\cdot V_n)x & \textrm{if}\ x\in V_n^0(W^0D_n\cap D_n)\\
e & \textrm{otherwise}
\end{cases}\]
(see Figure \ref{fig-w-bra-n}).
\begin{figure}
\begin{center}
\includegraphics[width=6cm]{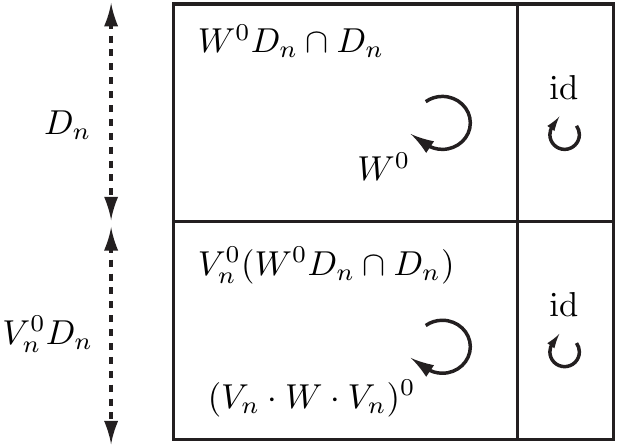}
\caption{A description of $(W^{(n)})^0$ with the whole square indicating $X$. The automorphism $V_n^0$ of $X$ exchanges $D_n$ and $V_n^0D_n$.}\label{fig-w-bra-n}
\end{center}
\end{figure}
Notable properties of $W^{(n)}$ are listed in the following:

\begin{claim}\label{claim-wn}
The following assertions hold:
\begin{enumerate}
\item The automorphism $(W^{(n)})^0\circ (W^{(n)})^0$ is the identity on $X$.
\item For any $V\in \Gamma_k\cup \{ V_n\}$, we have $W^{(n)}\cdot V=V\cdot W^{(n)}$.
\item For any $A\in \alpha_k\cup \{ D_n\}$, we have $(W^{(n)})^0A=A$.
\item We have $\mu(\{ \, x\in X\mid W^{(n)}x\neq Wx\, \})\to 0$ as $n\to \infty$.
\end{enumerate}
\end{claim}

\begin{proof}
Assertion (i) follows from the definition of $W^{(n)}$.

We prove assertion (ii).
Pick $V\in \Gamma_k$.
Since $n>n_k$, we have $V^0D_n=D_n$ by condition (\ref{b}).
The equation $W\cdot V=V\cdot W$ implies that $V^0$ preserves $W^0D_n\cap D_n$.
The equation $V_n\cdot V=V\cdot V_n$ implies that $V^0$ also preserves $V_n^0(W^0D_n\cap D_n)$.
We can now check the equation $W^{(n)}\cdot V=V\cdot W^{(n)}$ from the definition of $W^{(n)}$.
Commutativity of $W^{(n)}$ and $V_n$ follows from the definition of $W^{(n)}$ (see Figure \ref{fig-w-bra-n}).
Assertion (ii) is proved.

We prove assertion (iii).
Pick $A\in \alpha_k$.
The automorphism $W^0$ preserves each of $A$ and $W^0D_n\cap D_n$.
It follows that $(W^{(n)})^0$ preserves the set $A\cap W^0D_n\cap D_n$.
Using the equation $V_n^0A=A$ in condition (\ref{b}), we see that $(W^{(n)})^0$ preserves the set $A\cap V_n^0(W^0D_n\cap D_n)$.
It therefore follows that $(W^{(n)})^0$ preserves $A$.
We have the equation $(W^{(n)})^0D_n=D_n$ by the definition of $W^{(n)}$ (see Figure \ref{fig-w-bra-n}).
Assertion (iii) is proved.

Assertion (iv) holds because the sequence $(D_n)_n$ is a.i.\ for $\sigma$ and the sequence $(V_n)_n$ is a.c.\ for $\sigma$.
\end{proof}

Let $G=\{ g_0=e, g_1, g_2,\ldots \}$ be an enumeration of elements of $G$.
We fix a countable subset $\cal{F}=\{ F_1, F_2, F_3,\ldots \}$ of $\calb$ generating $\calb$.
Following Jones-Schmidt \cite[pp.106--108]{js} and using Claim \ref{claim-wn}, we can find
\begin{itemize}
\item an increasing sequence of positive integers, $n_1<n_2<n_3<\cdots$;
\item a positive integer $j(p, q)$ for any $p, q\in \N$ with $q\leq p$; and
\item $W_{p, q, r, t}\in [\calg]$ and $F_{p, q, s, t}\in \calb_t$ for any integers $p$, $q$, $r$, $s$ and $t$ with $1\leq q\leq p$, $1\leq r\leq j(p, q)$, $1\leq s\leq 2^{p-1}$ and $p\leq t$
\end{itemize}
satisfying the following conditions (\ref{wf1})--(\ref{wf6}):
For any integers $p$, $q$, $r$, $s$, $t$ and $t'$ with $1\leq q\leq p$, $1\leq r\leq j(p, q)$, $1\leq s\leq 2^{p-1}$, $p\leq t$ and $p\leq t'$, for any $V\in \Gamma_t$ and for any $A\in \alpha_t$, we have
\begin{equation}\label{wf1}
\mu(\{ \, x\in X\mid g_q\not\in \{ \, (V'\cdot W_{p, q, r', p})x \mid V'\in \Gamma_{p-1},\ 1\leq r'\leq j(p, q)\, \} \, \})<\frac{1}{\, 2^p},
\end{equation}
\begin{equation}\label{wf2}
\mu(\{ \, x\in X\mid W_{p, q, r, t'}x\neq W_{p, q, r, t'+1}x \, \})<\frac{1}{\, 4^{t'+1}j(p, q)},
\end{equation}
\begin{equation}\label{wf3}
W_{p, q, r, t}\cdot V=V\cdot W_{p, q, r, t},
\end{equation}
\begin{equation}\label{wf4}
W_{p, q, r, t}^0A=A,
\end{equation}
\begin{equation}\label{wf5}
\mu((F_{p, q, s, p}\bigtriangleup F_q)\cap A_{p-1, s})<\frac{1}{\, 4^p},
\end{equation}
\begin{equation}\label{wf6}
\mu(F_{p, q, s, t'}\bigtriangleup F_{p, q, s, t'+1})<\frac{1}{\, 4^{t'+1}}.
\end{equation}
The process to find such integers, $n_1<n_2<n_3<\cdots$, etc.\ is a verbatim translation of Jones-Schmidt's argument.
We thus omit it.

Let $p$, $q$ and $r$ be integers with $1\leq q\leq p$ and $1\leq r\leq j(p, q)$.
By inequality (\ref{wf2}), the limit of $W_{p, q, r, t}$ as $t\to \infty$ exists, and we denote it by $\tilde{W}_{p, q, r}\in [\calg]$.
The limit is taken with respect to the metric $d$ on $[\calg]$ defined by $d(U, V)=\mu(\{ \, x\in X\mid Ux\neq Vx\, \})$ for $U, V\in [\calg]$.
By equation (\ref{wf3}), $\tilde{W}_{p, q, r}$ commutes with any element of $\Gamma$.
By equation (\ref{wf4}), $\tilde{W}_{p, q, r}^0$ fixes any element of $\cal{D}$.
Inequality (\ref{wf2}) also implies the inequality
\[\mu(\{ \, x\in X\mid \tilde{W}_{p, q, r}x\neq W_{p, q, r, p}x\, \} )<\sum_{t=p+1}^\infty \frac{1}{\, 4^tj(p, q)}=\frac{1}{\, 3\cdot 4^pj(p, q)}.\]
By inequality (\ref{wf1}), for any integers $p$, $q$ with $1\leq q\leq p$, we have
\[\mu(\{ \, x\in X\mid g_q\not\in \{ \, (V\cdot \tilde{W}_{p, q, r})x \mid V\in \Gamma_{p-1},\ 1\leq r\leq j(p, q)\, \} \, \})<\frac{1}{\, 2^p}+\frac{1}{\, 3\cdot 4^p}<\frac{1}{\, 2^{p-1}}.\]
Let $(T_m)_{m=1}^\infty$ be a sequence obtained by ordering elements of the set
\[\{ \, \tilde{W}_{p, q, r}\mid 1\leq q\leq p,\ 1\leq r\leq j(p, q)\, \}.\]
Conditions (\ref{h}) and (\ref{i}) for this $(T_m)_{m=1}^\infty$ are checked above.

Let $p$, $q$ and $s$ be integers with $1\leq q\leq p$ and $1\leq s\leq 2^{p-1}$.
By inequality (\ref{wf6}), the limit of $F_{p, q, s, t}$ as $t\to \infty$ exists, and we denote it by $\tilde{F}_{p, q, s}\in \calb$.
The limit is taken with respect to the metric $d'$ on $\calb$ defined by $d'(A, B)=\mu(A\bigtriangleup B)$ for $A, B\in \calb$.
Since $F_{p, q, s, t}$ belongs to $\calb_t$ for any $t$, the limit $\tilde{F}_{p, q, s}$ belongs to $\calb^\Gamma$.
Inequality (\ref{wf6}) also implies the inequality
\[\mu(\tilde{F}_{p, q, s}\bigtriangleup F_{p, q, s, p})<\sum_{t=p+1}^\infty \frac{1}{\, 4^t}=\frac{1}{\, 3\cdot 4^p}.\]
By inequality (\ref{wf5}), for any integers $p$, $q$ with $1\leq q\leq p$, there exists a set $C$ in the subalgebra of $\calb$ generated by $\alpha_{p-1}$ and $\calb^\Gamma$ such that $\mu(F_q\bigtriangleup C)<2^{p-1}(1/4^p+1/(3\cdot 4^p))<1/2^p$.
Condition (\ref{j}) follows.
For any $B\in \calb^\Gamma$, any $k\in \N$ and any element $D$ of the subalgebra of $\calb$ generated by $D_{n_1}, D_{n_2},\ldots, D_{n_k}$, the equation $\mu(B\cap D)=\mu(B)\mu(D)$ holds.
Condition (\ref{k}) follows.
Lemma \ref{lem-t} is proved.
\end{proof}

We fix the notation.
We define $\Theta$ as the subgroup of $[\calg]$ generated by $(T_m)_{m=1}^\infty$, and set
\[\calb^\Theta =\{ \, B\in \calb \mid T^0B=B\ {\rm for\ any}\ T\in \Theta \, \}.\]
Let $\Theta^0$ denote the group of automorphisms $T^0$ of $X$ with $T\in \Theta$.
Similarly, let $\Gamma^0$ denote the group of automorphisms $\gamma^0$ of $X$ with $\gamma \in \Gamma$. 
Let $f_1\colon (X, \mu)\to (X_1, \mu_1)$ be the ergodic decomposition for the action of $\Theta^0$ on $(X, \mu)$.
Let $f_2\colon (X, \mu)\to (X_2, \mu_2)$ be the ergodic decomposition for the action of $\Gamma^0$ on $(X, \mu)$.
We define a map $f\colon X\to X_1\times X_2$ by $f(x)=(f_1(x), f_2(x))$ for $x\in X$.

\begin{lem}\label{lem-b}
The following assertions hold:
\begin{enumerate}
\item The equations $\calb^\Gamma \cap \calb^\Theta =\{ \emptyset, X\}$ and $\calb^\Theta =\cal{D}$ hold.
\item The map $f\colon (X, \mu)\to (X_1\times X_2, \mu_1\times \mu_2)$ is an isomorphism of measure spaces, that is, the restriction of $f$ to some conull measurable subset of $X$ is an isomorphism onto a conull measurable subset of $X_1\times X_2$, and the equation $f_*\mu=\mu_1\times \mu_2$ holds.
\end{enumerate}
\end{lem}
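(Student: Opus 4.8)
The plan is to use the structural conditions (\ref{h})--(\ref{k}) from Lemma \ref{lem-t} to identify $\calb^\Theta$ with $\cal{D}$ and then to show that the pair of sub-$\sigma$-algebras $(\cal{D}, \calb^\Gamma)$ splits $(X,\mu)$ as a direct product. For assertion (i), I would first verify $\calb^\Theta=\cal{D}$. The inclusion $\cal{D}\subset \calb^\Theta$ is immediate from condition (\ref{h}), which gives $T_m^0 D=D$ for every $m$ and every $D\in \cal{D}$, hence $T^0$ fixes each element of $\cal{D}$ for all $T\in \Theta$. For the reverse inclusion, I would combine condition (\ref{i}) with condition (\ref{j}): take $B\in \calb^\Theta$, write it via (\ref{j}) using generators from $\cal{D}$ and $\calb^\Gamma$, and use condition (\ref{i}), which says that $\{(T_m\cdot\gamma)x\}$ exhausts the whole orbit $Gx$, to argue that the $\Theta^0$-invariance of $B$ forces its $\calb^\Gamma$-component to be trivial. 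Concretely, the orbit $Gx$ decomposes into the $\Theta^0\Gamma^0$-orbit, and $B$ being simultaneously determined by $\cal{D}\vee\calb^\Gamma$ and invariant under $\Theta^0$ should leave only the $\cal{D}$-part. Once $\calb^\Theta=\cal{D}$ is established, the equation $\calb^\Gamma\cap\calb^\Theta=\calb^\Gamma\cap\cal{D}=\{\emptyset, X\}$ follows from the independence condition (\ref{k}): a set $B\in \calb^\Gamma\cap\cal{D}$ satisfies $\mu(B)=\mu(B\cap B)=\mu(B)^2$, forcing $\mu(B)\in\{0,1\}$.

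For assertion (ii), the map $f=(f_1,f_2)$ is built from the two ergodic decompositions, so that $f_1$ corresponds to the $\Theta^0$-invariant $\sigma$-algebra $\calb^\Theta=\cal{D}$ and $f_2$ to the $\Gamma^0$-invariant $\sigma$-algebra $\calb^\Gamma$. I would identify $L^\infty(X_1,\mu_1)$ with $L^\infty(\cal{D})$ and $L^\infty(X_2,\mu_2)$ with $L^\infty(\calb^\Gamma)$. That $f$ separates points (up to null sets), i.e.\ that $f$ is an isomorphism onto its image, is exactly condition (\ref{j}): $\calb$ is generated by $\cal{D}$ and $\calb^\Gamma$, so the two coordinate maps together generate $\calb$ and $f$ is essentially injective. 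That $f_*\mu=\mu_1\times\mu_2$ is exactly the independence condition (\ref{k}): for $D\in\cal{D}$ and $B\in\calb^\Gamma$ we have $\mu(D\cap B)=\mu(D)\mu(B)=\mu_1(f_1(D))\,\mu_2(f_2(B))$, and since rectangles of this form generate the product $\sigma$-algebra on $X_1\times X_2$, the pushforward measure agrees with the product on a generating family and hence everywhere.

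The main obstacle I anticipate is the reverse inclusion $\calb^\Theta\subset\cal{D}$ in assertion (i). The subtlety is that $\calb^\Theta$ is an a priori large invariant $\sigma$-algebra, and to pin it down to $\cal{D}$ one must genuinely use the ``orbit-filling'' statement (\ref{i}) rather than just (\ref{h}) and (\ref{k}); the point is that $\Theta$ and $\Gamma$ together generate enough of $[\calg]$ to recover the whole groupoid orbit, so that a $\Theta^0$-invariant set which is measurable with respect to $\cal{D}\vee\calb^\Gamma$ can have no nontrivial $\calb^\Gamma$-dependence. The argument here should parallel the corresponding step in Jones-Schmidt, using that $\calb^\Gamma$ is precisely the algebra on which $\Theta^0$ acts ergodically fiberwise, so that invariance under $\Theta^0$ collapses the $\calb^\Gamma$-coordinate to constants. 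Once this is in place, assertion (ii) is a formal consequence of (\ref{j}) and (\ref{k}) together with the standard fact that a measurable map whose coordinate algebras generate the domain and whose rectangles have product measure is an isomorphism onto the product space.
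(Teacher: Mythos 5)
Your outline matches the paper's proof in most respects: the inclusion $\cal{D}\subset \calb^\Theta$ from condition (\ref{h}), assertion (ii) from conditions (\ref{j}) and (\ref{k}) together with a point-realization theorem (the paper invokes \cite[Theorem 2.1]{ramsay}; your ``standard fact'' is exactly this), and the recognition that the hard point is $\calb^\Theta \subset \cal{D}$. Your derivation of $\calb^\Gamma \cap \calb^\Theta=\{ \emptyset, X\}$ from condition (\ref{k}) (a set in $\calb^\Gamma \cap \cal{D}$ is independent of itself, so $\mu(B)=\mu(B)^2$) is correct and slicker than the paper's, which instead gets triviality directly from condition (\ref{i}) and ergodicity of $\sigma$. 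But this reordering is where your plan breaks down, as explained next.

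The genuine gap is the inclusion $\calb^\Theta \subset \cal{D}$, which you assert (``invariance under $\Theta^0$ collapses the $\calb^\Gamma$-coordinate to constants'') but never prove; no mechanism for the collapse is given. The paper's mechanism is a conditional-expectation argument: disintegrate $\mu$ over the ergodic decomposition $f_1$ of the $\Theta^0$-action and set $\bar{\varphi}(x)=\mu_{f_1(x)}(\varphi)$. For $A\in \calb^\Theta$, approximate $A$ via condition (\ref{j}) by $E=\bigsqcup_j (A_{k, j}\cap C_j)$ with $C_j\in \calb^\Gamma$; since $\chi_A=\bar{\chi}_A$ and the operator $\varphi \mapsto \bar{\varphi}$ is an $L^1$-contraction, $\chi_A$ is approximated by $\bar{\chi}_E$, and on each atom $A_{k, j}$ (which is $\Theta^0$-invariant) one has $\bar{\chi}_E=\bar{\chi}_{C_j}$. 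The crux is that $\bar{\chi}_{C_j}$ is \emph{constant}: it is $\Theta^0$-invariant by construction and $\Gamma^0$-invariant because $\Theta$ and $\Gamma$ commute by condition (\ref{h}), and any function invariant under both groups is constant, since by condition (\ref{i}) the $\Theta^0\Gamma^0$-orbits are a.e.\ the full $G$-orbits and $\sigma$ is ergodic. Hence $\bar{\chi}_E$ is $\cal{D}$-measurable and $A\in \cal{D}$. Note that the step just used --- joint $\Theta^0$- and $\Gamma^0$-invariance implies constancy --- is precisely the function form of $\calb^\Gamma \cap \calb^\Theta=\{ \emptyset, X\}$. So this triviality is an \emph{input} to the proof of $\calb^\Theta=\cal{D}$, not merely a consequence of it: your proposed order (prove $\calb^\Theta=\cal{D}$ first, deduce triviality afterwards from condition (\ref{k})) cannot be carried out as stated, because completing your own sketch of the hard inclusion requires the triviality to be in hand first, exactly as in the paper. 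Relatedly, your phrase ``$\calb^\Gamma$ is precisely the algebra on which $\Theta^0$ acts ergodically fiberwise'' should be replaced by the correct statement: $\Theta^0$ acts ergodically on the space $(X_2, \mu_2)$ of $\Gamma^0$-ergodic components, which is again equivalent to the joint-triviality statement and rests on condition (\ref{i}) plus ergodicity of $\sigma$.
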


\begin{proof}
We prove assertion (i).
Condition (\ref{i}) and ergodicity of the action $\sigma \colon G\c (X, \mu)$ imply that $\calb^\Gamma \cap \calb^\Theta =\{ \emptyset, X\}$.

The inclusion $\cal{D}\subset \calb^\Theta$ follows from condition (\ref{h}).
To prove the converse inclusion, we fix the notation.
Let $\mu =\int_{X_1}\mu_z \, d\mu_1(z)$ be the disintegration of $\mu$ with respect to $f_1$.
For an integrable function $\varphi$ on $(X, \mu)$, we define a function $\bar{\varphi}$ on $X$ by $\bar{\varphi}(x)=\mu_{f_1(x)}(\varphi)$ for $x\in X$.
The function $\bar{\varphi}$ is then $\Theta^0$-invariant.
For $B\in \calb$, let $\chi_B\colon X\to \{ 0, 1\}$ denote the characteristic function on $B$.

Pick $A\in \calb^\Theta$.
Recall that for $k\in \N$, the subsets of $X$, $A_{k, 1}, A_{k, 2},\ldots, A_{k, 2^k}$, denote the minimal elements in the subalgebra of $\calb$ generated by $D_{n_1}, D_{n_2},\ldots, D_{n_k}$.
By condition (\ref{j}), the set $A$ is approximated by a subset of $X$ of the form
\[E=\bigsqcup_{j=1}^{2^k}(A_{k, j}\cap C_j)\]
with $k\in \N$ and $C_j \in \calb^\Gamma$.
The function $\chi_A=\bar{\chi}_A$ is therefore approximated by the function $\bar{\chi}_E$.
Pick an integer $j$ with $1\leq j\leq 2^k$, and put $E_j=A_{k, j}\cap C_j$.
Since $A_{k, j}$ is $\Theta^0$-invariant, we have $\bar{\chi}_{E_j}(x)=\mu_{f_1(x)}(\chi_{C_j})$ for a.e.\ $x\in A_{k, j}$, and have $\bar{\chi}_{E_j}(x)=0$ for a.e.\ $x\in X\setminus A_{k, j}$.
We thus have $\bar{\chi}_{E_j}=\bar{\chi}_{C_j}$ on $A_{k, j}$.
On the other hand, the function $\bar{\chi}_{C_j}$ is constant on $X$ because it is invariant under both $\Gamma^0$ and $\Theta^0$.
It follows that the function $\bar{\chi}_E$ is constant on $A_{k, j}$ with the value $\mu(C_j)$, and is therefore measurable with respect to $\cal{D}$.
Since $\chi_A$ is approximated by $\bar{\chi}_E$, the set $A$ belongs to $\cal{D}$.
The inclusion $\calb^\Theta \subset \cal{D}$ is proved.
Assertion (i) follows.

We prove assertion (ii).
The equation $f_*\mu=\mu_1\times \mu_2$ follows from condition (\ref{k}) and the equations $(f_1)_*\mu=\mu_1$, $(f_2)_*\mu =\mu_2$ and $\calb^\Theta =\cal{D}$.
Let $f^*$ be the map from the algebra of measurable subsets of $X_1\times X_2$ into $\calb$ associated to $f$.
By condition (\ref{j}) and the equation $\calb^\Theta =\cal{D}$, the $\sigma$-algebra $\calb$ is generated by $\calb^\Theta$ and $\calb^\Gamma$.
The map $f^*$ is therefore surjective.
Assertion (ii) follows from \cite[Theorem 2.1]{ramsay}.
\end{proof}

We are now ready to prove stability of $\calg$.
We set $\cal{N}_1=\{ \, (Tx, x)\mid T\in \Theta,\ x\in X\, \}$.
This is a normal subgroupoid of $\calg$ by conditions (\ref{h}) and (\ref{i}).
Let $\cal{Q}_1$ be the quotient $\calg /\cal{N}_1$, which is a discrete measured groupoid on $(X_1, \mu_1)$.
We have the natural homomorphism $F_1\colon \calg \to \cal{Q}_1$ such that the induced map from $X$ into $X_1$ is equal to $f_1$.
The homomorphism $F_1$ is {\it class-surjective}, that is, for a.e.\ $h\in \cal{Q}_1$ and a.e.\ $x\in X$ such that $f_1(x)$ is equal to the source of $h$, there exists $g\in \calg$ such that $F_1(g)=h$ and the source of $g$ is equal to $x$.
Similarly, we set $\cal{N}_2=\{ \, (\gamma x, x)\mid \gamma \in \Gamma,\ x\in X\, \}$.
This is a normal subgroupoid of $\calg$, and let $\cal{Q}_2$ be the quotient $\calg /\cal{N}_2$.
Let $F_2\colon \calg \to \cal{Q}_2$ be the natural homomorphism.
We refer to \cite[Sections 3.4 and 3.5]{kida-bs} for normal subgroupoids and quotients by them.

Let $F\colon \calg \to \cal{Q}_1\times \cal{Q}_2$ be the homomorphism defined by $F(g)=(F_1(g), F_2(g))$ for $g\in \calg$.
We prove that $F$ is an isomorphism.
The restriction $F_1\colon \cal{N}_2\to \cal{Q}_1$ is class-surjective by conditions (\ref{h}) and (\ref{i}).
Similarly, the restriction $F_2\colon \cal{N}_1\to \cal{Q}_2$ is class-surjective.
It turns out from Lemma \ref{lem-b} (ii) that $F$ is surjective.
Combining conditions (\ref{a})--(\ref{c}), (\ref{h}) and (\ref{i}) with the equation $\calb^\Theta =\cal{D}$, we see that $\cal{Q}_1$ is isomorphic to $\calr_0$, the ergodic hyperfinite equivalence relation of type ${\rm II}_1$, and that the kernel of the restriction $F_1\colon \cal{N}_2\to \cal{Q}_1$ is the trivial groupoid on $(X, \mu)$.
It follows that $\cal{N}_1\cap \cal{N}_2$ is the trivial groupoid on $(X, \mu)$, and thus $F$ is injective.

Through an isomorphism between $\calr_0$ and $\calr_0\times \calr_0$, we obtain an isomorphism between $\calg$ and $\calg \times \calr_0$.
The proof of Theorem \ref{thm-js} is completed.
\end{proof}


\subsection{Stability of groups}

Recall that a discrete group is called stable if it has an ergodic, free, p.m.p.\ and stable action.
We prove Theorem \ref{thm-ac}, a useful criterion to get stability of groups, as a consequence of Lemmas \ref{lem-free} and \ref{lem-erg} below.

\begin{lem}\label{lem-free}
Any discrete group having an ergodic, p.m.p.\ and stable action is stable.
\end{lem}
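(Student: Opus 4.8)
The plan is to upgrade the given action to a free one by taking a product with a free action, and to transport the stability through Theorem \ref{thm-js}. So let $\sigma\colon G\c (X,\mu)$ be an ergodic, p.m.p.\ and stable action, put $\calg=G\ltimes (X,\mu)$, so that $\calg\cong \calg\times \calr_0$, and fix a free, ergodic, weakly mixing, p.m.p.\ action $G\c (Y,\nu)$. The product action $\rho\colon G\c (X\times Y,\mu\times \nu)$ is then free, because $G\c Y$ is free, and ergodic, because $G\c Y$ is weakly mixing while $\sigma$ is ergodic; it is clearly p.m.p. By Theorem \ref{thm-js} it therefore suffices to produce a non-trivial asymptotically central sequence in $[\calg']$, where $\calg'=G\ltimes (X\times Y)$; the group $G$ is then stable, witnessed by $\rho$.

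The natural candidate comes from the stability of $\sigma$. By Theorem \ref{thm-js} there is a non-trivial a.c.\ sequence $(U_n)_n$ in $[\calg]$; define $\tilde{U}_n\in [\calg']$ by $\tilde{U}_n(x,y)=U_n(x)$, so that $\tilde{U}_n^0(x,y)=(U_n^0 x,\, U_n(x)y)$. First I would check the easy points. Condition (c) for $(\tilde{U}_n)_n$ reduces to condition (c) for $(U_n)_n$, since $U_g$ is the constant map $g$ and the relevant maps do not depend on $y$. Non-triviality also transfers: if an a.i.\ sequence $(A_n)_n$ in $X$ witnesses non-triviality of $(U_n)_n$, then $(A_n\times Y)_n$ is a.i.\ for $\rho$ and $\tilde{U}_n^0(A_n\times Y)=U_n^0 A_n\times Y$, whence $(\mu\times \nu)(\tilde{U}_n^0(A_n\times Y)\bigtriangleup (A_n\times Y))=\mu(U_n^0 A_n\bigtriangleup A_n)\not\to 0$. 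For the same reason condition (a) holds on all sets of the form $A\times Y$. The only remaining point is condition (a) on sets $X\times B$ with $B\subseteq Y$, which unwinds to $\int_X \nu(U_n(x)^{-1}B\bigtriangleup B)\,d\mu(x)\to 0$.

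This last condition is the hard part, and it fails for a careless choice of $Y$. Writing $\beta_n$ for the distribution on $G$ of the map $U_n$ under $\mu$, condition (a) in the $Y$-direction says exactly that $(\beta_n)_n$ is asymptotically invariant for $G\c Y$, i.e.\ $\int_G \nu(gB\bigtriangleup B)\,d\beta_n(g)\to 0$ for every $B\subseteq Y$. Here I would exploit that condition (c) for $(U_n)_n$, together with $G$-invariance of $\mu$, forces $(\beta_n)_n$ to be asymptotically conjugation-invariant, $\Vert {}^{g}\beta_n-\beta_n\Vert_1\to 0$, while condition (a) and non-triviality keep $\beta_n$ from concentrating at the neutral element. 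The task then becomes to build a \emph{free}, ergodic, weakly mixing $Y$ on which this particular asymptotically conjugation-invariant sequence acts asymptotically invariantly. The natural construction is a Gaussian action (Section \ref{sec-pre}): a representation $\pi$ almost having the unit vectors $\sqrt{\beta_n}$ as invariant vectors yields, by the mechanism behind Lemma \ref{lem-gauss}, a Gaussian action carrying $(\beta_n)_n$ as an asymptotically invariant sequence in the above sense. The main obstacle I expect is reconciling this with \emph{freeness}: freeness of the Gaussian action pushes the underlying representation toward mixing/weak containment of the regular representation, which is in direct tension with $(\beta_n)_n$ remaining asymptotically invariant (in particular the conjugation representation is useless here, as it is trivial on the center). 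Thus the crux is to engineer a single representation $\pi$ that is simultaneously free-inducing for its Gaussian action and almost contains the vectors $\sqrt{\beta_n}$; this matching of the inner-amenability-type sequence $(\beta_n)_n$ to a free action is where the real work of the lemma lies.
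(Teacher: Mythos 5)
Your reduction is sound as far as it goes: for the diagonal product $\rho\colon G\c (X\times Y,\mu\times\nu)$ and the lift $\tilde{U}_n(x,y)=U_n(x)$, condition (c), non-triviality, and condition (a) on sets of the form $A\times Y$ all descend from the corresponding properties of $(U_n)_n$, and the one remaining requirement is indeed condition (a) in the $Y$-direction, $\int_X\nu(U_n(x)^{-1}B\bigtriangleup B)\,d\mu(x)\to 0$ for every measurable $B\subset Y$. The genuine gap is that you never produce a $(Y,\nu)$ with this property: your final paragraph names the construction of a free, ergodic, weakly mixing action carrying $(\beta_n)_n$ asymptotically invariantly as ``the crux'' and ``the real work of the lemma,'' and leaves it unresolved. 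Note how strong that requirement is: it asks that \emph{every} vector of $L^2(Y,\nu)$ be almost invariant along $\beta_n$, while the action is free and weakly mixing, and the sequence $(U_n)_n$, hence $(\beta_n)_n$, is handed to you and cannot be re-chosen. Asymptotic conjugation-invariance of $\beta_n$ together with non-concentration at $e$ is an inner-amenability-type condition on $G$; it gives no mechanism for building such a $Y$, and the tension with freeness that you yourself flag is real. So the proposal reduces the lemma to a sub-problem at least as hard as the lemma, and stops there.

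The paper avoids this dead end by never forming the untwisted diagonal product. It uses the stability isomorphism $f\colon\calg\to\calg\times\calg_0$ itself, where $\calg_0=\Z\ltimes(X_0,\mu_0)$: composing $f$ with the projection of $\calg\times\calg_0=H\ltimes(X\times X_0)$ onto $H=G\times\Z$ gives a cocycle $\alpha\colon\calg\to H$, hence a measure equivalence coupling $\Sigma=X\times H$ of $G$ with $H$. Tensoring this coupling with a free \emph{mixing} $G$-action $(Y,\nu)$ (a Bernoulli shift; no almost-invariance property of $Y$ is needed), the quotient on the $H$-side is the product of the coordinatewise action $H\c X\times X_0$ with the pullback $\tau$ of $G\c Y$ through $H\to G$; this action is ergodic, free, p.m.p., and stable for trivial reasons, since its groupoid is $(G\ltimes(X\times Y))\times\calr_0$ and $\calr_0\cong\calr_0\times\calr_0$. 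The quotient on the $G$-side is then an ergodic, free, p.m.p.\ action of $G$ --- concretely the skew product $g(x,y)=(gx,\tau(\alpha(g,x))y)$, not your product $g(x,y)=(gx,gy)$ --- whose groupoid is isomorphic to that of the $H$-side; since stability is by definition invariant under isomorphism of the associated groupoid, this $G$-action is stable, and no a.c.-sequence verification on the product is ever needed. The cocycle twist by $\alpha$ is precisely what reconciles freeness with stability, which is what your untwisted product cannot achieve by a choice of $Y$ alone.
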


\begin{proof}
Let $G$ be a discrete group and $G\c (X, \mu)$ an ergodic, p.m.p.\ and stable action.
We construct an ergodic, free, p.m.p.\ and stable action of $G$.
Set $\calg =G\ltimes (X, \mu)$.
Let $\Z \c (X_0, \mu_0)$ be an ergodic, free and p.m.p.\ action of the infinite cyclic group $\Z$ on a standard probability space.
Set $\calg_0=\Z \ltimes (X_0, \mu_0)$.
Since the action $G\c (X, \mu)$ is stable, we have an isomorphism $f\colon \calg \to \calg \times \calg_0$.
Set $H=G\times \Z$.
Let $\sigma \colon H=G\times \Z \c (X\times X_0, \mu \times \mu_0)$ be the coordinatewise action, which associates $\calg \times \calg_0$.
Let $\alpha \colon \calg \to H$ be the composition of $f$ with the projection from $\calg \times \calg_0$ onto $H$.

We define a measure space $(\Sigma, m)$ and a measure-preserving action of $G\times H$ on it as follows:
Set $\Sigma =X\times H$, and define a measure $m$ on $\Sigma$ as the product of $\mu$ and the counting measure on $H$.
Define an action of $G\times H$ on $\Sigma$ by
\[(g, h)(x, h')=(gx, \alpha(g, x)h'h^{-1})\quad \textrm{for}\ g\in G,\ h, h'\in H\ \textrm{and}\ x\in X.\]
This action $G\times H\c (\Sigma, m)$ defines a measure-equivalence coupling of $G$ and $H$.
We refer to \cite[Section 2.3]{kida-survey} for a coupling and its relationship to an isomorphism of groupoids.

Pick a free, mixing and p.m.p.\ action $G\c (Y, \nu)$ (e.g., the Bernoulli shift).
Let $\tau \colon H\c (Y, \nu)$ be the action obtained through the projection of $H=G\times \Z$ onto $G$.
Let $G\times H$ act on $(\Sigma \times Y, m\times \nu)$ by the formula
\[(g, h)(z, y)=((g, h)z, hy)\quad \textrm{for}\ g\in G,\ h\in H,\ z\in \Sigma\ \textrm{and}\ y\in Y.\]
This action defines a coupling of $G$ and $H$.
Let $e_H$ denote the neutral element of $H$.
The action of $H$ on the quotient $(\Sigma \times Y)/(G\times \{ e_H\})$ is isomorphic to the product action of $\sigma$ and $\tau$, and is therefore ergodic, free, p.m.p.\ and stable.
Since the action of $G\times H$ on $(\Sigma \times Y, m\times \nu)$ defines a coupling, we have an isomorphism between the groupoids
\[G\ltimes ((\Sigma \times Y)/(\{ e_G\} \times H))\quad \textrm{and}\quad H\ltimes ((\Sigma \times Y)/(G\times \{ e_H\})),\]
where $e_G$ is the neutral element of $G$.
We thus obtain an ergodic, free, p.m.p.\ and stable action of $G$.
\end{proof}

\begin{lem}\label{lem-erg}
Let $G$ be a discrete group and $\sigma \colon G\c (X, \mu)$ a p.m.p.\ action.
We set $\calg =G\ltimes (X, \mu)$, and suppose that $\sigma$ admits a non-trivial a.c.\ sequence in $[\calg]$.
Let $\theta \colon (X, \mu)\to (Z, \xi)$ be the ergodic decomposition for $\sigma$.
Let $\mu =\int_Z\mu_z \, d\xi(z)$ be the disintegration of $\mu$ with respect to $\theta$.
For a.e.\ $z\in Z$, we have the ergodic p.m.p.\ action $\sigma_z\colon G\c (X, \mu_z)$, and set $\calg_z=G\ltimes (X, \mu_z)$.

Then there exists a measurable subset $W$ of $Z$ of positive measure such that for a.e.\ $z\in W$, the action $\sigma_z$ admits a non-trivial a.c.\ sequence in $[\calg_z]$.
\end{lem}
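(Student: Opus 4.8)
The plan is to restrict the given non-trivial a.c.\ sequence $(U_n)_n$ to the ergodic components of $\sigma$ and to show that, after passing to a suitable subsequence, the restriction stays a.c.\ and non-trivial on a set of components of positive measure. First I would record that every $U\in[\calg]$ descends to the fibers of $\theta$: since $U^0$ moves each point inside its $G$-orbit, hence inside its ergodic component, we have $\theta\circ U^0=\theta$ a.e., and together with $(U^0)_*\mu=\mu$ and uniqueness of the disintegration this gives $(U^0)_*\mu_z=\mu_z$ for a.e.\ $z$. Thus $U$ restricts to $U_z\in[\calg_z]$ for a.e.\ $z$ with $U_z^0=U^0|_{\theta^{-1}(z)}$, and the relevant quantities disintegrate: for $B\in\calb$, $g\in G$ and $U,V\in[\calg]$ we have $\mu(U^0B\bigtriangleup B)=\int_Z\mu_z(U_z^0B\bigtriangleup B)\,d\xi(z)$ and $\mu(gB\bigtriangleup B)=\int_Z\mu_z(gB\bigtriangleup B)\,d\xi(z)$, and likewise for the commutator defect $\mu(\{x:(U\cdot V)x\neq(V\cdot U)x\})$.

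Next I would reduce all the conditions to countable families. By the criterion recalled in Subsection~\ref{subsec-stab}, a sequence in $[\calg_z]$ is a.c.\ for $\sigma_z$ once conditions (a) and (c) hold, and (c) ranges only over the countable set $g\in G$. For (a) it suffices to use a fixed countable algebra $\calb_0$ of Borel subsets of $X$ generating the Borel $\sigma$-algebra, because such a $\calb_0$ is dense in the measure algebra of $(X,\mu_z)$ for \emph{every} $z$ (the sets approximable in $\mu_z$-measure by $\calb_0$ form a $\sigma$-algebra containing $\calb_0$), so convergence $\mu_z(U_z^0B\bigtriangleup B)\to0$ over $B\in\calb_0$ propagates to all $B$ by the triangle inequality. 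Let $(A_n)_n$ be an a.i.\ sequence for $\sigma$ witnessing non-triviality, so that $\mu(U_n^0A_n\bigtriangleup A_n)\not\to0$; its a.i.\ property is again checked only over $g\in G$.

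Then I would extract a single subsequence. Pass first to a subsequence along which $\mu(U_n^0A_n\bigtriangleup A_n)\geq\delta$ for some fixed $\delta>0$. Each of the countably many quantities occurring in conditions (a) (over $\calb_0$), (c) (over $G$) and the a.i.\ condition for $(A_n)_n$ (over $G$) has the form $\int_Z\phi^{(j)}_n\,d\xi$ with $0\leq\phi^{(j)}_n\leq1$ and $\int_Z\phi^{(j)}_n\,d\xi\to0$ as $n\to\infty$. Choosing a further subsequence $(n_k)$ with $\int_Z\phi^{(j)}_{n_k}\,d\xi<2^{-k}$ for all $j\leq k$ and applying the Borel--Cantelli lemma, I obtain a subsequence along which every such integrand tends to $0$ for a.e.\ $z$. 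After this extraction, for a.e.\ $z$ the sequence $((U_{n_k})_z)_k$ satisfies (a) and (c), hence is a.c.\ for $\sigma_z$, and $((A_{n_k})_z)_k$ is a.i.\ for $\sigma_z$.

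Finally I would locate the surviving components. Set $\psi_k(z)=\mu_z(U_{n_k}^0A_{n_k}\bigtriangleup A_{n_k})$, so that $\int_Z\psi_k\,d\xi\geq\delta$ for all $k$; the reverse Fatou lemma (all $\psi_k\leq1$) gives $\int_Z\limsup_k\psi_k\,d\xi\geq\delta$, whence $W=\{z:\limsup_k\psi_k(z)>0\}$ has positive measure. For a.e.\ $z\in W$ the a.c.\ sequence $((U_{n_k})_z)_k$ together with the a.i.\ sequence $((A_{n_k})_z)_k$ satisfies $\mu_z((U_{n_k})_z^0A_{n_k}\bigtriangleup A_{n_k})=\psi_k(z)\not\to0$, so $((U_{n_k})_z)_k$ is a non-trivial a.c.\ sequence for $\sigma_z$, as desired. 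The main obstacle is organizational rather than conceptual: coordinating one subsequence that forces all the ``$\to0$'' a.c.\ and a.i.\ conditions onto a.e.\ fiber via an $L^1$-to-a.e.\ diagonalization, while the reverse Fatou step preserves the ``$\not\to0$'' non-triviality on a positive-measure set of fibers; one must also keep in hand the standard measurability of the disintegrated functions $z\mapsto\psi_k(z)$ and of $z\mapsto U_z$ throughout.
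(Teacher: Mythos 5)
Your proof is correct and takes essentially the same route as the paper's: disintegrate the a.c.\ and a.i.\ conditions over the ergodic decomposition, extract a subsequence along which the (countably many) defining conditions hold on a.e.\ fiber, and observe that non-triviality must survive on a positive-measure set of components --- the paper handles this last step by contradiction via dominated convergence rather than by reverse Fatou, which is equivalent. Your explicit treatment of the countable reductions (conditions (a) and (c), the countable generating algebra $\calb_0$ and its density in every $(X,\mu_z)$, and the fiberwise measure-preservation $(U^0)_*\mu_z=\mu_z$) fills in details that the paper's proof leaves implicit.
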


\begin{proof}
Let $\calb$ be the algebra of measurable subsets of $X$.
By assumption, we have an a.c.\ sequence $(U_n)_{n=1}^\infty$ in $[\calg]$ for $\sigma$ and an a.i.\ sequence $(A_n)_{n=1}^\infty$ for $\sigma$ such that the measure $\mu(U_n^0A_n\bigtriangleup A_n)$ does not converge to 0 as $n\to \infty$.
Taking a subsequence, we may assume that there exists a real number $c>0$ with $\mu(U_n^0A_n\bigtriangleup A_n)\to c$ as $n\to \infty$.
We first note that the equation,
\begin{equation}\label{zz}
\mu(V^0A\bigtriangleup A)=\int_Z\mu_z(V^0A\bigtriangleup A)\, d\xi(z)\quad \textrm{for\ any}\ V\in [\calg]\ \textrm{and\ any}\ A\in \calb,
\end{equation}
holds.
For any $A\in \calb$, we have $\mu(U_n^0A\bigtriangleup A)\to 0$ as $n\to \infty$ because $(U_n)_n$ is a.c.\ for $\sigma$.
It thus follows from equation (\ref{zz}) that after taking a subsequence of $(U_n)_n$, for a.e.\ $z\in Z$ and any $A\in \calb$, we have $\mu_z(U_n^0A\bigtriangleup A)\to 0$ as $n\to \infty$. 
Similarly, it is shown that taking a subsequence of $(U_n)_n$ again, for a.e.\ $z\in Z$ and any $V\in [\calg]$, we have
\[\mu_z(\{ \, x\in X\mid (U_n\cdot V)x\neq (V\cdot U_n)x\, \})\to 0\]
as $n\to \infty$.
By equation (\ref{zz}), there exists a subsequence $(A_{n_k})_k$ of $(A_n)_n$ such that for a.e.\ $z\in Z$, the sequence $(A_{n_k})_k$ is a.i.\ for $\sigma_z$.
If for a.e.\ $z\in Z$, we had $\mu_z(U_{n_k}^0A_{n_k}\bigtriangleup A_{n_k})\to 0$ as $k\to \infty$, then equation (\ref{zz}) would imply $\mu(U_{n_k}^0A_{n_k}\bigtriangleup A_{n_k})\to 0$ as $k\to \infty$.
This is a contradiction.
The lemma follows.
\end{proof}

Combining Theorem \ref{thm-js}, Lemma \ref{lem-free} and Lemma \ref{lem-erg}, we obtain Theorem \ref{thm-ac}.



\section{Stable actions of central extensions}\label{sec-ce}

We prove Theorem \ref{thm-stab} (i) asserting that any discrete group $G$ having a central subgroup $C$ such that the pair $(G, C)$ does not have property (T) is stable.
Recall that for any irreducible unitary representation $(\pi, \calh)$ of a discrete group $G$ and for any central element $c$ of $G$, the unitary $\pi(c)$ is a scalar multiple of the identity operator on $\calh$.
This follows from Schur's lemma (\cite[Theorem A.2.2]{bhv}).
We thus regard $\pi(c)$ as an element of the torus $\mathbb{T} =\{ \, z\in \mathbb{C} \mid |z|=1\, \}$ if there is no confusion.

\begin{lem}\label{lem-pi-c}
Let $G$ be a discrete group with a central subgroup $C$.
Suppose that the pair $(G, C)$ does not have property (T).
Then there exist a sequence $(\pi_k)_{k=1}^\infty$ in the unitary dual $\widehat{G}$ of $G$ and a sequence $(c_k)_{k=1}^\infty$ in $C$ such that $\pi_k\to 1_G$ in $\widehat{G}$ as $k\to \infty$; and for any $k\in \N$, we have $|\pi_k(c_k)-1|>1$ and $|\pi_k(c_l)-1|\to 0$ as $l\to \infty$, where the symbol $|\cdot |$ denotes the absolute value of a complex number.
\end{lem}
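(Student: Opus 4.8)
The plan is to reduce the statement to an extraction problem about characters of $C$ and then to carry out a diagonal construction governed by simultaneous Diophantine approximation.

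First I would unwind the failure of relative property (T) through the net criterion recalled above: there is a net in $\widehat{G}$ converging to $1_G$ that does not eventually admit a non-zero $C$-invariant vector, and passing to a cofinal subnet I may assume no member has a non-zero $C$-invariant vector. For an irreducible $\pi$ the $C$-invariant vectors form a $G$-invariant subspace (as $C$ is central), so by irreducibility having no such vector is equivalent to $\pi|_C\neq 1_C$. Since $G$ is countable, the sets $\{\pi:\exists$ a unit vector $v$ with $\max_{g\in F}\Vert\pi(g)v-v\Vert<\varepsilon\}$, indexed by finite $F\subset G$ and $\varepsilon>0$, form a countable neighbourhood basis at $1_G$, so I can extract from the subnet an honest sequence $(\sigma_j)_j$ with $\sigma_j\to 1_G$ and $\chi_j:=\sigma_j|_C$ a non-trivial character of $C$ for every $j$. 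Two elementary facts then organize the argument. Because each central element acts on the irreducible $\sigma_j$ by the scalar $\chi_j(c)\in\T$, the convergence $\sigma_j\to 1_G$ forces $|\chi_j(c)-1|\to 0$ for every fixed $c\in C$, i.e. $\chi_j\to 1_C$ pointwise. And every non-trivial character $\chi$ moves some element far: $\chi(C)$ is a non-trivial subgroup of $\T$, and any such subgroup contains a point $z$ with $|z-1|>1$, so a suitable power of an element outside $\ker\chi$ gives $c$ with $|\chi(c)-1|>1$.

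Next I would build $(\pi_k)$ as a subsequence $(\sigma_{j_k})$ of $(\sigma_j)$ together with $(c_k)$ by induction, keeping at each stage the two quantitative requirements $|\chi_{j_k}(c_k)-1|>1$ and $|\chi_{j_m}(c_k)-1|<1/k$ for all $m<k$. The first is exactly the desired bound $|\pi_k(c_k)-1|>1$; the second, read for a fixed $m$ while $k\to\infty$, yields $\pi_m(c_l)\to 1$ as $l\to\infty$; and $\pi_k\to 1_G$ holds for any subsequence. Thus the whole content is the inductive step: given the fixed non-trivial characters $\chi_{j_1},\dots,\chi_{j_{k-1}}$, find a large index $j$ and an element $c$ that is $(1/k)$-almost invariant under all of them yet moved past distance $1$ by $\chi_j$. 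Writing $\Psi\colon C\to\T^{k-1}$ for $c\mapsto(\chi_{j_1}(c),\dots,\chi_{j_{k-1}}(c))$, I want $c$ with $\Psi(c)$ close to the identity and $\chi_j(c)$ far from $1$. When some $\chi_j$ with $j$ large is non-trivial on the common kernel $N=\bigcap_{m<k}\ker\chi_{j_m}$, this is immediate: the far-moving fact applied to $\chi_j|_N$ produces $c\in N$, which is exactly invariant under every old character.

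The main obstacle is the degenerate alternative, in which from some stage on every large-index $\chi_j$ vanishes on $N$ and so factors through $A:=C/N$, a subgroup of $\T^{k-1}$. A finite $A$ is impossible, since then there are only finitely many characters of $A$ and one of them would coincide with $\chi_j$ for infinitely many $j$, contradicting $\chi_j\to 1_C$ and $\chi_j\neq 1_C$. For infinite $A$ the right mechanism is the one visible already when $C=\Z$ and the $\chi_j$ are irrational rotations: the set $S=\Psi^{-1}(\{v:|v_m-1|<1/k\})$ of admissible elements is the return set of an isometric rotation on the compact group $\overline{\Psi(C)}\subseteq\T^{k-1}$ to a neighbourhood of the identity, hence is relatively dense, and once the new character $\chi_j$ is chosen with $j$ so large that its values vary slowly across the bounded gaps of $S$, the values that $\chi_j$ takes on $S$ sweep around the circle and in particular reach a point at distance more than $1$ from $1$. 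Lifting the chosen element back to $C$ completes the step. Making this sweeping estimate uniform over the accumulating family $(\chi_j)$ is the delicate technical heart; once it is in place the induction runs and produces the two sequences.
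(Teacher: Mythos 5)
Your reduction to characters, the two elementary facts (pointwise convergence $\chi_j\to 1_C$, and the existence in any non-trivial subgroup of $\mathbb{T}$ of a point at distance $>1$ from $1$), and the inductive scheme with the requirements $|\chi_{j_k}(c_k)-1|>1$ and $|\chi_{j_m}(c_k)-1|<1/k$ for $m<k$ all match the structure of the paper's proof, and your Case 1 (some far-out character non-trivial on the common kernel $N$) and the finite-quotient subcase are correct. The genuine gap is the remaining subcase, which is the load-bearing one: the ``sweeping'' mechanism is only meaningful for $C=\mathbb{Z}$ (or groups where ``bounded gaps'' makes sense), since for a general countable group $C$ the set $S=\Psi^{-1}(U)$ carries no linear order and has no gaps to speak of, and you yourself flag the required estimate as missing (``the delicate technical heart''). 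As written, the inductive step is therefore not proved.

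What fills the hole --- and this is exactly what the paper does --- is a total-boundedness/pigeonhole argument that needs no case distinction at all. At stage $k$, the image $\Psi(C)=(\pi_{j_1}\times\cdots\times\pi_{j_{k-1}})(C)\subseteq\mathbb{T}^{k-1}$ is totally bounded, so there is a \emph{finite} set $F\subseteq C$ such that every $\Psi(c)$ lies coordinatewise within $1/k$ of some $\Psi(d)$ with $d\in F$. By pointwise convergence of the characters on the finite set $F$, choose a single large index $j$ with $|\chi_j(d)-1|<\sqrt{2}-1$ for all $d\in F$; pick $d'\in C$ with $|\chi_j(d')-1|>\sqrt{2}$ (any non-trivial subgroup of $\mathbb{T}$ contains such a point), and let $d\in F$ approximate $d'$ in the old coordinates. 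Then $c=d^{-1}d'$ satisfies $|\chi_{j_m}(c)-1|=|\chi_{j_m}(d')-\chi_{j_m}(d)|<1/k$ for all $m<k$, while $|\chi_j(c)-1|\geq|\chi_j(d')-1|-|\chi_j(d)-1|>\sqrt{2}-(\sqrt{2}-1)=1$. This is precisely the rigorous form of your sweep: syndeticity of the Bohr set $S$ is the statement that the finitely many translates $dS$, $d\in F$, cover $C$, and ``varying slowly across the gaps'' is the statement that $\chi_j$ is nearly $1$ on $F$. In particular, your worry about uniformity over the family $(\chi_j)$ is unfounded --- at each stage the covering data depend only on the finitely many characters already chosen and one needs only one sufficiently large new index --- and the dichotomy over the common kernel $N$ becomes unnecessary.
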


\begin{proof}
The pair $(G, C)$ does not have property (T).
We thus have a sequence $(\tau_n)_{n=1}^\infty$ in $\widehat{G}$ such that $\tau_n\to 1_G$ as $n\to \infty$ and for any $n\in \N$, the representation $\tau_n$ has no non-zero $C$-invariant vector.

We inductively find an increasing sequence $(n_k)_{k=1}^\infty$ in $\N$ and a sequence $(c_k)_{k=1}^\infty$ in $C$ satisfying the following conditions (a) and (b):
\begin{enumerate}
\item[(a)] For any $k\in \N$, we have $|\tau_{n_k}(c_k)-1|>1$.
\item[(b)] For any $k, l\in \N$ with $k<l$, we have $|\tau_{n_k}(c_l)-1|<1/l$.
\end{enumerate}
We set $n_1=1$, and pick $c_1\in C$ with $|\tau_1(c_1)-1|>1$.
Such $c_1$ exists because the group $\tau_1(C)$ is non-trivial.
Suppose that we have an increasing sequence $n_1, n_2,\ldots, n_k$ in $\N$ and a sequence $c_1, c_2,\ldots, c_k$ in $C$ satisfying conditions (a) and (b) for them.
Pick a finite subset $F$ of $C$ such that the subset $(\tau_{n_1}\times \cdots \times \tau_{n_k})(C)$ of $\mathbb{T}^k$ is contained in the set
\[\bigcup_{d\in F}\{ \, (z_1,\ldots, z_k)\in \mathbb{T}^k\mid |z_i-\tau_{n_i}(d)|<1/(k+1)\ \textrm{for\ any}\ i=1,\ldots, k\, \}.\]
By the convergence $\tau_n\to 1_G$ as $n\to \infty$, there exists $n_{k+1}\in \N$ such that $n_{k+1}>n_k$ and for any $d\in F$, we have $|\tau_{n_{k+1}}(d)-1|<\sqrt{2}-1$.
The group $\tau_{n_{k+1}}(C)$ is non-trivial, and there thus exists $d_{k+1}\in C$ with $|\tau_{n_{k+1}}(d_{k+1})-1|>\sqrt{2}$.
By the choice of $F$, there exists $d\in F$ such that for any $i=1,\ldots, k$, we have $|\tau_{n_i}(d_{k+1})-\tau_{n_i}(d)|<1/(k+1)$.
We set $c_{k+1}=d^{-1}d_{k+1}$.
For any $i=1,\ldots, k$, we then have $|\tau_{n_i}(c_{k+1})-1|<1/(k+1)$ and
\begin{align*}
|\tau_{n_{k+1}}(c_{k+1})-1| & =|\tau_{n_{k+1}}(d_{k+1})-\tau_{n_{k+1}}(d)|\geq |\tau_{n_{k+1}}(d_{k+1})-1|-|\tau_{n_{k+1}}(d)-1|\\
&>\sqrt{2}-(\sqrt{2}-1)=1.
\end{align*}
The induction is completed.

Setting $\pi_k=\tau_{n_{k+1}}$ for $k\in \N$, we obtain desired sequences $(\pi_k)_k$ and $(c_k)_k$.
\end{proof}

\begin{proof}[Proof of Theorem \ref{thm-stab} (i)]
Let $G$ be a discrete group with a central subgroup $C$.
Suppose that the pair $(G, C)$ does not have property (T).
The aim is to show that $G$ is stable.
By Lemma \ref{lem-pi-c}, there exist a sequence $(\pi_n)_{n=1}^\infty$ in $\widehat{G}$ and a sequence $(c_n)_{n=1}^\infty$ in $C$ such that $\pi_n\to 1_G$ in $\widehat{G}$ as $n\to \infty$; for any $n\in \N$, we have $|\pi_n(c_n)-1|>1$; and for any $n\in \N$, we have $|\pi_n(c_m)-1|\to 0$ as $m\to \infty$.
For $n\in \N$, let $\calh_n$ denote the Hilbert space on which $\pi_n(G)$ acts.
The convergence $\pi_n\to 1_G$ as $n\to \infty$ implies that there exists a unit vector $\xi_n\in \calh_n$ for $n\in \N$ such that for any $g\in G$, we have $\langle \pi_n(g)\xi_n, \xi_n\rangle \to 1$ as $n\to \infty$.
For each $n\in \N$, applying Lemma \ref{lem-gauss} to $\pi_n$ and $\xi_n$, we obtain a p.m.p.\ action $G\c (\Omega_n, \nu_n)$ and a measurable subset $A_n$ of $\Omega_n$ with $\nu_n(A_n)=1/2$ satisfying the following conditions (1)--(3):
\begin{enumerate}
\item[(1)] For any $n\in \N$, we have $\nu_n(c_nA_n\bigtriangleup A_n)>1/3$.
\item[(2)] For any $g\in G$, we have $\nu_n(gA_n\bigtriangleup A_n)\to 0$ as $n\to \infty$.
\item[(3)] For any $n\in \N$ and any measurable subset $A$ of $\Omega_n$, we have $\nu_n(c_mA\bigtriangleup A)\to 0$ as $m\to \infty$.
\end{enumerate}
Condition (1) holds because for any $n\in \N$, the inequality ${\rm Re}\, \langle \pi_n(c_n)\xi_n, \xi_n\rangle ={\rm Re}\, \pi_n(c_n)<1/2$ holds.
We set $(\Omega, \nu)=\prod_{n=1}^\infty (\Omega_n, \nu_n)$, and define a p.m.p.\ action $\sigma \colon G\c (\Omega, \nu)$ as the diagonal action.
For $k\in \N$, we set $B_k=\{ \, (\omega_n)_n\in \Omega \mid \omega_k\in A_k\, \}$.
By condition (2), the sequence $(B_n)_{n=1}^\infty$ is a.i.\ for $\sigma$.
By condition (3), for any measurable subset $B$ of $\Omega$, we have $\nu(c_mB\bigtriangleup B)\to 0$ as $m\to \infty$.
By condition (1), the measure $\nu(c_nB_n\bigtriangleup B_n)$ does not converge to 0 as $n\to \infty$.
The sequence $(c_n)_{n=1}^\infty$ in $C$ therefore defines a non-trivial a.c.\ sequence for $\sigma$.
By Theorem \ref{thm-ac}, $G$ is stable.
\end{proof}



\section{Stable actions of central quotients}\label{sec-cq}

This section is devoted to the following:

\begin{proof}[Proof of Theorem \ref{thm-stab} (ii)]
We first fix the notation, and then give an outline of the proof.
Let $1\to C\to G\to \Gamma \to 1$ be an exact sequence of discrete groups such that $C$ is central in $G$ and the pair $(G, C)$ has property (T).
Suppose that there is an ergodic, free, p.m.p.\ and stable action $\sigma \colon G\c (X, \mu)$.
The aim is to show that $\Gamma$ also has such a stable action.
We set $\calg =G\ltimes (X, \mu)$.
Since $\sigma$ is stable, there exist a sequence $(U_n)_{n=1}^\infty$ in $[\calg]$ and an a.i.\ sequence $(A_n)_{n=1}^\infty$ for $\sigma$ satisfying the following conditions (1)--(3):
\begin{enumerate}
\item[(1)] For any measurable subset $A$ of $X$, we have $\mu(U_n^0A\bigtriangleup A)\to 0$ as $n\to \infty$.
\item[(2)] For any $V\in [\calg]$, we have $\mu(\{ \, x\in X\mid (U_n\cdot V)x\neq (V\cdot U_n)x\, \})\to 0$ as $n\to \infty$.
\item[(3)] For any $n\in \N$, the equation $U_n^0A_n=X\setminus A_n$ holds.
\end{enumerate}

We define $\cala =L^1(G\times X, \R)$ as the Banach space of real-valued integrable functions on $G\times X$, where $G\times X$ is equipped with the product measure of the counting measure on $G$ and $\mu$.
We also define a linear isometric representation $(\pi, \cala)$ of $G$ by
\[(\pi(g)f)(h, x)=f(g^{-1}hg, g^{-1}x)\quad \textrm{for}\ g, h\in G,\ f\in \cala \ \textrm{and}\ x\in X.\]
For a measurable subset $A$ of $G\times X$, we define $\chi_A\colon G\times X\to \{ 0, 1\}$ as the characteristic function on $A$.
For each $V\in [\calg]$, we have the unit vector $v\in \cala$ defined by the sum
\[v=\sum_{g\in G}\chi_{\{ g\} \times V^{-1}(g)}.\]
Let us call this vector $v$ the vector of $\cala$ {\it corresponding} to $V$.
We note that for any $V\in [\calg]$ with $v$ the vector of $\cala$ corresponding to $V$ and for any $g\in G$, the vector of $\cala$ corresponding to $g\cdot V\cdot g^{-1}\in [\calg]$ is $\pi(g)v$, where $g$ is identified with the constant map on $X$ with the value $g$.
For $n\in \N$, let $u_n$ denote the vector of $\cala$ corresponding to $U_n$.
Condition (2) implies that for any $g\in G$, we have $\Vert \pi(g)u_n-u_n\Vert \to 0$ as $n\to \infty$.
Namely, the sequence $(u_n)_n$ in $\cala$ is asymptotically $G$-invariant under $\pi$.

\medskip

\noindent {\bf An outline of the rest of the proof.}
Let $\cala^C$ denote the subspace of $\cala$ of $C$-invariant vectors.
The pair $(G, C)$ has property (T).
By Lemma \ref{lem-p}, we can find a vector $u_n'$ in $\cala^C$ close to $u_n$ for any large $n$.
Since $u_n$ corresponds to an element of $[\calg]$, the vector $u_n'$ can be approximated by a vector $v_n$ of $\cala^C$ corresponding to an element of $[\calg]$.
Let $V_n$ be the element of $[\calg]$ with $v_n$ corresponding to $V_n$.

Let $\theta \colon (X, \mu)\to (Z, \xi)$ be the ergodic decomposition for the action $C\c (X, \mu)$.
We have the natural action $\tau \colon \Gamma \c (Z, \xi)$.
Let $\rho \colon G\to \Gamma$ be the quotient homomorphism.
The map $V_n\colon X\to G$ is $C$-invariant, and thus induces the map $\bar{V}_n\colon Z\to \Gamma$ with $\bar{V}_n(\theta(x))=\rho(V_n(x))$ for a.e.\ $x\in X$.
The map $\bar{V}_n$ belongs to the full group of $\Gamma \ltimes (Z, \xi)$.
Since $V_n$ is close to $U_n$, the sequence $(V_n)_n$ is a non-trivial a.c.\ sequence for $\sigma$.
It follows that $(\bar{V}_n)_n$ is a non-trivial a.c.\ sequence for $\tau$.
By Theorem \ref{thm-ac}, we conclude that $\Gamma$ is stable.

\medskip

Following this outline, we give a precise proof.
As in Subsubsection \ref{subsubsec-lp}, let $P\colon \cala \to \cala$ be the linear map defined by
\[P(f)(g, x)=\mu_{\theta(x)}(f(g, \cdot))\quad \textrm{for}\ f\in \cala,\ g\in G\ \textrm{and}\ x\in X.\]
The map $P$ is the projection onto $\cala^C$.
For $n\in \N$, we set $u_n'=Pu_n$.
By Lemma \ref{lem-p}, taking a subsequence of $(u_n)_n$ if necessary, we may assume that for any $n\in \N$, the inequality
\begin{equation}\label{uprime}
\Vert u_n-u_n'\Vert <1/n
\end{equation}
holds.
We now approximate $u_n'$ by a vector in $\cala^C$ corresponding to an element of $[\calg]$.

Let $a$ be a real number with $0\leq a\leq 1$.
We define a function $E_a\colon [0, 1]\to \{ 0, 1\}$ by
\[E_a(t)=\begin{cases}
1 & \textrm{if}\ a\leq t\leq 1\\
0 & \textrm{if}\ 0\leq t<a.
\end{cases}\]
For a function $f\colon G\times X\to [0, 1]$, we define a function $f_a\colon G\times X\to \{ 0, 1\}$ by $f_a=E_a\circ f$.
The following argument using these functions is well known (e.g., see the proof of ``(ii) $\Rightarrow$ (iii)" in \cite[Theorem 5.14]{km}).

Fix $n\in \N$.
For a.e.\ $(g, x)\in G\times X$, we have $u_n(g, x)\in \{ 0, 1\}$ and $0\leq u_n'(g, x)\leq 1$.
For any number $a$ with $0<a\leq 1$, the equation $(u_n)_a=u_n$ holds, and the function $(u_n')_a$ is the characteristic function on the set $\{ u_n'\geq a\}$, and belongs to $\cala^C$ because $u_n'$ belongs to $\cala^C$.
We have the inequality
\begin{align*}
\int_0^1\Vert (u_n)_a-(u_n')_a\Vert \, da & =\int_0^1\left( \sum_{g\in G}\int_X|(u_n)_a(g, x)-(u_n')_a(g, x)|\, d\mu(x)\right)\, da\\
& =\sum_{g\in G}\int_X\left( \int_0^1|(u_n)_a(g, x)-(u_n')_a(g, x)|\, da \right)\, d\mu(x)\nonumber \\ 
& =\sum_{g\in G}\int_X |u_n(g, x)-u_n'(g, x)|\, d\mu(x)\nonumber \\
& =\Vert u_n-u_n'\Vert <1/n.\nonumber 
\end{align*}
There thus exists a number $a$ such that $0<a<1$ and $\Vert (u_n)_a-(u_n')_a\Vert <1/n$.
We fix such $a$, and set $u_n''=(u_n')_a$.
We have the inequality
\begin{equation}\label{uuprime2}
\Vert u_n-u_n''\Vert <1/n.
\end{equation}
The functions $u_n$ and $u_n''$ on $G\times X$ are regarded as maps from $X$ into $\ell^1(G)$, so that for a.e.\ $x\in X$, we have $u_n(x)=u_n(\cdot, x)$ and $u_n''(x)=u_n''(\cdot, x)$.
Since $u_n$ corresponds to an element of $[\calg]$, for a.e.\ $x\in X$, the function $u_n(x)$ is the Dirac function on an element of $G$.
The function $u_n''$ on $G\times X$ is a characteristic function on a measurable subset of $G\times X$.
For a.e.\ $x\in X$, the function $u_n''(x)$ is thus regarded as a sum of Dirac functions on finitely many, mutually distinct elements of $G$.
Namely, it is of the form $\delta_{g_1}+\delta_{g_2}+\cdots +\delta_{g_k}$ with $g_1, g_2,\ldots, g_k$ mutually distinct elements of $G$, where for $g\in G$, we denote by $\delta_g\in \ell^1(G)$ the Dirac function on $g$.
We set
\[E_1=\{ \, x\in X\mid  u_n''(x)\ {\rm is\ either\ 0\ or\ a\ sum\ of\ at\ least\ two\ Dirac\ functions.}\, \}.\]
We define a map $U_n''\colon X\setminus E_1\to G$ so that for any $x\in X$, $u_n''(x)$ is the Dirac function on $U_n''(x)$.
We also define a map $\varphi \colon X\setminus E_1\to X$ by $\varphi(x)=U_n''(x)x$ for $x\in X\setminus E_1$, and set
\[E_2=\{ \, x\in X\setminus E_1\mid |\varphi^{-1}(\varphi(x))|\geq 2\, \} \quad \textrm{and}\quad E=\{ \, x\in X\mid u_n(x)\neq u_n''(x)\, \}.\] 
The inclusion $E_1\subset E$ holds.
The map $u_n''\colon X\to \ell^1(G)$ is $C$-invariant, i.e., for any $c\in C$ and a.e.\ $x\in X$, we have $u_n''(cx)=u_n''(x)$.
The set $E_1$ is therefore $C$-invariant.
The map $\varphi$ is $C$-equivariant, i.e., for any $c\in C$ and a.e.\ $x\in X$, we have $\varphi(cx)=c\varphi(x)$.
The set $E_2$ is therefore $C$-invariant.

\begin{claim}
There exists a partition of $E_2$ into its $C$-invariant measurable subsets,
\begin{equation}\label{e2}
E_2=E_3\sqcup \left( \bigsqcup_{n\in N}F_n\right),
\end{equation}
with $N$ a countable set, such that $\varphi$ is injective on $E_3$ and on $F_n$ for any $n\in N$, and the equation $\varphi(E_3)=\varphi(E_2)$ holds up to null sets.
\end{claim}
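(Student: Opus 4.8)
The plan is to realize the partition as the level sets of a single $C$-invariant, integer-valued ``rank'' function on $E_2$, exploiting the freeness of $\sigma$ together with a fixed enumeration of $G$. The starting observation is that, because $\sigma$ is free, every fibre of $\varphi$ is canonically labelled by group elements: if $\varphi(x')=y$ then $U_n''(x')x'=y$, so $x'=U_n''(x')^{-1}y$, and the assignment $x'\mapsto U_n''(x')$ injects the fibre $\varphi^{-1}(y)$ into $G$. Conversely, for $g\in G$ the point $g^{-1}y$ lies in this fibre precisely when $g^{-1}y\in E_2$ and $U_n''(g^{-1}y)=g$. In particular each fibre is countable and is carried bijectively onto a subset of $G$.

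Fix an enumeration $G=\{g_0=e,g_1,g_2,\dots\}$ and write $\iota(g)$ for the index of $g\in G$. For $x\in E_2$ I would define
\[
\mathrm{rk}(x)=\#\{\,g\in G : \iota(g)<\iota(U_n''(x)),\ g^{-1}\varphi(x)\in E_2,\ U_n''(g^{-1}\varphi(x))=g\,\},
\]
that is, the number of points in the fibre of $x$ whose associated group element precedes $U_n''(x)$ in the enumeration. For each fixed $g$ the condition on $x$ inside the braces cuts out a Borel subset of $E_2$ (a preimage under the Borel maps $\varphi$, $U_n''$ and left translation by $g$), so $\mathrm{rk}$ is a countable sum of indicator functions, hence Borel. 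Setting $B_i=\mathrm{rk}^{-1}(i)$ produces a Borel partition $E_2=\bigsqcup_i B_i$.

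It then remains to verify the three required properties, the only delicate one being $C$-invariance. Since the group elements labelling a fibre are distinct, so are their indices, whence at most one point of each fibre has a given rank; thus $\varphi$ is injective on each $B_i$, and the unique minimal-index point of each fibre lies in $B_0$, giving $\varphi(B_0)=\varphi(E_2)$ up to null sets. For invariance, fix $c\in C$ and $x\in E_2$. As $\varphi$ is $C$-equivariant, the map $x'\mapsto cx'$ is a bijection from $\varphi^{-1}(\varphi(x))$ onto $\varphi^{-1}(c\varphi(x))=\varphi^{-1}(\varphi(cx))$; and since $U_n''$ is $C$-invariant we have $U_n''(cx')=U_n''(x')$, so this bijection preserves the labelling group element, hence the enumeration index, hence the rank. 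Therefore $\mathrm{rk}(cx)=\mathrm{rk}(x)$ and each $B_i$ is $C$-invariant. Taking $E_3=B_0$ and letting $\{F_n\}_{n\in N}$ enumerate the nonempty sets among $B_1,B_2,\dots$ yields the claim. The main obstacle is essentially bookkeeping: checking that the fibre-to-$G$ labelling is a genuine Borel bijection and that the explicit rank formula is measurable; once this is in place, injectivity, surjectivity of $E_3$ and $C$-invariance follow directly from freeness of $\sigma$, the $C$-invariance of $U_n''$ and the $C$-equivariance of $\varphi$.
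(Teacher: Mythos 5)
Your proof is correct, but it takes a genuinely different route from the paper. The paper descends to the ergodic decomposition $\theta\colon (X,\mu)\to (Z,\xi)$ of the $C$-action: since $U_n''$ is $C$-invariant it induces a map $\bar{U}_n''$ on $Z$, and $\varphi$ induces $\psi(z)=(\rho\circ\bar{U}_n''(z))z$ on $Z$; the paper then invokes the standard partition result for countable-to-one measurable maps to split $Z$ into pieces on which $\psi$ is injective, and pulls the pieces back through $\theta$ --- which makes $C$-invariance automatic --- at the cost of having to \emph{prove} injectivity of $\varphi$ on the pulled-back pieces (via freeness and $C$-invariance of $U_n''$, exactly the computation you use) and to prove $\varphi(E_3)=\varphi(E_2)$ by a measure-comparison argument ($\varphi$ and $\psi$ preserve measure on sets where they are injective, so the inclusions forced by $\theta\circ\varphi=\psi\circ\theta$ are equalities up to null sets). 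You instead stay upstairs on $X$ and build the partition explicitly: freeness labels each $\varphi$-fibre injectively by group elements, and your rank function (the number of fibre labels preceding the given label in a fixed enumeration of $G$) is finite, integer-valued and measurable, so its level sets give the partition; injectivity on each piece and the identity $\varphi(B_0)=\varphi(E_2)$ are then immediate and in fact hold pointwise rather than merely up to null sets, while $C$-invariance is the one thing requiring an argument --- which your observation that $x'\mapsto cx'$ is a label-preserving bijection between fibres settles. In effect you reprove the countable-to-one selection fact by hand, using the group structure; this makes the claim self-contained and avoids both the abstract selection principle and the measure comparison, whereas the paper's route is shorter given that $Z$, $\rho$, $\bar{U}_n''$ and $\psi$ are needed anyway in the remainder of the proof of Theorem 1.1 (ii). One small point of hygiene: the identities you use ($C$-invariance of $U_n''$, $C$-equivariance of $\varphi$, freeness) hold a.e., so you should first discard a $C$-invariant null set to make them hold everywhere; since $C$ and $G$ are countable this is routine and consistent with the paper's stated conventions.
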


\begin{proof}
Recall that we have the ergodic decomposition for the action $C\c (X, \mu)$, denoted by $\theta \colon (X, \mu)\to (Z, \xi)$.
We have the quotient homomorphism $\rho \colon G\to \Gamma$ and the ergodic p.m.p.\ action $\tau \colon \Gamma \c (Z, \xi)$.
We set $Y=X\setminus E_1$.
Since $E_1$ is $C$-invariant, there exists a measurable subset $\bar{Y}$ of $Z$ with $\theta^{-1}(\bar{Y})=Y$.
The map $U_n''\colon Y\to G$ is $C$-invariant, and thus induces the map $\bar{U}_n''\colon \bar{Y}\to G$ with $\bar{U}_n''(\theta(x))=U_n''(x)$ for a.e.\ $x\in Y$.

We define a map $\psi \colon \bar{Y}\to Z$ by $\psi(z)=(\rho \circ \bar{U}_n''(z))z$ for $z\in \bar{Y}$.
For any $z\in Z$, the set $\psi^{-1}(z)$ is countable.
We have a partition of $\bar{Y}$ into its measurable subsets,
\[\bar{Y}=S\sqcup \left( \bigsqcup_{n\in N}R_n\right),\]
with $N$ a countable set, such that $\psi$ is injective on $S$ and on $R_n$ for any $n\in N$, and the equation $\psi(S)=\psi(\bar{Y})$ holds.
We set $E_3=\theta^{-1}(S)$, and set $F_n=\theta^{-1}(R_n)$ for $n\in N$.
We then have equation (\ref{e2}).

We show that $\varphi$ is injective on $E_3$.
Pick two elements $x$, $y$ of $E_3$ with $\varphi(x)=\varphi(y)$, i.e., $U_n''(x)x=U_n''(y)y$.
Applying $\theta$ to this equation, we obtain $\psi(\theta(x))=\psi(\theta(y))$.
Since $\psi$ is injective on $S$, we have $\theta(x)=\theta(y)$.
We also have $U_n''(x)=\bar{U}_n''(\theta(x))=\bar{U}_n''(\theta(y))=U_n''(y)$, and therefore $U_n''(x)x=U_n''(y)y=U_n''(x)y$.
The equation $x=y$ holds because the action $\sigma \colon G\c (X, \mu)$ is free.
It follows that $\varphi$ is injective on $E_3$.

For any $n\in N$, replacing $E_3$ and $S$ by $F_n$ and $R_n$, respectively, in the last paragraph, we can also show that $\varphi$ is injective on $F_n$.
For a.e.\ $x\in Y$, the equation $\theta(\varphi(x))=\psi(\theta(x))$ holds by the definition of $\psi$.
It follows that the inclusion $\varphi(E_3)\subset \theta^{-1}(\psi(S))$ holds, and that for any $n\in N$, the inclusion $\varphi(F_n)\subset \theta^{-1}(\psi(R_n))$ holds.
Comparing the measures, up to null sets, we have the equations $\varphi(E_3)=\theta^{-1}(\psi(S))$ and $\varphi(F_n)=\theta^{-1}(\psi(R_n))$ for any $n\in N$.
Up to null sets, for any $n\in N$, the inclusion $\varphi(F_n)=\theta^{-1}(\psi(R_n))\subset \theta^{-1}(\psi(S))=\varphi(E_3)$ therefore holds, and the equation $\varphi(E_3)=\varphi(E_2)$ follows.
\end{proof}

For any $x\in X$, the set $\varphi^{-1}(x)$ is countable.
The vector $u_n$ corresponds to an element of $[\calg]$.
It follows that for a.e.\ $x\in E_2$, at most one point of $\varphi^{-1}(\varphi(x))$ belongs to $X\setminus E$.
The inequality $\mu(E_2\setminus E_3)\leq \mu(E)$ therefore holds.
For a.e.\ $x\in E$, we have $\Vert u_n(x)-u_n''(x)\Vert \geq 1$, where the norm is that in $\ell^1(G)$.
By inequality (\ref{uuprime2}), we have
\begin{equation}\label{ineq-e}
\mu(E)\leq \Vert u_n-u_n''\Vert <1/n\ {\rm and\ thus}\ \mu(E_2\setminus E_3)<1/n.
\end{equation}

We set $E_4=E_1\cup (E_2\setminus E_3)$.
Modifying the map $u_n''\colon X\to \ell^1(G)$ on $E_4$, we construct a vector in $\cala^C$ corresponding to an element of $[\calg]$.
The map $\varphi$ is injective on $X\setminus E_4$.
We set $F=\varphi(X\setminus E_4)$.
The equation $\mu(F)=\mu(X\setminus E_4)$ holds.
By ergodicity of $\sigma$, there exists a $C$-invariant measurable map $T\colon E_4\to G$ such that the map from $E_4$ into $X$ sending each $x\in E_4$ to $T(x)x$ is an isomorphism onto $X\setminus F$.
We define a map $V_n\colon X\to G$ by $V_n=U_n''$ on $X\setminus E_4$ and $V_n=T$ on $E_4$.
This $V_n$ is an element of $[\calg]$.
Let $v_n$ denote the vector of $\cala$ corresponding to $V_n$.
We have the inequality
\begin{align}\label{uprime2}
\Vert u_n''-v_n\Vert & =\int_{E_4}\Vert u_n''(x)-v_n(x)\Vert \, d\mu(x)\\
& \leq \int_{E_4}(\Vert u_n''(x)-u_n(x)\Vert +\Vert u_n(x)-v_n(x)\Vert)\, d\mu(x)\nonumber \\
& < (1/n)+2\mu(E_4)=(1/n)+2\mu(E_1)+2\mu(E_2\setminus E_3)<5/n.\nonumber
\end{align}
The map $V_n$ is $C$-invariant, and thus induces the map $\bar{V}_n\colon Z\to \Gamma$ with $\bar{V}_n(\theta(x))=\rho(V_n(x))$ for a.e.\ $x\in X$.
We set $\cal{Q}=\Gamma \ltimes (Z, \xi)$.
The map $\bar{V}_n$ belongs to $[\cal{Q}]$.

\begin{claim}\label{claim-q-ac}
The sequence $(\bar{V}_n)_{n=1}^\infty$ in $[\cal{Q}]$ is a.c.\ for $\tau$.
\end{claim}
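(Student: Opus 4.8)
The plan is to derive the asymptotic centrality of $(\bar V_n)_n$ for $\tau$ from that of $(U_n)_n$ for $\sigma$ in three stages: first control the distance between $V_n$ and $U_n$ in $[\calg]$, then transfer asymptotic centrality from $(U_n)_n$ to $(V_n)_n$ in $[\calg]$, and finally descend through $\theta$. Throughout I would use the criterion, noted in Subsection \ref{subsec-stab}, that a sequence in a full group is a.c.\ if and only if it satisfies conditions (a) and (c). As a first step I would record that $\mu(\{\, x\in X\mid U_n(x)\neq V_n(x)\, \})\to 0$ as $n\to\infty$. Since both $u_n$ and $v_n$ correspond to elements of $[\calg]$, for a.e.\ $x$ they are Dirac functions on $U_n(x)$ and $V_n(x)$ in $\ell^1(G)$, so that $\Vert u_n-v_n\Vert =2\mu(\{\, x\in X\mid U_n(x)\neq V_n(x)\, \})$. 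Combining inequalities (\ref{uuprime2}) and (\ref{uprime2}) by the triangle inequality yields $\Vert u_n-v_n\Vert <6/n$, whence $\mu(\{\, x\in X\mid U_n(x)\neq V_n(x)\, \})<3/n\to 0$.

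Next I would check that $(V_n)_n$ is a.c.\ for $\sigma$ in $[\calg]$. For condition (a), note that $U_n^0$ and $V_n^0$ agree off the set $D_n=\{\, x\in X\mid U_n(x)\neq V_n(x)\, \}$, and a short argument gives $\mu(U_n^0B\bigtriangleup V_n^0B)\leq 2\mu(D_n)$ for any measurable $B$: any point of $U_n^0B\setminus V_n^0B$ is the $U_n^0$-image of a point of $D_n$, and symmetrically, so that $U_n^0B\bigtriangleup V_n^0B\subset U_n^0D_n\cup V_n^0D_n$. Together with condition (1) this yields $\mu(V_n^0B\bigtriangleup B)\to 0$. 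For condition (c), I would use that $\pi(g)v_n$ is the vector corresponding to $g\cdot V_n\cdot g^{-1}$; asymptotic $G$-invariance of $(u_n)_n$ under $\pi$ together with $\Vert u_n-v_n\Vert\to 0$ gives $\Vert \pi(g)v_n-v_n\Vert\to 0$, and the Dirac-function description turns this into $\mu(\{\, x\in X\mid (g\cdot V_n\cdot g^{-1})(x)\neq V_n(x)\, \})\to 0$. Since $g\cdot V_n\cdot g^{-1}=U_g\cdot V_n\cdot U_g^\dashv$ and $U_g^0$ is measure preserving, this measure equals that of the non-commutation set of $V_n$ and $U_g$, which is exactly condition (c) for $(V_n)_n$.

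Finally I would descend through $\theta$. The key equivariance is $\theta\circ V_n^0=\bar V_n^0\circ \theta$, which follows from $\bar V_n(\theta(x))=\rho(V_n(x))$ and the fact that $\tau$ is the descent of $\sigma$; moreover, since $V_n$ is $C$-invariant and $C$ is central, $V_n^0$ commutes with the $C$-action and hence carries $C$-invariant sets to $C$-invariant sets. For condition (a) for $(\bar V_n)_n$, given a measurable $\bar B\subset Z$ I would put $B=\theta^{-1}(\bar B)$, so that $V_n^0B=\theta^{-1}(\bar V_n^0\bar B)$ and $\xi(\bar V_n^0\bar B\bigtriangleup \bar B)=\mu(V_n^0B\bigtriangleup B)\to 0$. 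For condition (c), I would fix $\gamma\in\Gamma$ and a lift $g\in G$ with $\rho(g)=\gamma$; if $V_n$ and $U_g$ commute at $x$, then applying $\rho$ shows $\bar V_n$ and $U_\gamma$ commute at $\theta(x)$, so the $\theta$-preimage of the non-commutation set of $\bar V_n$ and $U_\gamma$ is contained in that of $V_n$ and $U_g$; hence its $\xi$-measure is bounded by the $\mu$-measure of the latter, which tends to $0$. The (a)+(c) criterion applied to $\tau$ then gives the claim. I expect this last step to be the main obstacle, since it requires carefully matching set operations under $\theta$---in particular the identity $V_n^0\theta^{-1}(\bar B)=\theta^{-1}(\bar V_n^0\bar B)$, which relies on both the equivariance of $\theta$ and the fact that $C$-invariant sets are precisely the $\theta$-pullbacks.
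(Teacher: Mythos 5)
Your proposal is correct and follows essentially the same route as the paper: both rest on the bound $\mu(\{\, x\in X\mid U_nx\neq V_nx\, \})<3/n$ obtained by combining inequalities (\ref{uuprime2}) and (\ref{uprime2}), and both verify the (a)+(c) criterion for $(\bar V_n)_n$ by pulling measurable sets and non-commutation sets back through $\theta$ (using $C$-invariance of $V_n$ and the equivariance $\theta\circ V_n^0=\bar V_n^0\circ\theta$) and comparing $V_n$ with $U_n$ via conditions (1) and (2). The only cosmetic difference is one of ordering: you first establish that $(V_n)_n$ is a.c.\ for $\sigma$ in $[\calg]$ (for condition (c) exploiting the identity $\Vert \pi(g)v_n-v_n\Vert =2\mu(\{\, x\in X\mid (g\cdot V_n\cdot g^{-1})x\neq V_nx\, \})$) and then descend to $Z$, whereas the paper descends each condition first and carries out the comparison with $U_n$ at the level of $X$ by a pointwise case analysis.
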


\begin{proof}
By inequalities (\ref{ineq-e}) and (\ref{uprime2}), for any $n\in \N$, the inequality
\begin{align}\label{16}
\mu(\{ \, x\in X \mid U_nx\neq V_nx\, \})=\mu(\{ \, x\in X\mid u_n(x)\neq v_n(x)\, \})=\frac{\, \Vert u_n-v_n\Vert \, }{2}\\
\leq \frac{\, \Vert u_n-u_n''\Vert +\Vert u_n''-v_n\Vert \, }{2}< \frac{1}{\, 2\, }\left( \frac{1}{\, n\, }+\frac{5}{\, n\, }\right)=\frac{3}{\, n\, }\nonumber
\end{align}
holds.
For any $n\in \N$ and any measurable subset $A$ of $Z$, putting $B=\theta^{-1}(A)$, we have
\[\xi(\bar{V}_n^0A\bigtriangleup A)  =\mu(V_n^0B\bigtriangleup B)=2\mu(V_n^0B\setminus B) < 2\mu(U_n^0B\setminus B)+(6/n).\]
Condition (1) implies $\xi(\bar{V}_n^0A\bigtriangleup A)\to 0$ as $n\to \infty$.

Pick $\gamma \in \Gamma$ and $g\in G$ with $\rho(g)=\gamma$.
We identify $\gamma$ with the element of $[\cal{Q}]$ defined as the constant map on $Z$ with the value $\gamma$.
Similarly, we identify $g$ with the element of $[\calg]$ defined as the constant map on $X$ with the value $g$.
For any $n\in \N$, the inequality
\begin{align*}
\xi(\{ \, z\in Z\mid (\gamma \cdot \bar{V}_n)z\neq (\bar{V}_n\cdot \gamma)z\, \})\leq \mu(\{ \, x\in X\mid (g\cdot V_n)x\neq (V_n\cdot g)x\, \})\\
< \mu(\{ \, x\in X\mid (g\cdot U_n)x\neq (U_n\cdot g)x\, \})+(6/n)
\end{align*}
holds, where the last inequality holds because for any $x\in X$ with $(g\cdot V_n)x\neq (V_n\cdot g)x$, at least one of the three conditions, $(g\cdot U_n)x\neq (U_n\cdot g)x$, $U_nx\neq V_nx$ or $(U_n\cdot g)x\neq (V_n\cdot g)x$, holds.
Condition (2) implies $\xi(\{ \, z\in Z\mid (\gamma \cdot \bar{V}_n)z\neq (\bar{V}_n\cdot \gamma)z\, \})\to 0$ as $n\to \infty$.
\end{proof}

Finally, we construct an a.i.\ sequence for $\tau$.
We define $\calbb =L^1(X, \R)$ as the Banach space of real-valued integrable functions on $(X, \mu)$.
For a measurable subset $A$ of $X$, we denote by $\chi_A\colon X\to \{ 0, 1\}$ the characteristic function on $A$.
Recall that we have the a.i.\ sequence $(A_n)_{n=1}^\infty$ for $\sigma$.
The sequence $(\chi_{A_n})_{n=1}^\infty$ in $\calbb$ is asymptotically $G$-invariant under the Koopman representation of $G$ on $\calbb$.
As in Subsubsection \ref{subsubsec-lp}, we define a linear map $Q\colon \calbb \to \calbb$ by $Q(f)(x)=\mu_{\theta(x)}(f)$ for $f\in \calbb$ and $x\in X$.
The map $Q$ is the projection onto $\calbb^C$, the subspace of $\calbb$ of $C$-invariant functions on $X$.
For $n\in \N$, we set $w_n=Q\chi_{A_n}$.
By Lemma \ref{lem-q}, taking a subsequence of $(A_n)_n$ if necessary, we may assume that for any $n\in \N$, we have the inequality
\begin{equation}\label{aw}
\Vert \chi_{A_n}-w_n\Vert <1/n^2.
\end{equation}

Fix $n\in \N$.
For a.e.\ $x\in X$, we have $0\leq w_n(x)\leq 1$.
We divide $X$ into the three subsets, $X=D_1\sqcup D_2\sqcup B_n$, by setting
\[D_1=\left\{ w_n<1/n \right\},\quad D_2=\left\{ 1/n\leq w_n\leq 1-(1/n) \right\} \quad \textrm{and}\quad B_n=\left\{ w_n>1-(1/n)\right\}.\]
On $D_2$, the inequality $|\chi_{A_n}-w_n|\geq 1/n$ holds, and thus $\mu(D_2)/n\leq \Vert \chi_{A_n}-w_n\Vert <1/n^2$.
The inequality
\begin{equation}\label{wb}
\Vert w_n-\chi_{B_n}\Vert =\int_{X\setminus D_2}|w_n-\chi_{B_n}|\, d\mu +\int_{D_2}|w_n|\, d\mu \leq (1/n)+\mu(D_2)<2/n
\end{equation}
holds.
Since $w_n$ is a $C$-invariant function on $X$, the set $B_n$ is $C$-invariant.
We define $\bar{B}_n$ as the measurable subset of $Z$ with $\theta^{-1}(\bar{B}_n)=B_n$.

\begin{claim}\label{claim-q-ai}
The following assertions hold:
\begin{enumerate}
\item The sequence $(B_n)_{n=1}^\infty$ is a.i.\ for $\sigma$.
The sequence $(\bar{B}_n)_{n=1}^\infty$ is therefore a.i.\ for $\tau$.
\item The measure $\xi(\bar{V}_n^0\bar{B}_n\bigtriangleup \bar{B}_n)$ does not converge to 0 as $n\to \infty$.
\end{enumerate}
\end{claim}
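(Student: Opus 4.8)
The plan is to prove both assertions by pushing the relevant sets down to $Z$ along the factor map $\theta$, the single quantitative input being the estimate $\mu(A_n\bigtriangleup B_n)\to 0$. To obtain the latter I would combine inequalities (\ref{aw}) and (\ref{wb}): since $\mu(A_n\bigtriangleup B_n)=\Vert \chi_{A_n}-\chi_{B_n}\Vert$, the triangle inequality gives $\mu(A_n\bigtriangleup B_n)\le \Vert \chi_{A_n}-w_n\Vert +\Vert w_n-\chi_{B_n}\Vert <1/n^2+2/n$, which tends to $0$.

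For assertion (i) I would first check that $(B_n)_n$ is a.i.\ for $\sigma$. For $g\in G$, using that $\sigma$ preserves $\mu$ and that $(A_n)_n$ is a.i.\ for $\sigma$,
\[\mu(gB_n\bigtriangleup B_n)\le \mu(gB_n\bigtriangleup gA_n)+\mu(gA_n\bigtriangleup A_n)+\mu(A_n\bigtriangleup B_n)=2\mu(A_n\bigtriangleup B_n)+\mu(gA_n\bigtriangleup A_n),\]
and both summands tend to $0$. To transfer this to $(\bar B_n)_n$, I would use that $\tau$ is induced from $\sigma$ through $\theta$: for $\gamma\in\Gamma$ and any $g\in G$ with $\rho(g)=\gamma$ one has $\theta(gx)=\gamma\theta(x)$ by centrality of $C$, hence $\theta^{-1}(\gamma\bar B_n)=gB_n$. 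Since $\xi=\theta_*\mu$, this yields $\xi(\gamma\bar B_n\bigtriangleup\bar B_n)=\mu(gB_n\bigtriangleup B_n)\to 0$, so $(\bar B_n)_n$ is a.i.\ for $\tau$.

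For assertion (ii) the first step is the identity $\xi(\bar V_n^0\bar B_n\bigtriangleup\bar B_n)=\mu(V_n^0B_n\bigtriangleup B_n)$, which follows from the intertwining $\bar V_n^0\circ\theta=\theta\circ V_n^0$ (a direct consequence of $\bar V_n(\theta(x))=\rho(V_n(x))$ and centrality of $C$) together with $\theta^{-1}(\bar B_n)=B_n$. It then suffices to prove $\mu(V_n^0B_n\bigtriangleup B_n)\not\to 0$. Here I would exploit condition (3), which gives $\mu(U_n^0A_n\bigtriangleup A_n)=\mu(X)=1$. Setting $S_n=\{\, x\in X\mid U_n^0x\neq V_n^0x\,\}$, inequality (\ref{16}) yields $\mu(S_n)<3/n$, because $U_nx=V_nx$ forces $U_n^0x=V_n^0x$; consequently $\mu(U_n^0B_n\bigtriangleup V_n^0B_n)\le 2\mu(S_n)<6/n$. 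Feeding these into the chain $U_n^0A_n\to U_n^0B_n\to V_n^0B_n\to B_n\to A_n$ and using that $U_n^0$ is measure-preserving,
\[1=\mu(U_n^0A_n\bigtriangleup A_n)\le 2\mu(A_n\bigtriangleup B_n)+\frac{6}{\,n\,}+\mu(V_n^0B_n\bigtriangleup B_n),\]
so the lower bound forces $\mu(V_n^0B_n\bigtriangleup B_n)\to 1$, and assertion (ii) follows.

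The triangle-inequality estimates are routine; the step demanding the most care is the bookkeeping between $X$ and $Z$ through $\theta$, specifically verifying the transfer identities $\theta^{-1}(\gamma\bar B_n)=gB_n$ and $\theta^{-1}(\bar V_n^0\bar B_n)=V_n^0B_n$ and the intertwining $\bar V_n^0\circ\theta=\theta\circ V_n^0$, all of which rest on centrality of $C$ and on the construction of the induced action $\tau$.
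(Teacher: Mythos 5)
Your proposal is correct and follows essentially the same route as the paper: the key estimate $\mu(A_n\bigtriangleup B_n)<1/n^2+2/n$ from inequalities (\ref{aw}) and (\ref{wb}), the comparison of $U_n^0$ and $V_n^0$ via inequality (\ref{16}) costing $6/n$, and condition (3) giving $\mu(U_n^0A_n\bigtriangleup A_n)=1$, assembled by the same triangle-inequality chain. The only difference is that you spell out the transfer identities between $X$ and $Z$ (the intertwining $\bar{V}_n^0\circ\theta=\theta\circ V_n^0$ and $\theta^{-1}(\gamma\bar{B}_n)=gB_n$), which the paper leaves implicit.
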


\begin{proof}
By inequalities (\ref{aw}) and (\ref{wb}), for any $n\in \N$, we have the inequality
\begin{equation}\label{ab}
\mu(A_n\bigtriangleup B_n)=\Vert \chi_{A_n}-\chi_{B_n}\Vert \leq \Vert \chi_{A_n}-w_n\Vert +\Vert w_n-\chi_{B_n}\Vert < (1/n^2)+(2/n).
\end{equation}
The sequence $(A_n)_n$ is a.i.\ for $\sigma$, and assertion (i) therefore follows.
By inequalities (\ref{16}) and (\ref{ab}), we have the inequality
\begin{align*}
\xi(\bar{V}_n^0\bar{B}_n\bigtriangleup \bar{B}_n) & =\mu(V_n^0B_n\bigtriangleup B_n)> \mu(U_n^0B_n\bigtriangleup B_n)-(6/n)\\
& > \mu(U_n^0A_n\bigtriangleup A_n)-(2/n^2)-(10/n)=1-(2/n^2)-(10/n),
\end{align*}
where the last equation holds by condition (3).
Assertion (ii) follows.
\end{proof}

It follows from Claims \ref{claim-q-ac} and \ref{claim-q-ai} that $(\bar{V}_n)_{n=1}^\infty$ is a non-trivial a.c.\ sequence for $\tau$.
By Theorem \ref{thm-ac}, $\Gamma$ is stable.
\end{proof}




\end{document}